\documentclass[11pt,a4paper]{article}

\usepackage[utf8]{inputenc}
\usepackage{color}
\usepackage{wrapfig}
\usepackage{placeins}
\usepackage{subfigure}
\usepackage{tabu}
\usepackage{fullpage}
\usepackage{graphicx}
\usepackage{tikz}
\usepackage{pgfplots}
\usepackage{epstopdf}
\usepackage{epsfig}
\usepackage[colorlinks=true,bookmarks=false,linkcolor=blue,urlcolor=blue,citecolor=blue,breaklinks=true]{hyperref}
\usepackage{tikz}
\usepackage{natbib}
\usepackage{tikz-qtree}
\usepackage{eurosym}
\usepackage{amsmath}
\numberwithin{equation}{section}
\usepackage{amssymb}
\usepackage{amsthm}
\usepackage{mathrsfs}
\usepackage{blkarray}
\usepackage{dsfont}% use $\mathds{1}$

\newcommand{\sigmamin}{\sigma_{\operatorname{min}}}

\newcommand{\D}{\mathrm{D}}

\newcommand{\N}{\mathbb{N}}

\newcommand{\R}{\mathbb{R}}
\newcommand{\T}{\mathrm{T}}

\newcommand{\inner}[2]{\left\langle{#1},{#2}\right\rangle}

\newcommand{\M}{\mathcal{M}}

\newcommand{\calO}{\mathcal{O}}

\newcommand{\grad}{\mathrm{grad}}
\newcommand{\Hess}{\mathrm{Hess}}
\newcommand{\hess}{\nabla^2}
\newcommand{\dist}{\mathrm{dist}}

\newcommand{\Rn}{{\mathbb{R}^n}}

\newcommand{\Rnn}{{\mathbb{R}^{n \times n}}}

\newcommand{\lambdamin}{\lambda_\mathrm{min}}

\newcommand{\Id}{\operatorname{Id}} % need to be distinguishable from identity matrix

\usepackage{pifont}

\usepackage{relsize}
\usepackage{rotating}

\usepackage{bm}

\DeclareMathOperator*{\argmin}{arg\,min}

\usepackage{algorithm}%
\usepackage{algpseudocode}
\usepackage{todonotes}
\usepackage{makecell}

\newcommand{\norm}[1]{\left\|#1\right\|}

\newtheorem{theorem}			     {Theorem}	[section]
\newtheorem{proposition}[theorem]	 {Proposition}
\newtheorem{corollary}	  [theorem]	 {Corollary}
\newtheorem{lemma}	      [theorem]  {Lemma}
\newtheorem{definition}	        	 {Definition}[section]
\newtheorem{assumption} {A\ignorespaces}
\newtheorem{example}		     {Example}[section]

\newtheorem{remark}			  {Remark}	[section]

%\newtheorem{contreexemple}[exemple]{Contre-exemple}
%\newtheorem{probleme}	             {Probl\`eme}[section]

%\usepackage{listings}
%\usepackage{textcomp}
%\definecolor{listinggray}{gray}{0.9}
%\definecolor{lbcolor}{rgb}{0.9,0.9,0.9}
%\lstset{
%	backgroundcolor=\color{lbcolor},
%	tabsize=4,
%	rulecolor=,
%	language=matlab,
%        basicstyle=\scriptsize,
%        upquote=true,
%        aboveskip={1.5\baselineskip},
%        columns=fixed,
%        showstringspaces=false,
%        extendedchars=true,
%        breaklines=true,
%        prebreak = \raisebox{0ex}[0ex][0ex]{\ensuremath{\hookleftarrow}},
%        frame=single,
%        showtabs=false,
%        showspaces=false,
%        showstringspaces=false,
%        identifierstyle=\ttfamily,
%        keywordstyle=\color[rgb]{0,0,1},
%        commentstyle=\color[rgb]{0.133,0.545,0.133},
%        stringstyle=\color[rgb]{0.627,0.126,0.941},
%}

\usepackage{mathtools}
\mathtoolsset{showonlyrefs=false}

\newcommand{\aref}[1]{\hyperref[#1]{A\ref{#1}}}

\usepackage{authblk}
\usepackage{tcolorbox}
\usepackage{comment}
\title{Riemannian trust-region methods for strict saddle functions with complexity guarantees}
\author{Florentin Goyens\thanks{\textsc{LAMSADE}, \textsc{CNRS}, Universit\'e Paris Dauphine-PSL, Place du Mar\'echal de Lattre de Tassigny, 75016 Paris, France. Emails: \url{goyensflorentin@gmail.com} (corresponding author), ~\url{clement.royer@lamsade.dauphine.fr}.} ~and Cl\'ement W. Royer$^*$}

%\author[1,*]{Florentin Goyens}
%\author[1,$\dagger$]{Cl\'ement W. Royer}
%\affil[1]{\textsc{LAMSADE}, \textsc{CNRS}, Universit\'e Paris Dauphine-PSL, Place du Mar\'echal de Lattre de Tassigny, 75016 Paris, France.}
%\affil[*]{\url{goyensflorentin@gmail.com} (corresponding author)}
%\affil[$\dagger$]{\url{clement.royer@lamsade.dauphine.fr}}
%\affil[ ]{Cite this version on \url{ https://hal.science/hal-04397931}}

\date{\today}

\newcommand{\kplus}{_{k+1}}
\newcommand{\inv}{^{-1}}
\newcommand{\onehalf}{\dfrac{1}{2}}
\newcommand{\calS}{\mathcal{S}}
\newcommand{\varepsilong}{\varepsilon_g}
\newcommand{\varepsilonH}{\varepsilon_H}
\newcommand{\Deltamin}{\Delta_\mathrm{min}}
\newcommand{\Deltabar}{\bar\Delta}
\newcommand{\TxM}{\mathrm{T}_x \mathcal{M}}

\newcommand{\ualpha}{\underline{\alpha}}
\newcommand{\ubeta}{\underline{\beta}}
\newcommand{\ugamma}{\underline{\gamma}}
\newcommand{\udelta}{\underline{\delta}}

\newcommand{\cR}{\kappa_R}
\newcommand{\cns}{\nu_S}
\newcommand{\cdxRs}{\kappa_S}
\newcommand{\cH}{\kappa_H}
\newcommand{\lmax}{\cH}
\newcommand{\cDelta}{c_{\Delta}}
\newcommand{\cquad}{c_Q}

\newcommand{\cDeltain}{\tilde{c}_{\Delta}}
\newcommand{\cquadin}{\tilde{c}_Q}
\newcommand{\Deltaminin}{\widetilde{\Delta}_{\min}}
\newcommand{\Cinex}{\tilde{C}}
\newcommand{\calSin}{\tilde{\calS}}

\newcommand{\smallstep}{\nu_R}

\newcommand{\lHhat}{\hat{L}_H}

\newcommand{\Ralpha}{\mathcal{R}_1}
\newcommand{\Rbeta}{\mathcal{R}_2}
\newcommand{\Rgamma}{\mathcal{R}_3}
\newcommand{\Ialpha}{\mathcal{S}_1}
\newcommand{\Ibeta}{\mathcal{S}_2}
\newcommand{\Igamma}{\mathcal{S}_3}
\newcommand{\kmax}{J_{\mathrm{CG}}}
\newcommand{\Nmeo}{J_{\mathrm{MEO}}}

\newcommand{\pullback}{\hat{f}_k}

%\DeclareUnicodeCharacter{2014}{\dash}
%\DeclareRobustCommand\dash{%
%  \unskip\nobreak\thinspace\textemdash\allowbreak\thinspace\ignorespaces}

\begin{document}
\maketitle

%\tableofcontents

\begin{abstract}
	The difficulty of minimizing a nonconvex function is in part explained by the presence of saddle points. This slows down optimization algorithms and impacts worst-case complexity guarantees. However, many nonconvex problems of 
	interest possess a favorable structure for optimization, in the sense that saddle points can be escaped efficiently by appropriate algorithms. This \emph{strict saddle} property has been extensively 
	used in data science to derive good properties for first-order algorithms, such as convergence to 
	second-order critical points. However, the analysis and the design of second-order algorithms in the strict 
	saddle setting have received significantly less attention.
	
	In this paper, we consider second-order trust-region methods for a class of strict saddle functions 
	defined on Riemannian manifolds. These functions exhibit (geodesic) strong convexity around minimizers 
	and negative curvature at saddle points. We show that the standard trust-region method with exact 
	subproblem minimization finds an approximate local minimizer in a number of iterations that depends 
	logarithmically on the accuracy parameter, which significantly improves known results for general 
	nonconvex optimization. We also propose an inexact variant of the algorithm that explicitly leverages the 
	strict saddle property to compute the most appropriate step at every iteration. Our bounds for the inexact 
	variant also improve over the general nonconvex case, and illustrate the benefit of using strict saddle 
	properties within optimization algorithms.\\
	
	\textbf{Keywords:} Riemannian optimization, strict saddle function, second-order method, complexity guarantees. \\
	
	\textbf{MSC:} 49M05, 49M15, 65K05, 90C60.
\end{abstract}

%%%%%%%%%%%%%%%%%%%%%%%%%%%%%%%%%%%%%%%%%%%%%%%%%%%%%%%%%%%%%%%%%%%%%%%%%%%%%%%%%%%%%%%%%%%%%%%%%%%%%%%%%%%%%
\section{Introduction}
\label{sec:intro}
%%%%%%%%%%%%%%%%%%%%%%%%%%%%%%%%%%%%%%%%%%%%%%%%%%%%%%%%%%%%%%%%%%%%%%%%%%%%%%%%%%%%%%%%%%%%%%%%%%%%%%%%%%%%%

% Setup
We consider the optimization problem
\begin{equation}\label{eq:P}\tag{P}
\min_{x\in \M} f(x)
\end{equation}
where $\M$ is an $n$-dimensional Riemannian manifold, and $f\colon \M\to \R$ is twice continuously differentiable and nonconvex. 
A popular way to solve Problem~\eqref{eq:P} is to use Riemannian optimization techniques, that use
differential geometry to generalize unconstrained optimization methods to the Riemannian 
setting~\citep{absil2008,boumal2023}. Theoretical guarantees for such methods have historically focused 
on the behavior close to minimizers (local convergence). These results usually rely on the 
objective function being convex (or strongly convex) around minimizers, thereby enabling the 
derivation of local convergence rates~\citep{absil2008,nocedalNumericalOptimization2006}. 

Meanwhile, the past decade has seen a growing interest in global convergence results for nonconvex 
optimization, where one quantifies the rate of convergence towards a stationary point independently of the 
starting point~\citep{cartis2022evaluation}. These rates can be stated in the form of complexity 
results, which bound the number of iterations necessary to satisfy approximate first- or second-order necessary conditions for optimality. Second-order stationary points for Problem~\eqref{eq:P} have a zero Riemannian gradient and positive semidefinite Riemannian Hessian:
\begin{align}\label{eq:socp}
\grad f(x) &= 0 & \text{and} && \lambda_{\min}\left(\Hess f(x)\right) &\geq 0,
\end{align}
where $\lambdamin(\cdot)$ is the smallest eigenvalue of a symmetric operator.
% and $\grad f$ and $\Hess f$ refer to the Riemannian gradient and Riemannian Hessian of $f$ on $\M$. 
Given positive tolerances $(\varepsilon_g, \varepsilon_H)$, complexity results bound the cost of satisfying an approximate version of~\eqref{eq:socp}, given by
%Complexity guarantees apply to algorithms which target an approximate version of~\eqref{eq:socp},
% bound the number of iterations necessary to satisfy an approximate version of~\eqref{eq:socp},
%  which for positive tolerances $(\varepsilon_g, \varepsilon_H)$, is given by
\begin{align}\label{eq:target}
\norm{\grad f(x)} &\leq \varepsilon_g & \text{and} && \lambda_{\min}\left(\Hess f(x)\right) &\geq - \varepsilon_H.
\end{align}
In the unconstrained or Euclidean setting (i.e., when $\M=\R^n$), it is well established that classical 
second-order trust-region methods~\citep{conn2000trust} reach an iterate satisfying~\eqref{eq:target} in at most 
$\calO(\max(\varepsilon_g^{-2}\varepsilon_H^{-1},\varepsilon_H^{-3}))$ iterations~\citep{cartisSecondOrder2012}. 
Although this complexity can be improved to $\calO(\max(\varepsilon_g^{-2},\varepsilon_H^{-3}))$ without 
changing the essence of the algorithm~\citep{curtisConcise2018,grattonDecoupled2020}, the resulting bound 
remains suboptimal among a large class of second-order methods~\citep{cartisWorstCase2019}. Indeed, 
techniques such as cubic regularization enjoy a 
$\calO(\max(\varepsilon_g^{-3/2},\varepsilon_H^{-3}))$ complexity bound, that strictly 
improves over standard trust-region methods and is optimal among the class of 
second-order algorithms. Similar bounds were obtained for the Riemannian counterparts of trust-region methods~\citep{boumal2019global} and cubic regularization~\citep{boumal2021cubic}. Modifications of the trust-region scheme have been proposed to achieve the optimal complexity of cubic regularization~\citep{curtisTrustRegionAlgorithm2017,
curtis2021trust}. 

These worst-case results are pessimistic in nature and do not reflect the good behaviour of second-order methods on many practical problems. In an effort to reconcile theoretical guarantees with practical performances, it becomes necessary to leverage additional structure from the function $f$. Numerous problems of the form~\eqref{eq:P} have the property that the nonconvexity is \emph{benign}, meaning that second-order critical points---Equation~\eqref{eq:socp}---are global minimizers~\citep{sun2015nonconvex,wright2022high}. Data analysis tasks with this property include Burer-Monteiro factorizations of semidefinite programs~\citep{boumal2020deterministic,luo2022nonconvex}, phase retrieval~\citep{sun2018geometric}, matrix completion and factorization~\citep{ge2016matrix,gongguo2019nonconvex}, dictionary learning~\citep{sun2017complete1,qu2019analysis} and others.

Benign nonconvexity implies that the Hessian possesses a negative eigenvalue at every saddle point. This \emph{strict saddle} property allows first- and second-order methods to provably avoid saddle points and converge towards minimizers. First-order methods escape strict saddle points almost 
surely~\citep{lee2019first}, and complexity bounds can even be derived for randomized first-order 
techniques, in both the Euclidean and Riemannian setting~\citep{criscitiello2019efficiently,sun2019escaping}. 
In addition, second-order methods, that leverage directions of negative curvature of the Hessian, escape 
strict saddle points by design, and are thus particularly suitable 
for strict saddle problems~\citep[Chapter 9]{wright2022high}.

Adaptations of complexity analysis to strict saddle problems have recently begun to appear in the literature. 
%The question of adapting complexity analyses to problems satisfying the strict saddle property has received limited attention in the literature.
On one hand, complexity results were established for specific instances 
satisfying a strict saddle property, such as phase retrieval~\citep{sun2018geometric} or dictionary 
learning~\citep{sun2017completeb}. More recently, \citet{oneill2023linesearch} considered low-rank matrix 
optimization problems under a strict saddle property, and designed a line-search method that made explicit use 
of the strict saddle structure. In these works, the analysis is tailored to specific problems, and its 
generalization to a broader strict saddle setting is not straightforward.

On the other hand, general analyses based on dividing the feasible set into regions of interest yielded 
complexity bounds that improved over the general nonconvex setting, in the 
sense that the dependencies with respect to $\varepsilon_g$ and $\varepsilon_H$ were only logarithmic rather 
than polynomial~\citep{paternain2019newton,curtis2021regional}. \citet{carmon2018accelerated} showed that an 
accelerated gradient technique tailored to nonconvex problems would enjoy improved complexity when applied to 
a function satisfying the strict saddle property. These general results apply to unconstrained strict saddle problems, and do not cover optimization problems on manifolds, a popular source of strict saddle 
problems~\citep{wright2022high}.

\subsection*{Contributions and outline}

In this work, we analyze a trust-region framework for minimizing strict saddle functions over Riemannian 
manifolds. The strict saddle problems we consider are strongly convex near minimizers, which leads to connections with Riemannian optimization of geodesically strongly convex functions. In particular, 
we leverage local convergence results for Newton's method in order to derive complexity results for 
our framework. We show that the standard trust-region method~\citep{absil2007trust} with exact subproblem minimization applied to a strict saddle function benefits from improved complexity guarantees compared to the general nonconvex setting. Indeed, our complexity bound possesses a logarithmic dependency in the optimality tolerances, 
thanks to the local quadratic convergence of the method, which improves over polynomial dependencies 
from the general case. We also derive similar results for an inexact version of our algorithm based on inexact 
solutions of the trust-region subproblem, that makes explicit use of the strict saddle structure. Our analysis builds on recent advances in the complexity of (Euclidean) trust-region methods by relying on iterative linear 
algebra routines. This yields complexity bounds in terms of iterations as well as Hessian-vector products.

 To the best of our knowledge, we provide the first strict saddle analysis of a generic second-order trust-region method, and the first strict saddle analysis that applies to a generic manifold $\M$. All our results apply naturally to the unconstrained case $\M=\Rn$. Overall, our results advocate for further use of the strict saddle structure in the design and analysis of 
nonconvex optimization methods. 

The rest of the paper is organized as follows. In Section~\ref{sec:background}, we describe the class of 
strict saddle functions on Riemannian manifolds that we investigate throughout the paper. This is prefaced by 
background material on Riemannian optimization and geodesic convexity. In Section~\ref{sec:exact_RTR}, we 
analyze the global complexity of the Riemannian trust-region with exact subproblem minimization. This is a 
well-known algorithm for which we show an improved complexity when applied to strict saddle functions. In 
Section~\ref{sec:inexact}, we design a new Riemannian trust-region method with inexact subproblem 
minimization that uses landscape parameters to compute directions which are appropriate for the local 
landscape. The guarantees for the inexact algorithm account for the cost of solving the subproblem. 

%%%%%%%%%%%%%%%%%%%%%%%%%%%%%%%%%%%%%%%%%%%%%%%%%%%%%%%%%%%%%%%%%%%%%%%%%%%%%%%%%%%%%%%%%%%%%%%%%%%%%%%%%%%%%
\section{Strict saddle functions on Riemannian manifolds}
\label{sec:background}
%%%%%%%%%%%%%%%%%%%%%%%%%%%%%%%%%%%%%%%%%%%%%%%%%%%%%%%%%%%%%%%%%%%%%%%%%%%%%%%%%%%%%%%%%%%%%%%%%%%%%%%%%%%%%

In this section, we define a class of strict saddle functions on Riemannian manifolds. We first present background material on Riemannian optimization in Section~\ref{ssec:manifolds},
with a focus on retractions. We then discuss the notion of geodesic strong convexity in
Section~\ref{ssec:geodesiccvx}, which plays a role in our definition of strict saddle functions. This
definition is provided along with several examples in Section~\ref{ssec:defstrict}. 

%%%%%%%%%%%%%%%%%%%%%%%%%%%%%%%%%%%%%%%%%%%%%%%%%%%%%%%%%%%%%%%%%%%%%%%%%%%%%%%%%%%%%%%%%%%%%%%%%%%%%%%%%%%%%
\subsection{Retractions and derivatives on Riemannian manifolds}
\label{ssec:manifolds}

Recall that problem~\eqref{eq:P} considers the minimization of a smooth function $f$ over a Riemannian
manifold $\M$. We cover the basic ideas that allow to build feasible algorithms for~\eqref{eq:P}.

At every $x \in \M$, the linear approximation of the manifold $\M$ is called the \emph{tangent space}, written $\T_x \M$. Each tangent space is equipped with an inner product $\inner{\cdot}{\cdot}_x$, which defines the norm of a tangent vector as $\norm{v}_x := \sqrt{\inner{v}{v}_x}$ for $v\in \TxM$. (We often write $\inner{\cdot}{\cdot}$ and $\norm{\cdot}$ when the reference point is clear from context.)
% Riemannian optimization algorithms generate a search direction in the form of a tangent vector, an element of the \emph{tangent space} $\T_x \M$. For a given $x$, the tangent space is equipped with an inner product $\inner{\cdot}{\cdot}_x$.
For smooth functions, the metric defines a
\emph{Riemannian gradient} and \emph{Riemannian Hessian} of $f$ at $x \in \M$, which we denote
by $\grad f(x) \in \T_x\M$ and $\Hess f(x)\colon \TxM\to \TxM$, respectively. By contrast, we use the symbols $\nabla$ and
$\hess$ for the gradient and Hessian of a function defined over a Euclidean space. 

Riemannian optimization algorithms use tangent vectors to generate search directions. Following a tangent direction in a straight line may lead outside the manifold, which is undesirable. Therefore, we need a tool to travel on the manifold in a direction prescribed by a tangent vector. This can be done by following the geodesic associated with a tangent vector. On manifolds, geodesics are curves with zero acceleration that generalize the notion of straight line in Euclidean spaces. Formally, a geodesic is a smooth curve
$c\colon I\to \M$ defined on an open interval $I \subset \R$ such that $c''(t)=0$ for all $t\in I$, where
$c''(t)$ is the intrinsic acceleration of $c$~\citep[Chapter 5]{boumal2023}. The exponential map travels along the manifold by following geodesics, but optimization algorithms commonly use first-order approximations of the exponential map, called retractions~\citep[\S 4.1]{absil2008}. A \emph{retraction} at $x$ is a map from the tangent space to the manifold, denoted by $R_x \colon \T_x\M \to \M$. For many manifolds of interest, practical and popular retractions are defined globally~\citep[Chapter 4]{absil2008}. However, the retraction at $x \in \M$ may only be defined locally, in a ball of radius $\varrho(x)>0$ centered around $0_{x}$ in $\T_{x}\M$. In that case the size of the step at $x\in \M$ must be limited to $\varrho(x)$. We discuss this further in Section~\ref{ssec:algoexact}.

Given a retraction, one can lift the function $f$ to the tangent space through the following
composition.
\begin{definition}%[Retraction and pullback]
\label{def:pullback}
	For any $x \in \M$, the \emph{pullback} of $f$ to the tangent space $\T_x \M$ is
	the function $\hat{f}_x \colon \T_x\M \to \R$ defined by
	\begin{equation*}
	\label{eq:pullback}
		 \hat{f}_x(s):= f \circ R_x(s) \ \text{for all } s \in \T_x \M.
	\end{equation*}
\end{definition}

In particular,
given $x \in \M$ and $s \in \T_x\M$, we consider the gradient of the pullback function
$\nabla \hat{f}_x(s) \in \T_x\M$ as well as its Hessian $\nabla^2 \hat{f}_x(s): \T_x\M \to \T_x\M$.
Note the distinction between these derivatives and the Riemannian derivatives of $f$
at $R_x(s)$, denoted by $\grad f(R_x(s))$ and $\Hess f(R_x(s))$. The identities
$\hat{f}_x(0)=f(x)$ and $\nabla \hat{f}_x(0)=\grad f(x)$ hold by
definition~\cite[Proposition 3.59]{boumal2023} and additional assumptions on
the retraction allow to relate the second-order derivatives. To benefit fully from second-order methods, we require that the retraction be a second-order approximation of geodesics.
\begin{assumption}
\label{assu:second_order_retraction}
	The retraction mapping is a \emph{second-order retraction}: for any
	$x \in \M$ and $s\in \T_x\M$, the curve $c: t \in [0,1] \to R_x(ts)$ has zero
	acceleration at $t=0$, that is, $c''(0)=0$.
\end{assumption}
If $R_x$ is a second-order retraction, it holds that
\begin{equation}
\label{eq:Hess2ndordR}
	\hess \hat{f}_x(0) = \Hess f(x) \qquad \forall x \in \M,
\end{equation}
i.e., the Hessian of the pullback function is the Riemannian Hessian of
$f$~\citep[Proposition 5.45]{boumal2023}.

\begin{remark}
\label{re:smoothpullback}
	In this paper, we choose to use a general retraction over the more restrictive exponential map. This requires certain
	smoothness assumptions on the pullback function (see~\aref{assu:hessian_lipschitz}), but has the advantage of resembling the Euclidean 
	setting. Using the exponential map typically leads to a different analysis that relies on parallel transport along geodesics, where the curvature of the manifold appears explicitly~\citep[Section 4]{sun2019escaping,criscitiello2019efficiently}.
\end{remark}

%%%%%%%%%%%%%%%%%%%%%%%%%%%%%%%%%%%%%%%%%%%%%%%%%%%%%%%%%%%%%%%%%%%%%%%%%%%%%%%%%%%%%%%%%%%%%%%%%%%%%%%%%%%%%
\subsection{Geodesic convexity}
\label{ssec:geodesiccvx}

We now provide the key definitions behind geodesic convexity, a concept that generalizes
convexity in Euclidean spaces to Riemannian manifolds. Geodesically convex sets
and functions are defined with respect to geodesics of $\M$ as follows.

%\begin{definition}
%On a Riemannian manifold $\M$ equipped with covariant derivative $\dfrac{\D}{\dt}$, a \emph{geodesic} of $\M$ is a smooth curve
%\emph{geodesic} of the manifold $\MM$ is a smooth curve $c\colon I\to \M$ defined on an open
%interval $I \subset \R$ such that $c''(t)=0$ for all $t\in I$, where $c''(t)$ is the intrinsic acceleration
%of $c$ defined by the covariant derivative
%\end{definition}
%A subset $S$ of a manifold $\M$ is geodesically convex if every pair of point in $S$ can be connected by a geodesic contained in $S$.

\begin{definition}
\label{def:geocvxset}
	A subset $S$ of $\M$ is geodesically convex if, for every $x,y\in S$,
	there exists a geodesic segment $c\colon [0,1] \to \M$ such that $c(0) = x,~c(1)=y$ and
	$c(t)$ is in $S$ for all $t\in [0,1]$.
\end{definition}
A function is geodesically convex on $S \subset \M$ if it is convex in the usual sense along all geodesics on $S$.
\begin{definition}
\label{def:geocvxfun}
	Given a subset $S$ of $\M$, the function $f\colon \M\to \R$ is geodesically convex on $S$ (resp. geodesically strongly
	convex) if	$S$ is geodesically convex and for every geodesic $c:[0,1] \to \M$ such that $c(0)\neq c(1)$
	and $c([0,1]) \subset S$, the function $f\circ c\colon [0,1]\to \R$ is convex (resp. strongly convex).
\end{definition}
For smooth functions, geodesic strong convexity is
determined by the eigenvalues of the Riemannian Hessian.
\begin{proposition}[Theorem 11.23 in~\citep{boumal2023}]
\label{prop:geostrcvxeigs}
	A function $f\colon \M\to \R$ is geodesically $\gamma$-strongly convex on the set $S\subset \M$
	if $S$ is a geodesically convex set and $\lambdamin \left(\Hess f(x)\right) \geq \gamma$ for every
	$x\in S$.
\end{proposition}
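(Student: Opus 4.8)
The plan is to reduce the manifold statement to a one-dimensional convexity statement along each geodesic, where the Hessian lower bound translates into a second-derivative lower bound for the scalar function $f \circ c$. First I would fix a geodesic $c\colon [0,1]\to\M$ with $c(0)\neq c(1)$ and $c([0,1])\subset S$, and set $g := f\circ c$. Since $S$ is geodesically convex by hypothesis, the domain condition in Definition~\ref{def:geocvxfun} is already satisfied, so the only thing to establish is that $g$ is $\tilde\gamma$-strongly convex on $[0,1]$ for a suitable $\tilde\gamma>0$ — equivalently, writing $L := \norm{c'(0)}$ for the (constant) speed of the geodesic, I would aim to show $g''(t) \geq \gamma L^2$ for all $t\in[0,1]$, which gives strong convexity of $g$ and hence, by Definition~\ref{def:geocvxfun}, geodesic $\gamma$-strong convexity of $f$ on $S$ in the appropriate normalization.

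The key computation is the second derivative of $g$ along the geodesic. Using the chain rule on manifolds, $g'(t) = \inner{\grad f(c(t))}{c'(t)}_{c(t)}$, and differentiating again while using that $c$ is a geodesic (so its intrinsic acceleration vanishes, $c''(t)=0$, killing the term involving $\grad f$ against $c''$), one gets $g''(t) = \inner{\Hess f(c(t))[c'(t)]}{c'(t)}_{c(t)}$. This is the standard second-order Taylor expansion of a smooth function restricted to a geodesic; I would cite the corresponding formula from~\citep{boumal2023} rather than rederive it. Once this identity is in hand, the hypothesis $\lambdamin(\Hess f(x)) \geq \gamma$ at $x = c(t)$ — together with the Rayleigh-quotient characterization of the smallest eigenvalue of a self-adjoint operator on the inner-product space $\T_{c(t)}\M$ — immediately yields $g''(t) \geq \gamma \norm{c'(t)}_{c(t)}^2$. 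Since a geodesic has constant speed, $\norm{c'(t)}_{c(t)} = \norm{c'(0)}_{c(0)}$ is independent of $t$ and strictly positive because $c(0)\neq c(1)$, so $g''$ is bounded below by a fixed positive constant on $[0,1]$, which is exactly strong convexity of $g$.

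I do not anticipate a genuine obstacle here: the statement is essentially a translation of Proposition~\ref{prop:geostrcvxeigs}'s hypotheses through the definition of geodesic strong convexity, and the only technical ingredient is the $g''$ formula, which is available off the shelf. The one point requiring a little care is bookkeeping about the strong-convexity modulus: the scalar function $g$ on $[0,1]$ inherits modulus $\gamma L^2$ where $L$ is the geodesic speed, whereas Definition~\ref{def:geocvxfun} asks only for \emph{some} positive modulus (or, in a more refined statement, for the modulus to match $\gamma$ under unit-speed parametrization). So I would either phrase the conclusion as ``$f$ is geodesically strongly convex on $S$'' without tracking the constant, or, if a sharp constant is wanted, restrict attention to unit-speed geodesics so that $L=1$ and the modulus is exactly $\gamma$. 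Finally I would note that the positivity $L>0$ uses precisely the assumption $c(0)\neq c(1)$ built into Definition~\ref{def:geocvxfun}, so no degenerate geodesics need to be handled separately.
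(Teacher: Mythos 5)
Your argument is correct: the identity $\frac{d^2}{dt^2} f(c(t)) = \inner{\Hess f(c(t))[c'(t)]}{c'(t)}_{c(t)}$ along a geodesic, the Rayleigh-quotient bound $\geq \gamma\norm{c'(t)}^2$, constant speed, and $c(0)\neq c(1)$ forcing positive speed are exactly the ingredients needed, and your remark about the modulus normalization (speed-squared factor versus unit-speed parametrization) is the right point of care given that Definition~\ref{def:geocvxfun} does not fix a constant. The paper offers no proof of this proposition---it is quoted directly from the cited reference (Theorem 11.23 in Boumal, 2023)---and your derivation is essentially the standard argument underlying that result, so there is nothing to reconcile.
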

Since we are interested in nonconvex problems, we consider functions that are geodesically strongly convex over a subset of the manifold (near minimizers). Functions with geodesic convexity over the entire manifold have also
been studied, with most interesting applications arising on Hadamard manifolds~\citep{zhang2016first}.

%%%%%%%%%%%%%%%%%%%%%%%%%%%%%%%%%%%%%%%%%%%%%%%%%%%%%%%%%%%%%%%%%%%%%%%%%%%%%%%%%%%%%%%%%%%%%%%%%%%%%%%%%%%%%
\subsection{Strict saddle property}
\label{ssec:defstrict}

We are now ready to define our problem class of interest, robust \emph{strict saddle} functions on $\M$. The definition is based on~\citep{ge2015escaping,sun2015nonconvex}.
\begin{definition}
\label{def:strict_saddle}
Let $f\colon \M\to \R$ be twice differentiable and let $\alpha,\beta,\gamma,\delta$ be positive constants.
The function $f$ is $(\alpha, \beta, \gamma,\delta)$\emph{-strict saddle} if the manifold
	$\M$ satisfies $\M=\Ralpha \cup \Rbeta \cup \Rgamma$, where
	\begin{align*}
		\Ralpha &= \{x\in \M : \norm{\grad f(x)} \geq \alpha\}\\
		\Rbeta &= \{x\in\M: \lambdamin\left(\Hess f(x)\right) \leq -\beta\}\\
		\Rgamma &= \left\{x\in \M: \text{ there exists } x^*\in \M \text{, a local minimizer of $f$ such that } \dist(x,x^*) \leq \delta \text{ and } \right.\\
		&~~~~~~~ \left. f \text{ is geodesically } \gamma\text{-strongly convex over the set }
		\{y\in \M\colon \mathrm{dist}(x^*,y) < 2\delta\}\right\}.
	\end{align*}
\end{definition}

Definition~\ref{def:strict_saddle} has the following interpretation. If $f$ is a strict saddle function
on $\M$, then, at any $x \in \M$, either the norm of the Riemannian gradient is sufficiently large,
the Riemannian Hessian has a sufficiently negative eigenvalue, or $x$ is close to a local
minimum of $f$ on $\M$ and $f$ is geodesically strongly convex in the neighborhood of this local
minimum. Note that the last two cases are mutually exclusive, but that the first case may occur simultaneously
with one of the other two.

\begin{remark}
	Other definitions of strict saddle functions exist in the literature, and the main differences appear in the definition of the region $\Rgamma$, where strong convexity is not always required~\citep{liu2023newton,oneill2023linesearch}. Our definition excludes non-isolated minimizers, where strong convexity cannot hold. Nevertheless, non-isolated minimizers
	can arise due to rotational symmetries in the problem~\citep{wright2022high}. We note that recent work has focused on reformulating such problems on a quotient set induced by the symmetry, leading to problems where minimizers are isolated~\citep{luo2022nonconvex}.	
%	
%	\orange{Under well-understood circumstances, it is possible to reformulate such problems on a \emph{quotient manifold} induced
%	by the symmetry, and this leads to problems where minimizers are isolated. For example, a quotient approach for Burer-Monteiro factorizations of certain
%	semidefinite programs satisfies Definition~\ref{def:strict_saddle}~\citep{luo2022nonconvex}.}
\end{remark}

We conclude this section with two simple examples of strict saddle functions in the sense of
Definition~\ref{def:strict_saddle}. Our first example is a strongly convex function over $\Rn$.

\begin{example}%[Strongly convex function over $\Rn$]
\label{ex:strcvx}
	Let $f:\R^n \to \R$ be a (geodesically) $\gamma$-strongly convex function
	with global minimizer $x^*$. For any $\alpha>0$, $f$ is
	$(\alpha,1,\gamma,\tfrac{2\alpha}{\gamma})$-strict saddle. The region $\Rbeta$ is empty, and for any $x\notin \Ralpha$ (i.e., $\|\nabla f(x)\| < \alpha$), we show that $x\in\Rgamma$. Strong convexity gives
	\[
		\tfrac{\gamma}{2}\|x-x^*\|^2 \le f(x)-f(x^*)\le -\nabla f(x)^\T (x-x^*) \le \|\nabla f(x)\|\|x-x^*\|
		\le \alpha\|x-x^*\|,
	\]
	hence $\|x-x^*\| \le \tfrac{2\alpha}{\gamma}=:\delta$. Clearly, $f$ is $\gamma$-strongly convex on $\{x : \|x-x^*\| < 2\delta\}\subset \Rn$.
\end{example}

Our second example, previously introduced in~\citep{sun2015nonconvex}, illustrates the interest
of the region $\Rbeta$ in the presence of nonconvexity.
\begin{example}
\label{ex:rayleigh}
	Let $\M$ be the unit sphere in $\R^n$, denoted by $\mathbb{S}^{n-1}$, and let
	$f\colon \mathbb{S}^{n-1} \to \R$ be defined by $f(x)=x^\T A x$, where $A \in \Rnn$ is a symmetric
	matrix with eigenvalues $\lambda_1 > \lambda_2 \ge \dots \ge \lambda_{n-1} > \lambda_n$.
	Then, there exists an absolute constant $c>0$ such that $f$
	is $\left( c(\lambda_{n-1}-\lambda_n)/\lambda_1,c(\lambda_{n-1}-\lambda_n),
	c(\lambda_{n-1}-\lambda_n),2c(\lambda_{n-1}-\lambda_n)/\lambda_1\right)$-strict saddle on $\mathbb{S}^{n-1}$.
\end{example}

To end this section, we state our key assumption about Problem~\eqref{eq:P}.
\begin{assumption}\label{assu:ss_R3}
	There exist positive constants $(\alpha,\beta,\gamma,\delta)$ such that the function $f$
	is $(\alpha,\beta,\gamma,\delta)$-strict saddle on the manifold $\M$ and
	$\Rgamma$ is a compact subset of $\M$.
\end{assumption}
The compactness assumption on $\Rgamma$ merely prevents the function from having infinitely many minimizers on $\M$, and is made to simplify the presentation. It is possible to extend our analysis to an unbounded region $\Rgamma$, but this lengthens the argument considerably. Note that Assumption~\ref{assu:ss_R3} holds for both examples above. The boundedness assumption also allows to control the distance between
iterates of our algorithms. 
\begin{lemma}[Lemma 6.32 in~\citep{boumal2023}]
\label{lemma:dist-retraction}
Under~\aref{assu:ss_R3}, there exists positive constants $\cns, \cdxRs$ such that for all $x\in\Rgamma$ and $s\in \T_x\M$, if
	$\norm{s}_x\leq \cns$, then $\dist(x,R_x(s)) \leq \cdxRs\norm{s}_x$, where $\dist(\cdot,\cdot)$ is
	the Riemannian distance on $\M$.
\end{lemma}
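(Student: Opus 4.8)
The plan is to bound $\dist(x,R_x(s))$ by the length of the retraction curve $t\mapsto R_x(ts)$, and then to control the resulting integrand by a continuity argument that exploits the compactness of $\Rgamma$.

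I would first record a uniform radius of definition for the retraction over $\Rgamma$. By definition of a retraction, $R_x$ is defined on a ball of radius $\varrho(x)>0$ around $0_x$, and, its domain being open in the tangent bundle, $\varrho$ may be taken lower semicontinuous; since $\Rgamma$ is compact by~\aref{assu:ss_R3}, there is therefore $\nu_0>0$ such that $R_x(s)$ is defined for every $x\in\Rgamma$ and every $s\in\TxM$ with $\norm{s}_x\le\nu_0$. Fix such a pair $(x,s)$. Since $\norm{ts}_x\le\norm{s}_x\le\nu_0$ for $t\in[0,1]$, the curve $c\colon[0,1]\to\M$, $c(t):=R_x(ts)$, is well defined and smooth, with $c(0)=x$ and $c(1)=R_x(s)$, so that $c'(t)=\mathrm{D}R_x(ts)[s]\in\T_{c(t)}\M$ (writing $\mathrm{D}R_x(v)$ for the differential of $R_x$ at $v$). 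As the Riemannian distance is the infimum of the lengths of curves in $\M$ joining its arguments,
\[
  \dist(x,R_x(s))\ \le\ \int_0^1 \norm{c'(t)}_{c(t)}\,\dt\ =\ \int_0^1 \norm{\mathrm{D}R_x(ts)[s]}_{R_x(ts)}\,\dt .
\]

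Next I would turn the integrand into something bounded uniformly in $x$. The inequality is trivial when $s=0$; otherwise put $u:=s/\norm{s}_x$, a unit tangent vector. Linearity of $\mathrm{D}R_x(ts)$ gives $\mathrm{D}R_x(ts)[s]=\norm{s}_x\,\mathrm{D}R_x(ts)[u]$, so the integrand equals $\norm{s}_x\,\psi(x,u,t\norm{s}_x)$, where $\psi(x,u,\tau):=\norm{\mathrm{D}R_x(\tau u)[u]}_{R_x(\tau u)}$ is defined and continuous whenever $\tau\le\nu_0$. In particular $\psi$ is continuous on
\[
  \Sigma\ :=\ \{(x,u,\tau):\ x\in\Rgamma,\ u\in\TxM,\ \norm{u}_x=1,\ 0\le\tau\le\nu_0\},
\]
which is compact, being the product of the unit-sphere bundle over the compact base $\Rgamma$ (itself compact) with $[0,\nu_0]$; hence $\psi$ attains a finite maximum $\cdxRs$ on $\Sigma$. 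Plugging this into the previous display yields $\dist(x,R_x(s))\le\cdxRs\norm{s}_x$ for all $x\in\Rgamma$ and all $s\in\TxM$ with $\norm{s}_x\le\cns:=\nu_0$, as claimed. (Since $\psi(x,u,0)=\norm{u}_x=1$, one could moreover shrink $\cns$ to force $\cdxRs$ arbitrarily close to $1$.)

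The content of the proof lies entirely in making the bound uniform: pointwise in $x$ everything is immediate (the retraction is defined near $0_x$, its differential there is the identity, and $R_x$ and the metric are continuous), and the only genuine step is to assemble these facts on the right compact sets --- the tube $\{(x,s):x\in\Rgamma,\ \norm{s}_x\le\nu_0\}$ for the uniform radius, and the unit-sphere bundle over $\Rgamma$ for the uniform constant --- so that $\cns$ and $\cdxRs$ can be chosen once and for all. This is precisely where the compactness hypothesis of~\aref{assu:ss_R3} enters.
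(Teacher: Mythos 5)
Your proof is correct. The paper does not prove this statement itself --- it is quoted directly from \citet[Lemma 6.32]{boumal2023} --- and your argument (bounding $\dist(x,R_x(s))$ by the length of the curve $t\mapsto R_x(ts)$, extracting a uniform radius $\cns$ from the openness of the retraction's domain in the tangent bundle together with compactness of $\Rgamma$, and taking $\cdxRs$ as the maximum of $\norm{\D R_x(\tau u)[u]}$ over the compact unit-sphere bundle over $\Rgamma$ times $[0,\cns]$) is essentially the standard proof of that cited result, so there is nothing to object to.
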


%%%%%%%%%%%%%%%%%%%%%%%%%%%%%%%%%%%%%%%%%%%%%%%%%%%%%%%%%%%%%%%%%%%%%%%%%%%%%%%%%%%%%%%%%%%%%%%%%%%%%%%%%%%%%
\section{Riemannian trust-region method with exact subproblem\\ minimization}
\label{sec:exact_RTR}
%%%%%%%%%%%%%%%%%%%%%%%%%%%%%%%%%%%%%%%%%%%%%%%%%%%%%%%%%%%%%%%%%%%%%%%%%%%%%%%%%%%%%%%%%%%%%%%%%%%%%%%%%%%%%

In this section we analyze the classical Riemannian trust-region algorithm (RTR) with exact subproblem minimization. Our goal is to leverage the strict saddle property to obtain better complexity bounds than those
existing for general nonconvex
functions~\citep{boumal2019global}. The algorithm is perfectly standard,
yet the analysis borrows from recent results on Newton-type methods for the Euclidean
setting~\citep{curtis2021trust}. In particular, our improved complexity bounds rely on a good understanding of the local convergence of the algorithm. Note that using the exponential map would simplify the analysis, as noticed for Riemannian cubic regularization~\citep{boumal2021cubic}.

Section~\ref{ssec:algoexact} describes the exact trust-region algorithm, along with key assumptions. Standard
decrease lemmas are provided in Section~\ref{ssec:lemmasexact}. A local convergence analysis of the algorithm
in the region of geodesic strong convexity is provided in Section~\ref{ssec:localcvexact}. This analysis allows to derive our global convergence result, that is established in
Section~\ref{ssec:wccexact}.

%%%%%%%%%%%%%%%%%%%%%%%%%%%%%%%%%%%%%%%%%%%%%%%%%%%%%%%%%%%%%%%%%%%%%%%%%%%%%%%%%%%%%%%%%%%%%%%%%%%%%%%%%%%%%
\subsection{Algorithm and assumptions}
\label{ssec:algoexact}

The Riemannian trust-region method with exact subproblem minimization is described in Algorithm~\ref{algo:exact_RTR}. At every iteration,
a step $s_k$ is computed by minimizing a quadratic model of (the pullback of) the function over the tangent space corresponding
to the current iterate $x_k$. In this section, we assume that the subproblem~\eqref{eq:rtr_subproblem_exact} is solved exactly using
standard approaches~\citep{more1983computing,absil2007trust} (the inexact case is addressed in
Section~\ref{sec:inexact}). The algorithm computes the step $s_k$, then evaluates $f$ at the point $R_{x_k}(s_k) \in \M$ to measure the change in function value. If the function decrease is at least a fraction of the model decrease, the iteration is successful, and the candidate point
$R_{x_k}(s_k)$ becomes the new iterate. The trust-region radius is either unchanged (successful
iteration) or can be increased (very successful iteration). If the iteration is unsuccessful, the algorithm remains at the current iterate and the trust-region radius is decreased.

\begin{algorithm}
\caption{RTR with exact subproblem minimization}\label{algo:exact_RTR}
\begin{algorithmic}[1]
	\State \textbf{Inputs:} Initial point $x_0\in \M$,
	initial and maximal trust-region radii $0<\Delta_0<\bar\Delta$,
	constants $0<\eta_1<\eta_2<1$ and $0<\tau_1<1<\tau_2$.
	\For{$ k = 1, 2, \dots $}
		\State Compute $s_k$ as a solution to the trust-region subproblem
		\begin{equation}\label{eq:rtr_subproblem_exact}
			s_k \in \argmin_{s\in \T_{x_k}\M} m_k(s) \text{ subject to } \norm{s}\leq \Delta_k,
		\end{equation}
		where $m_k$ is the model defined by~\eqref{eq:tr_model}.\\

		\State Compute $\rho_k = \dfrac{f(x_k) - f\left(R_{x_k}(s_k)\right)}{m_k(0) - m_k(s_k)}$ and
		set $x_{k+1} =
				\left\lbrace
					\begin{aligned}
					& R_k(x_k) & \text{ if } \rho_k \ge \eta_1\\
					& x_k & \text{ otherwise.}
					\end{aligned}
				\right.$\\
		\State Set $\Delta_{k+1} =
			\left\lbrace
				\begin{aligned}
					&\min\left(\tau_2\Delta_k,\bar \Delta\right) & \text{ if }   \rho_k >  \eta_2
					&&\text{ [very successful] }\\
					& \Delta_k & \text{ if }   \eta_2 \ge \rho_k \ge  \eta_1
					&&\text{ [successful] }\\
					& \tau_1\Delta_k & \text{ otherwise.}
					&&\text{ [unsuccessful] }
				\end{aligned}
			\right.$
%		\If{$\norm{\grad f(x_k)} \leq \varepsilon_g$
%		and $\lambdamin\left(\Hess f(x_k)\right) \geq - \varepsilon_H$}
%			\State \Return $x_k$.
%		\EndIf
	\EndFor
\end{algorithmic}
\end{algorithm}

The model is a second-order Taylor expansion of the
pullback function $\hat{f}_{x_k}$, namely
\begin{equation}\label{eq:tr_model}
 m_k \colon \T_x\M \to \R \colon s\mapsto  m_k(s) = f(x_k) + \inner{s}{g_k} + \dfrac{1}{2}\inner{s}{H_k s},
\end{equation}
where $g_k = \nabla \hat{f}_{x_k}(0)=\grad f(x_k)$ and $H_k = \hess \hat{f}_{x_k}(0)$, which gives a second-order accurate model. Although we do not consider it here, we belive that our analysis can be extended to approximate second-order accurate models, under the condition that $H_k$ is a suitable approximation of $\hess \pullback(0)$~\citep[Eq. (7.36)]{absil2008}.
When the retraction is second-order (\aref{assu:second_order_retraction}), we have $H_k = \Hess f(x_k)$ by~\eqref{eq:Hess2ndordR}.

In order to derive complexity results for Algorithm~\ref{algo:exact_RTR}, we make a standard Lipschitz-type
assumption on the Hessian of the pullback~\citep{boumal2019global}. Recall that if the retraction at $x_k$ is defined locally, we write $\varrho(x_k)>0$ for the radius of the ball centered around $0_{x_k}$ in $\T_{x_k}\M$ in which it is defined.

\begin{assumption}%[Restricted Lipschitz-type Hessian for pullbacks~\citep{boumal2019global}]
\label{assu:hessian_lipschitz}
	There exists $L_H>0$ such that for all iterates $x_k$ generated by Algorithm~\ref{algo:exact_RTR},
	the pullback $\hat f_k = f\circ R_{x_k}$ satisfies
	\begin{equation}
	\label{eq:hessLip}
		f(R_{x_k}(s))
		\leq f(x_k) + \inner{s}{ \grad f(x_k)} + \dfrac{1}{2} \inner{s}{\hess\pullback(0)[s]}
		+ \dfrac{L_H}{6}\norm{s}^3.
	\end{equation}
	for all $s \in \T_{x_k}\M$ such that $\norm{s} \leq  \varrho(x_k)$. We further assume $\Delta_k\leq \varrho(x_k)$, so that the property~\eqref{eq:hessLip} holds in the entire trust region produced by Algorithm~\ref{algo:exact_RTR}. 
\end{assumption}

A simple strategy to ensure $\varrho(x_k) \geq \Delta_k$ in the assumption above is to set $\bar \Delta $ below $\inf_{x\in \M\colon f(x) \leq f(x_0)} \varrho(x)$, which is positive if the injectivity radius of the manifold is positive~\citep[Remark 2.2]{boumal2019global}. Throughout we work implicitly under the assumption that this is satisfied. We additionally make the following assumption on the Hessian operators considered throughout the algorithm.
\begin{assumption}
\label{assu:bounded_hessian}
	There exists $\cH>0$ such that for all iterates $x_k$ generated by Algorithm~\ref{algo:exact_RTR},
	we have
	\begin{equation}
	\label{eq:boundHk}
 \norm{H_k}:= \sup_{\substack{s\in \T_{x_k}\M \\
 \norm{s}\leq 1}}	|\inner{s}{H_k(s)}| \leq \cH .
	\end{equation}
\end{assumption}

%%%%%%%%%%%%%%%%%%%%%%%%%%%%%%%%%%%%%%%%%%%%%%%%%%%%%%%%%%%%%%%%%%%%%%%%%%%%%%%%%%%%%%%%%%%%%%%%%%%%%%%%%%%%%
\subsection{Preliminary lemmas}
\label{ssec:lemmasexact}

In this section, using standard arguments from the theory of trust-region methods, we bound the model decrease in the regions $\Ralpha,\Rbeta,\Rgamma$ defined by the strict saddle property . We also provide a lower bound on the trust-region radius. 

Our first result handles the case of an iterate with large gradient norm (i.e., in $\Ralpha$).
\begin{lemma}%[Cauchy step decrease]
\label{lemma:cauchy_decrease}
	Under~\aref{assu:ss_R3} and~\aref{assu:bounded_hessian}, consider the $k$th iterate of
	Algorithm~\ref{algo:exact_RTR} and suppose that $x_k \in \Ralpha$.
	Then,
	\begin{equation}\label{eq:cauchy_decrease}
 		m_k(0) -  m_k(s_k) \geq \onehalf \min \left(\Delta_k, \dfrac{\alpha}{\cH}\right)\alpha.
	\end{equation}
\end{lemma}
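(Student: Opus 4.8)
The plan is to follow the classical Cauchy-point argument for trust-region methods, adapted to the pullback model $m_k$. Since $s_k$ solves~\eqref{eq:rtr_subproblem_exact} exactly, it achieves a model decrease at least as large as any feasible step; in particular, at least as large as the decrease along the steepest-descent direction $-g_k$ truncated to the trust region. So I would first restrict attention to steps of the form $s(t) = -t\, g_k/\norm{g_k}$ for $t \in [0,\Delta_k]$, and write out $m_k(0) - m_k(s(t)) = t\norm{g_k} - \tfrac{1}{2} t^2 \inner{g_k}{H_k g_k}/\norm{g_k}^2$.

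Next I would bound the quadratic term: by~\aref{assu:bounded_hessian}, $|\inner{g_k}{H_k g_k}|/\norm{g_k}^2 \le \cH$ (using~\eqref{eq:boundHk} with the unit vector $g_k/\norm{g_k}$), so $m_k(0) - m_k(s(t)) \ge t\norm{g_k} - \tfrac{1}{2}\cH t^2$. Then I would optimize this lower bound over $t \in [0,\Delta_k]$: the unconstrained optimum is at $t^* = \norm{g_k}/\cH$, giving value $\norm{g_k}^2/(2\cH)$; if $t^* > \Delta_k$, i.e. $\Delta_k < \norm{g_k}/\cH$, then the bound is monotone increasing on $[0,\Delta_k]$ and evaluating at $t = \Delta_k$ gives $\Delta_k\norm{g_k} - \tfrac12\cH\Delta_k^2 \ge \tfrac12 \Delta_k \norm{g_k}$. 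Combining the two cases yields $m_k(0) - m_k(s_k) \ge \tfrac{1}{2}\min(\Delta_k, \norm{g_k}/\cH)\norm{g_k}$. Finally, since $x_k \in \Ralpha$ means $\norm{g_k} = \norm{\grad f(x_k)} \ge \alpha$, and since $t \mapsto \tfrac12 \min(\Delta_k, t/\cH) t$ is nondecreasing in $t$, I can replace $\norm{g_k}$ by $\alpha$ to obtain~\eqref{eq:cauchy_decrease}.

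This argument is entirely routine and contains no real obstacle; the only point requiring a moment's care is the case split when the minimizing $t^*$ lands inside versus outside the interval $[0,\Delta_k]$, and the monotonicity step that lets one downgrade $\norm{g_k}$ to the threshold $\alpha$. One should also note that $g_k \neq 0$ on $\Ralpha$, so dividing by $\norm{g_k}$ is legitimate. Note that $g_k = \grad f(x_k)$ by definition of the model~\eqref{eq:tr_model}, so the Riemannian nature of the problem plays no role here beyond supplying the inner product on $\T_{x_k}\M$.
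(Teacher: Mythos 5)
Your proof is correct and follows essentially the same route as the paper: compare the exact minimizer $s_k$ against the Cauchy (steepest-descent) step, bound the curvature term via~\aref{assu:bounded_hessian}, and use $\norm{g_k}\ge\alpha$ together with monotonicity of $t\mapsto \tfrac12\min(\Delta_k,t/\cH)\,t$. The only difference is that you derive the standard Cauchy-decrease bound from scratch, whereas the paper simply invokes it as a known lemma (Lemma 6.15 in Boumal's book) applied to the Cauchy point.
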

\begin{proof}
	Define $s_k^C$ as the Cauchy point associated with the trust-region
	subproblem~\eqref{eq:rtr_subproblem_exact}, i.e.
	$s_k^C = -t g_k$ with $t = \argmin_{t \ge 0, \|t\,g_k\| \le \Delta_k} m_k(-t\,g_k)$.
	A straightforward application of~\citet[Lemma 6.15]{boumal2023} gives
	\begin{align*}
		m_k(0) -  m_k(s_k^C)
		&\geq \dfrac{1}{2}\min\left(\Delta_k, \norm{g_k}/\cH\right)\norm{g_k}\\
		&\geq \dfrac{1}{2} \min \left(\Delta_k, \alpha/\cH\right)\alpha.
	\end{align*}
	The desired result follows from the optimality of $s_k$, since
	$m_k(s_k) \le m_k(s_k^C)$.
\end{proof}

Our second result considers an iterate at which the Hessian possesses significant negative curvature
(i.e., in $\Rbeta$).

\begin{lemma}%[Eigenstep decrease]
\label{lemma:eigenstep_decrease}
	Under~\aref{assu:second_order_retraction} and~\aref{assu:ss_R3}, consider the $k$th iterate of
	Algorithm~\ref{algo:exact_RTR} and suppose that $x_k \in \Rbeta$. Then,
	\begin{equation}\label{eq:eigdecrease}
		m_k(0) -m_k(s_k) \geq \dfrac{1}{2} \beta \Delta_k^2 .
	\end{equation}
\end{lemma}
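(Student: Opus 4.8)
The plan is to exploit the eigenvector of $H_k = \Hess f(x_k)$ corresponding to its smallest eigenvalue, which by the assumption $x_k \in \Rbeta$ satisfies $\lambdamin(H_k) \leq -\beta$. First I would pick a unit tangent vector $v \in \T_{x_k}\M$ with $\inner{v}{H_k v} = \lambdamin(H_k) \leq -\beta$ (by definition of the smallest eigenvalue of a symmetric operator on the inner product space $\T_{x_k}\M$), and consider the trial step $s = \pm \Delta_k v$, with the sign chosen so that $\inner{s}{g_k} \leq 0$ — this is the classical ``eigenstep''. Plugging into the model~\eqref{eq:tr_model}, and using $H_k = \Hess f(x_k)$ which holds under~\aref{assu:second_order_retraction} via~\eqref{eq:Hess2ndordR}, one gets
\[
m_k(0) - m_k(s) = -\inner{s}{g_k} - \tfrac{1}{2}\inner{s}{H_k s} \geq -\tfrac{1}{2}\Delta_k^2 \inner{v}{H_k v} \geq \tfrac{1}{2}\beta\Delta_k^2,
\]
where the first inequality drops the nonnegative term $-\inner{s}{g_k}$ and the second uses $\inner{v}{H_k v} \leq -\beta$. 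Since $\|s\| = \Delta_k$, the step $s$ is feasible for the subproblem~\eqref{eq:rtr_subproblem_exact}, so by optimality of $s_k$ we have $m_k(s_k) \leq m_k(s)$, and therefore $m_k(0) - m_k(s_k) \geq m_k(0) - m_k(s) \geq \tfrac{1}{2}\beta\Delta_k^2$, which is exactly~\eqref{eq:eigdecrease}.

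The only subtlety worth checking carefully — and the main (though modest) obstacle — is the transition from the Riemannian Hessian $\Hess f(x_k)$ to the model operator $H_k$: the model~\eqref{eq:tr_model} is built from $H_k = \hess\hat f_{x_k}(0)$, and it is precisely~\aref{assu:second_order_retraction} together with identity~\eqref{eq:Hess2ndordR} that guarantees $H_k = \Hess f(x_k)$, so that the hypothesis $x_k \in \Rbeta$ (a statement about $\Hess f(x_k)$) can be used to lower-bound $-\inner{v}{H_k v}$. This is why the lemma invokes~\aref{assu:second_order_retraction}. Everything else is a one-line computation; no use of~\aref{assu:hessian_lipschitz} or~\aref{assu:bounded_hessian} is needed here, and in particular this bound holds regardless of the size of $\|g_k\|$ because the gradient term only helps. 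I would present the sign-choice step explicitly (e.g.\ ``replacing $v$ by $-v$ if necessary, assume $\inner{v}{g_k} \leq 0$'') so that the dropped term is manifestly nonnegative, and then conclude via feasibility and optimality of $s_k$ as above.
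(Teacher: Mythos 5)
Your proof is correct and follows essentially the same route as the paper: the paper also constructs the eigenstep $s_k^E = \Delta_k u_k$ with a unit vector $u_k$ satisfying $\inner{g_k}{u_k}\le 0$ and $\inner{u_k}{H_k u_k}\le -\beta$, citing \citep[Lemma 6.16]{boumal2023} for the decrease bound that you compute by hand, and then concludes by optimality of $s_k$. Your explicit handling of $H_k = \Hess f(x_k)$ via~\aref{assu:second_order_retraction} and~\eqref{eq:Hess2ndordR} is exactly the implicit reason the paper invokes that assumption.
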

\begin{proof}
	Define $s_k^E = \Delta_k u_k$, where $u_k \in \T_{x_k}\M$ satisfies
	\begin{align*}
		\norm{u_k}_{x_k}&=1, & \inner{g_k}{u_k}_{x_k} &\leq 0 &&\text{ and } &\inner{u_k}{H_k u_k}_{x_k} &\le - \beta.
	\end{align*}
	The vector $s_k^E$---called an eigenstep---exists
	because $x_k \in \Rbeta$. By~\citet[Lemma 6.16]{boumal2023}, it satisfies
	\[
		m_k(0) -m_k(s_k^E) \geq \dfrac{1}{2} \beta \Delta_k^2.
	\]
	The desired result follows from the optimality of $s_k$, as $m_k(s_k) \le m_k(s_k^E)$.
\end{proof}

Our last decrease lemma is based on the strong convexity constant, and proceeds similarly to the previous two results.

\begin{lemma}\label{lemma:convex_decrease}
	Under~\aref{assu:second_order_retraction} and~\aref{assu:ss_R3}, consider the $k$th iterate of
	Algorithm~\ref{algo:exact_RTR} and suppose that $x_k \in \Rgamma$. Then, the step $s_k$ is
	uniquely defined, and satisfies
	\begin{equation}\label{eq:convex_decrease}
		m_k(0) - m_k(s_k) \geq \dfrac{1}{2}\gamma\,\norm{s_k}^2 .
	\end{equation}
\end{lemma}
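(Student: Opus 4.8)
The plan is to exploit the observation that, for $x_k\in\Rgamma$, the model Hessian $H_k$ is positive definite with smallest eigenvalue at least $\gamma$, so that the trust-region subproblem~\eqref{eq:rtr_subproblem_exact} becomes the minimization of a strictly convex quadratic over a ball. Both the uniqueness claim and the decrease bound then follow from elementary properties of such a problem.

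First I would establish $\lambdamin(H_k)\geq\gamma$. By~\aref{assu:second_order_retraction} and~\eqref{eq:Hess2ndordR} we have $H_k=\Hess f(x_k)$. Since $x_k\in\Rgamma$, there is a local minimizer $x^*$ with $\dist(x_k,x^*)\leq\delta$ such that $f$ is geodesically $\gamma$-strongly convex on the open, geodesically convex set $S:=\{y\in\M:\dist(x^*,y)<2\delta\}$, which contains $x_k$ as an interior point. Because every tangent direction $u\in\TxM$ is the initial velocity of a geodesic staying in $S$ for small time, and because the intrinsic acceleration of a geodesic vanishes, geodesic $\gamma$-strong convexity of $f$ along such geodesics yields $\inner{u}{\Hess f(x_k)\,u}\geq\gamma\norm{u}^2$ for all $u\in\TxM$. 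This is the equivalence between geodesic strong convexity and Hessian lower bounds on open geodesically convex sets (Theorem~11.23 in~\citep{boumal2023}; Proposition~\ref{prop:geostrcvxeigs} records one direction). Hence $H_k\succeq\gamma\Id$ with $\gamma>0$, so $m_k$ is a strictly convex quadratic; since the feasible set $\{s\in\TxM:\norm{s}\leq\Delta_k\}$ is convex and compact, the minimizer $s_k$ of~\eqref{eq:rtr_subproblem_exact} exists and is unique.

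Next I would prove the decrease bound via the first-order optimality condition for the constrained minimizer. The origin lies in the feasible ball, so the segment $t\mapsto(1-t)s_k$, $t\in[0,1]$, is feasible, meaning $-s_k$ is a feasible direction at $s_k$; optimality then gives $\inner{\nabla m_k(s_k)}{-s_k}\geq 0$, i.e.\ $\inner{g_k+H_k s_k}{s_k}\leq 0$, equivalently $\inner{g_k}{s_k}\leq -\inner{s_k}{H_k s_k}$ (this is trivial when $s_k=0$). Substituting into $m_k(0)-m_k(s_k)=-\inner{s_k}{g_k}-\onehalf\inner{s_k}{H_k s_k}$ yields $m_k(0)-m_k(s_k)\geq\inner{s_k}{H_k s_k}-\onehalf\inner{s_k}{H_k s_k}=\onehalf\inner{s_k}{H_k s_k}\geq\onehalf\gamma\norm{s_k}^2$, which is~\eqref{eq:convex_decrease}.

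The only delicate point — hence the main obstacle — is the passage from geodesic $\gamma$-strong convexity of $f$ on the neighborhood $S$ of $x^*$ to the pointwise Hessian estimate $\lambdamin(\Hess f(x_k))\geq\gamma$, since Proposition~\ref{prop:geostrcvxeigs} is stated in the opposite direction; one must invoke the converse implication, which is legitimate precisely because $S$ is open and $x_k$ is an interior point. Everything else is a routine trust-region argument, entirely parallel to the proofs of Lemmas~\ref{lemma:cauchy_decrease} and~\ref{lemma:eigenstep_decrease}.
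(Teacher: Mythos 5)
Your proof is correct, but it reaches the key inequality by a more elementary route than the paper. The paper invokes the global optimality conditions for the exact trust-region subproblem (existence of a multiplier $\lambda_k\ge 0$ with $(H_k+\lambda_k\Id)s_k=-g_k$ and $H_k+\lambda_k\Id\succeq 0$, citing \citet[Proposition 7.3.1]{absil2008}), substitutes $-g_k=(H_k+\lambda_k\Id)s_k$ into $m_k(0)-m_k(s_k)$, and drops the $\lambda_k\norm{s_k}^2$ term; uniqueness then comes from positive definiteness of $H_k+\lambda_k\Id$, which it deduces from $\inner{s}{H_k s}\ge\gamma\norm{s}^2$ on $\Rgamma$. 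You instead use only the variational inequality $\inner{\nabla m_k(s_k)}{0-s_k}\ge 0$ available because $0$ lies in the convex feasible ball, which gives the same intermediate bound $m_k(0)-m_k(s_k)\ge\onehalf\inner{s_k}{H_k s_k}$ without any multiplier, and you get uniqueness directly from strict convexity of $m_k$ over the compact convex ball. What your version buys is independence from the (nontrivial) characterization of exact subproblem minimizers; what the paper's version buys is that its intermediate identity $m_k(0)-m_k(s_k)=\onehalf\inner{s_k}{H_k s_k}+\lambda_k\norm{s_k}^2$ and the conditions \eqref{eq:opti_1}--\eqref{eq:opti_4} are reused later (e.g.\ in Lemma~\ref{lemma:normskR3} and the Newton-step analysis), so the citation is not wasted. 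You are also more explicit than the paper on one point it states without comment: passing from geodesic $\gamma$-strong convexity of $f$ near $x^*$ to $\Hess f(x_k)\succeq\gamma\Id$ requires the converse direction of Proposition~\ref{prop:geostrcvxeigs}, which is legitimate since the ball $\{y:\dist(x^*,y)<2\delta\}$ is open and geodesically convex and contains $x_k$; making that explicit is a small improvement rather than a deviation.
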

\begin{proof}
	Since $s_k$ is a solution of the trust-region subproblem~\eqref{eq:rtr_subproblem_exact}, there
	exists $\lambda_k \ge 0$ such that the following optimality conditions hold~\citep[Proposition 7.3.1]{absil2008}:
	\begin{align}
		\left(H_k + \lambda_k\Id\right)s_k &= - g_k\label{eq:opti_1}\\
		\inner{s}{\left(H_k + \lambda_k\Id\right)[s]} &\ge 0 \quad \forall s \in \T_{x_k}\M \label{eq:opti_2}\\
		\norm{s_k} &\leq \Delta_k\\
		\lambda_k(\Delta_k-\norm{s_k}) &=0\label{eq:opti_4}.
	\end{align}
	Moreover, if the inequality in~\eqref{eq:opti_2} is strict for nonzero $s$, then the solution is
	unique. To establish a decrease guarantee for $s_k$, we combine~\eqref{eq:opti_1} and $\lambda_k\geq 0$ to obtain
	\begin{align*}
		m_k(0) - m_k(s_k)
		&= -\inner{s_k}{ g_k} - \dfrac{1}{2}\inner{s_k}{H_k s_k} \\
		&= \inner{s_k}{(H_k + \lambda_k\Id) s_k} - \dfrac{1}{2} \inner{s_k}{ H_k s_k }\\
		&= \dfrac{1}{2}\inner{s_k}{ H_k s_k} + \lambda_k \norm{s_k}^2\\
		&\geq \dfrac{1}{2}\inner{s_k}{ H_k s_k}\\
		&\geq \onehalf \gamma \norm{s_k}^2.
	\end{align*}
	The last line follows from $H_k = \Hess f(x_k)$ and $x_k \in \Rgamma$, i.e., $\inner{s}{H_k s} \ge \gamma \norm{s}^2$ for any $s \in \T_{x_k} \M$. This also implies that
	$H_k+\lambda_k\Id$ is positive definite, hence $s_k$ is uniquely
	defined.
%	 To conclude, we have
%	\[
%		m_k(0)-m_k(s_k) \ge  \dfrac{1}{2}\inner{s_k}{ H_k s_k} \ge \dfrac{\gamma}{2}\norm{s_k}^2.
%	\]
\end{proof}

The three lemmas above, together with the Lipschitz-type assumptions on the pullback,
yield a lower bound on the trust-region radius.
\begin{lemma}
\label{lemma:lower_bound_radius}
Let~\aref{assu:second_order_retraction},~\aref{assu:ss_R3},~\aref{assu:hessian_lipschitz} and \aref{assu:bounded_hessian} hold. Then, for any iteration
	of index $k$, the trust-region radius $\Delta_k$ in Algorithm~\ref{algo:exact_RTR} satisfies
	\begin{equation}\label{eq:lower_bound_radius}
		\Delta_k \geq \Deltamin := \cDelta \min\left(\Delta_0, \alpha^{1/2}, \alpha^{2/3}, \beta, \gamma\right),
	\end{equation}
	where $\cDelta = \min\left(1, \tau_1\sqrt{\dfrac{3(1-\eta_1)}{L_H}},\tau_1 \sqrt[3]{\dfrac{3(1-\eta_1)}{\cH L_H}},
	3\tau_1 \dfrac{(1-\eta_1)}{L_H }\right)$.
%where $\calpha = \tau_1\min \!\left(\dfrac{1}{\cH}, \dfrac{(1-\eta_1)}{L_g + \cH}\right)$ and $ \cbeta = \cgamma = 3\tau_1 \dfrac{(1-\eta_1)}{L_H } $.
\end{lemma}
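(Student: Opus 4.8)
The plan is to show that the trust-region radius cannot shrink indefinitely, because once it drops below an explicit threshold every iteration becomes (very) successful, and successful iterations do not decrease the radius. First I would record the preliminary observation that $\Delta_k \le \bar\Delta$ for all $k$ (true for $\Delta_0$ by hypothesis and preserved by the update rule), so that in the very successful case $\Delta_{k+1}=\min(\tau_2\Delta_k,\bar\Delta)\ge\Delta_k$, and in the successful case $\Delta_{k+1}=\Delta_k$; hence $\rho_k\ge\eta_1$ implies $\Delta_{k+1}\ge\Delta_k$. Next, using $m_k(0)=f(x_k)$, I would rewrite the ratio as
\[
	1-\rho_k=\frac{f(R_{x_k}(s_k))-m_k(s_k)}{m_k(0)-m_k(s_k)},
\]
and bound the numerator via \aref{assu:hessian_lipschitz} together with the identity $H_k=\hess\hat f_k(0)=\Hess f(x_k)$ from \aref{assu:second_order_retraction} and~\eqref{eq:Hess2ndordR}, which gives $f(R_{x_k}(s_k))-m_k(s_k)\le \tfrac{L_H}{6}\norm{s_k}^3\le \tfrac{L_H}{6}\Delta_k^3$ (the last step uses $\norm{s_k}\le\Delta_k\le\varrho(x_k)$, which is part of \aref{assu:hessian_lipschitz}).

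The heart of the argument is a case distinction according to the region of the strict saddle decomposition $\M=\Ralpha\cup\Rbeta\cup\Rgamma$ containing $x_k$. If $x_k\in\Ralpha$, Lemma~\ref{lemma:cauchy_decrease} bounds the denominator below by $\tfrac12\min(\Delta_k,\alpha/\cH)\alpha$; splitting according to whether $\Delta_k\le\alpha/\cH$ shows that $\Delta_k\le\min\!\big(\sqrt{3(1-\eta_1)/L_H}\,\alpha^{1/2},\ \sqrt[3]{3(1-\eta_1)/(\cH L_H)}\,\alpha^{2/3}\big)$ forces $1-\rho_k\le 1-\eta_1$, i.e. $\rho_k\ge\eta_1$. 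If $x_k\in\Rbeta$, Lemma~\ref{lemma:eigenstep_decrease} gives denominator $\ge\tfrac12\beta\Delta_k^2$, so $1-\rho_k\le L_H\Delta_k/(3\beta)$ and $\Delta_k\le 3(1-\eta_1)\beta/L_H$ suffices. If $x_k\in\Rgamma$ (and $s_k\ne 0$, the case $s_k=0$ meaning $x_k$ is already a local minimizer), Lemma~\ref{lemma:convex_decrease} gives denominator $\ge\tfrac12\gamma\norm{s_k}^2$; here I keep $\norm{s_k}^3$ in the numerator rather than upper bounding by $\Delta_k^3$, so the ratio is $\le L_H\norm{s_k}/(3\gamma)\le L_H\Delta_k/(3\gamma)$, and $\Delta_k\le 3(1-\eta_1)\gamma/L_H$ suffices. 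Letting $T$ be the minimum of these four thresholds, any iteration with $\Delta_k\le T$ is therefore (very) successful, so $\Delta_{k+1}\ge\Delta_k$.

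Finally I would run the standard induction: $\Delta_0\ge\min(\Delta_0,\tau_1 T)$, and if $\Delta_k\ge\min(\Delta_0,\tau_1 T)$ then either iteration $k$ is (very) successful, giving $\Delta_{k+1}\ge\Delta_k\ge\min(\Delta_0,\tau_1 T)$, or it is unsuccessful, which by the previous step forces $\Delta_k>T$ and hence $\Delta_{k+1}=\tau_1\Delta_k>\tau_1 T$; in both cases $\Delta_{k+1}\ge\min(\Delta_0,\tau_1 T)$. It then remains to check $\min(\Delta_0,\tau_1 T)\ge\cDelta\min(\Delta_0,\alpha^{1/2},\alpha^{2/3},\beta,\gamma)$, which is immediate: factoring the scalar coefficient out of each term in $\tau_1 T$ and bounding it below by $\cDelta=\min\!\big(1,\tau_1\sqrt{3(1-\eta_1)/L_H},\tau_1\sqrt[3]{3(1-\eta_1)/(\cH L_H)},3\tau_1(1-\eta_1)/L_H\big)$ yields the claim. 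The only mildly delicate points, rather than genuine obstacles, are (i) the $\min(\Delta_k,\alpha/\cH)$ factor in the $\Ralpha$ case, which is exactly what produces the two distinct powers $\alpha^{1/2}$ and $\alpha^{2/3}$ in $\Deltamin$, and (ii) retaining $\norm{s_k}$ instead of $\Delta_k$ in the $\Rgamma$ case, since the guaranteed model decrease there scales with $\norm{s_k}^2$, which may be far smaller than $\Delta_k^2$; the rest is bookkeeping of constants.
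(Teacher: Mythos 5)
Your proposal is correct and follows essentially the same argument as the paper: bound the numerator of $1-\rho_k$ by $\tfrac{L_H}{6}\norm{s_k}^3$ via \aref{assu:second_order_retraction} and \aref{assu:hessian_lipschitz}, lower-bound the denominator region by region using Lemmas~\ref{lemma:cauchy_decrease}, \ref{lemma:eigenstep_decrease} and \ref{lemma:convex_decrease} (keeping $\norm{s_k}$ rather than $\Delta_k$ in the $\Rgamma$ case, exactly as the paper does), and conclude with the standard induction on the update rule. The thresholds and the final constant $\cDelta$ match the paper's; your added remarks (monotonicity under success, the $s_k=0$ corner case) are just slightly more explicit bookkeeping.
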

\begin{proof}
	We begin by showing that if the trust-region radius drops below a certain threshold,
	then the iteration must be successful. We consider the quantity
	\begin{equation}
	\label{eq:1-rho}
		1-\rho_k = 1 - \dfrac{f(x_k) - f(R_{x_k}(s_k))}{m_k(0) - m_k(s_k)} = \dfrac{f(R_{x_k}(s_k))- m_k(s_k)}{m_k(0) - m_k(s_k)}
	\end{equation}
	for the three regions defined by the strict saddle property. First note that for any $x_k\in \M$, a second-order retraction~(\aref{assu:second_order_retraction})
	and Lipschitz continuity of the Hessian~(\aref{assu:hessian_lipschitz}) give the following bound on the numerator of~\eqref{eq:1-rho}:
	\begin{align}
		f(R_{x_k}(s_k))- m_k(s_k)
		&= f(R_{x_k}(s_k)) - f(x_k) - \inner{g_k}{s_k} - \dfrac{1}{2}\inner{s_k}{H_k s_k}\nonumber \\  \label{eq:fkplus1-model}
		&\le \dfrac{L_H}{6}\norm{s_k}^3 \leq \dfrac{L_H}{6}\Delta_k^3 .
	\end{align}
	For $x_k \in \Ralpha$, the denominator of~\eqref{eq:1-rho} satisfies~\eqref{eq:cauchy_decrease}.
	It follows that
	\begin{align*}
		1-\rho_k &\leq \dfrac{L_H \Delta_k^3}{3\min\left(\Delta_k\alpha,\alpha^2/\cH\right)} \leq \dfrac{L_H}{3}\max\left(\dfrac{\Delta_k^2}{\alpha}, \dfrac{\cH \Delta_k^3}{\alpha^2}\right).
	\end{align*}
	As a result, if $x_k \in \Ralpha$ and
	\begin{equation*}
		\Delta_k \leq \min \!\left(\sqrt{\dfrac{3(1-\eta_1)}{L_H}}\alpha^{1/2}, \sqrt[3]{\dfrac{3(1-\eta_1)}{\cH L_H}}\alpha^{2/3}\right),
	\end{equation*}
	then $1-\rho_k \le 1-\eta_1$ and iteration $k$ is successful.
	
	If $x_k\in \Rbeta$, the denominator of~\eqref{eq:1-rho} satisfies~\eqref{eq:eigdecrease}, which we combine with~\eqref{eq:fkplus1-model} to give
	\begin{align*}
		1-\rho_k
		&\leq 	\dfrac{L_H \Delta_k^3}{3\beta\Delta_k^2}
		= \dfrac{L_H }{3\beta}\Delta_k.
	\end{align*}
	Thus, if $x_k\in \Rbeta$ and $\Delta_k \leq 3(1-\eta_1)\beta/L_H$, we have $\rho_k \geq \eta_1$ and iteration $k$ is successful.

	Finally, for $x_k \in \Rgamma$, the upper bound~\eqref{eq:fkplus1-model} together with the model decrease~\eqref{eq:convex_decrease} gives
	\[
		1-\rho_k
		\le \dfrac{L_H \norm{s_k}^3}{3\gamma\norm{s_k}^2}
		\le \dfrac{L_H}{3\gamma}\Delta_k.
	\]
	As a result, if $x_k\in \Rgamma$ and $\Delta_k \leq 3(1-\eta_1)\gamma/L_H$, then $\rho_k \ge \eta_1$ and iteration $k$ is successful.

	Overall, we have shown that the iteration $k$ is successful as long as
	\[
		\Delta_k
		\le
		\min\!\left(\sqrt{\dfrac{3(1-\eta_1)}{L_H}}\alpha^{1/2}, \sqrt[3]{\dfrac{3(1-\eta_1)}{\cH L_H}}\alpha^{2/3},\dfrac{3(1-\eta_1)}{L_H}\beta,\dfrac{3(1-\eta_1)}{L_H}\gamma\right),
	\]
	in which case $\Delta_{k+1} \ge \Delta_k$. It follows from the updating rule on $\Delta_k$ that the trust-region
	radius is lower bounded for any $k \geq 0$:
		\begin{align*}
		\Delta_k &\geq 
		\min\!\left( \Delta_0, \tau_1\sqrt{\dfrac{3(1-\eta_1)}{L_H}}\alpha^{1/2}, \tau_1\sqrt[3]{\dfrac{3(1-\eta_1)}{\cH L_H}}\alpha^{2/3},\tau_1\dfrac{3(1-\eta_1)}{L_H}\beta,\tau_1\dfrac{3(1-\eta_1)}{L_H}\gamma\right)\\
		&\geq \cDelta\min\left(\Delta_0,\alpha^{1/2}, \alpha^{2/3},\beta,\gamma\right). \qedhere
		\end{align*}
\end{proof}

For $x_k\in \Ralpha \cup \Rbeta$, it is straightforward to combine Lemma~\ref{lemma:lower_bound_radius} with the results of Lemmas~\ref{lemma:cauchy_decrease}
and~\ref{lemma:eigenstep_decrease} to guarantee a model decrease that is independent of $k$. For $x_k \in \Rgamma$,
deriving a uniform lower bound on the decrease based on Lemma~\ref{lemma:convex_decrease} is more involved, and is the topic of the next section.

%%%%%%%%%%%%%%%%%%%%%%%%%%%%%%%%%%%%%%%%%%%%%%%%%%%%%%%%%%%%%%%%%%%%%%%%%%%%%%%%%%%%%%%%%%%%%%%%%%%%%%%%%%%%%
\subsection{Region of geodesic strong convexity and local convergence}
\label{ssec:localcvexact}

In this section, we analyze the behavior of Algorithm~\ref{algo:exact_RTR} in the region of strong convexity $\Rgamma$. The global subproblem minimizer is a regularized Newton step---Equation~\eqref{eq:opti_1}---and thus our approach mimics the study of Newton's method applied to
strongly convex functions~\citep{boyd2004}. For sufficiently large gradients, we provide a lower bound on the decrease
achieved by the step; and for small gradients, we show that a local convergence phase begins, during which the iterates converge
quadratically towards a local minimizer of~\eqref{eq:P}. To establish local convergence, we quantify how small the gradient norm needs to be so that the following occurs: the full Newton step is the solution of the subproblem, it is successful, it produces a new iterate that is also in $\Rgamma$, and the sequence of gradient norms converges quadratically towards zero.

Before stating those results, we establish several consequences of the boundedness assumption on $\Rgamma$~(\aref{assu:ss_R3}) that are
helpful in analyzing (regularized) Newton steps, starting with a Lipschitz-type inequality on the gradient of the
pullback.
\begin{lemma}\label{lemma:hessian_lipschtiz_grad_ineq}
	Under~\aref{assu:ss_R3}, there exists $\hat{L}_H>0$ such that for all iterates $x_k\in \Rgamma$ produced
	by Algorithm~\ref{algo:exact_RTR}, we have
	\begin{equation}
	\label{eq:hessgradlip}
		\norm{\nabla \hat f_{k}(s_k) - \nabla \hat f_{k}(0) - \hess \hat f_{k}(0)[s_k]} \leq \dfrac{\hat{L}_H}{2}\norm{s_k}^2.
	\end{equation}
\end{lemma}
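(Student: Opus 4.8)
\medskip
\noindent\textbf{Proof proposal.}
The plan is to obtain \eqref{eq:hessgradlip} from the classical integral form of Taylor's theorem applied to $\nabla\hat f_k$ along the segment $t\mapsto t s_k$, $t\in[0,1]$, the only nonstandard point being that the constant $\hat L_H$ must be chosen \emph{uniformly} over all admissible base points, even though the pullback $\hat f_x=f\circ R_x$ changes with $x$. Concretely, it suffices to establish the following: there is $\hat L_H>0$ such that for every $x\in\Rgamma$ the map $s\mapsto\hess\hat f_x(s)$ is $\hat L_H$-Lipschitz on the ball $\{s\in\T_x\M:\norm{s}_x\le\bar\Delta\}$. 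This ball contains $t s_k$ for all $t\in[0,1]$ because the trust-region constraint gives $\norm{s_k}\le\Delta_k\le\bar\Delta$, so \eqref{eq:hessgradlip} will follow.

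To produce such a uniform constant I would invoke the compactness of $\Rgamma$ granted by~\aref{assu:ss_R3}. Since Algorithm~\ref{algo:exact_RTR} is monotone in $f$, every iterate lies in the sublevel set $\{x\in\M:f(x)\le f(x_0)\}$, so it is enough to argue over the compact set $\calK:=\Rgamma\cap\{x\in\M:f(x)\le f(x_0)\}$. For each $x\in\calK$ the retraction is defined on a ball of radius $\varrho(x)\ge\bar\Delta$ (the standing assumption $\bar\Delta\le\inf_{x:f(x)\le f(x_0)}\varrho(x)$), so under the smoothness assumptions in force for the pullback (Remark~\ref{re:smoothpullback} and~\aref{assu:hessian_lipschitz}) the operator $\hess\hat f_x$ is locally Lipschitz, and its Lipschitz modulus on $\{\norm{s}_x\le\bar\Delta\}$ is controlled by the supremum over that ball of the relevant third-order quantity of $\hat f_x$. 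Regarding this quantity as a function of the pair $(x,s)$ on the compact subset $\{(x,s):x\in\calK,\ \norm{s}_x\le\bar\Delta\}$ of the tangent bundle $\T\M$, continuity yields boundedness, and we let $\hat L_H$ be twice that bound. This is the gradient/Hessian analogue of the function-value estimate of~\aref{assu:hessian_lipschitz}, now made uniform on the compact region $\Rgamma$, and it is the step where compactness is essential: over an unbounded $\Rgamma$ the constant could degrade as $x_k$ escapes to infinity.

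With the uniform Lipschitz property in hand, the conclusion is a routine computation. For $x_k\in\Rgamma$ write
\[
\nabla\hat f_k(s_k)-\nabla\hat f_k(0)-\hess\hat f_k(0)[s_k]=\int_0^1\bigl(\hess\hat f_k(ts_k)-\hess\hat f_k(0)\bigr)[s_k]\,\dt ,
\]
take norms, and use $\norm{\hess\hat f_k(ts_k)-\hess\hat f_k(0)}\le\hat L_H\,t\,\norm{s_k}$ to get $\norm{\nabla\hat f_k(s_k)-\nabla\hat f_k(0)-\hess\hat f_k(0)[s_k]}\le\hat L_H\norm{s_k}^2\int_0^1 t\,\dt=\tfrac{\hat L_H}{2}\norm{s_k}^2$. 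Thus the single genuine obstacle is the passage from a per-base-point Lipschitz estimate to a uniform one, which~\aref{assu:ss_R3} resolves; everything downstream is the standard second-order Taylor remainder bound.
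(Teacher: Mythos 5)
Your proposal is correct and follows essentially the same route as the paper: the paper's proof simply cites \citep[Lemma 10.57]{boumal2023}, which packages exactly your argument that compactness of $\Rgamma$ (from~\aref{assu:ss_R3}) together with the step bound $\norm{s_k}\le\Delta_k\le\bar\Delta$ yields a uniform Lipschitz modulus for the pullback Hessians, after which the Taylor integral remainder gives~\eqref{eq:hessgradlip}. The only difference is that you unpack the compactness/continuity argument that the cited lemma encapsulates.
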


\begin{proof}
	The constant $\hat{L}_H$ exists by continuity of the derivatives over $\Rgamma$ (a compact set) and boundedness of the	steps~\citep[Lemma 10.57]{boumal2023}. The compactness of $\Rgamma$ comes from~\aref{assu:ss_R3}, while
	the boundedness of the steps $s_k$ follows from $\norm{s_k} \le \Delta_k \le \Deltabar$.
\end{proof}
%Note that Lemma~\ref{lemma:hessian_lipschtiz_grad_ineq} is stated on the points of interest produced by
%the algorithm, similarly to the Lipschitz-type assumptions~\aref{assu:hessian_lipschitz}.

The next result describes a non-singularity condition for the differential of the retraction on small steps. This allows to relate the gradient of the pullback and the gradient at the next iterate. This property was introduced in~\citep{absil2007trust}, and further analyzed in the context of cubic regularization of Newton's method~\citep{boumal2021cubic}.

\begin{lemma}
\label{lemma:bound_grad_pullback}
	Let $\cR>1$, under~\aref{assu:ss_R3} there exists $\smallstep>0$ such that for any
	$x_k \in \Rgamma$ and $s\in \T_{x_k}\M$ with $\norm{s} \le \smallstep$,
	we have
	\begin{equation}\label{eq:cR}
		\norm{\grad f(R_x(s))}_{x_{k+1}} \leq \cR \norm{\nabla \hat{f}(s)}_{x_{k}}.
	\end{equation}
\end{lemma}
\begin{proof}
	We apply~\citep[Theorem 7]{boumal2021cubic} to $\Rgamma$ as a non-empty compact subset of $\M$. This
	ensures that for any $\cR>1$, there exists a constant $\smallstep>0$ such that, at each $x_k\in \Rgamma$,
	\begin{equation*}
		\norm{s} \le \smallstep \quad \Leftrightarrow \quad \sigmamin(\D R_{x_k}(s)) \geq \frac{1}{\cR},
	\end{equation*}
	where $\D R_{x_k}$ denotes the differential of $R_{x_k}$. The desired conclusion follows by combining
	this result with~\citep[Equation 22]{boumal2021cubic}, which states that
	\begin{equation*}
		\norm{\nabla \hat{f}_{x_k}(s)}
		\geq \sigmamin\left(\D R_{x_k}(s)\right) \norm{\grad f(R_{x_k}(s))}.
		\qedhere
	\end{equation*}
\end{proof}

Combining the previous two lemmas, we bound the change in gradient norm for Newton steps.
\begin{lemma}
\label{lemma:grad_next_step}
	Under~\aref{assu:second_order_retraction} and~\aref{assu:ss_R3}, let $x_k\in \Rgamma$ be
	an iterate produced by Algorithm~\ref{algo:exact_RTR} such that $s_k$ is the
	Newton step, i.e., $H_k s_k=-g_k$, and $\norm{s_k}\leq \smallstep$ where $\smallstep$ is the constant
	from Lemma~\ref{lemma:bound_grad_pullback}. Then, if the iteration is successful, we have
	\begin{equation}
	\label{eq:grad_next_step}
		\norm{\grad f(x_{k+1})}_{x_{k+1}} \leq \cR \dfrac{\hat L_H}{2}\norm{s_k}_{x_k}^2
	\end{equation}
	where $\hat{L}_H$ comes from Lemma~\ref{lemma:hessian_lipschtiz_grad_ineq}.
\end{lemma}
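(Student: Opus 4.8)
The plan is to chain together the three preceding lemmas. Since the iteration is assumed successful, we have $x_{k+1} = R_{x_k}(s_k)$, so the quantity to control is $\norm{\grad f(R_{x_k}(s_k))}_{x_{k+1}}$. The hypothesis $\norm{s_k} \le \smallstep$ is exactly what is needed to invoke Lemma~\ref{lemma:bound_grad_pullback}, giving
\[
	\norm{\grad f(R_{x_k}(s_k))}_{x_{k+1}} \leq \cR\, \norm{\nabla \hat f_{x_k}(s_k)}_{x_k}.
\]
So the remaining task is to bound the norm of the pullback gradient $\nabla \hat f_{x_k}(s_k)$.

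For that I would use Lemma~\ref{lemma:hessian_lipschtiz_grad_ineq}, which bounds the residual $\norm{\nabla \hat f_k(s_k) - \nabla \hat f_k(0) - \hess \hat f_k(0)[s_k]}$ by $\tfrac{\hat L_H}{2}\norm{s_k}^2$. The key observation is that this residual is in fact equal to $\nabla \hat f_k(s_k)$: indeed $\nabla \hat f_k(0) = g_k = \grad f(x_k)$ by Definition~\ref{def:pullback}, and $\hess \hat f_k(0) = H_k = \Hess f(x_k)$ by the second-order retraction assumption~\aref{assu:second_order_retraction} and~\eqref{eq:Hess2ndordR}; since $s_k$ is the Newton step we have $H_k s_k = -g_k$, so $\nabla \hat f_k(0) + \hess \hat f_k(0)[s_k] = g_k - g_k = 0$. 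Therefore $\norm{\nabla \hat f_k(s_k)}_{x_k} \le \tfrac{\hat L_H}{2}\norm{s_k}_{x_k}^2$.

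Substituting this into the first displayed inequality yields $\norm{\grad f(x_{k+1})}_{x_{k+1}} \le \cR \tfrac{\hat L_H}{2}\norm{s_k}_{x_k}^2$, which is the claim. There is no real obstacle here: the argument is a direct combination of Lemmas~\ref{lemma:hessian_lipschtiz_grad_ineq} and~\ref{lemma:bound_grad_pullback}, and the only thing to be careful about is the bookkeeping of reference points for the norms (the residual bound and the Newton-step identity live in $\T_{x_k}\M$, while the final gradient norm is measured in $\T_{x_{k+1}}\M$, with the transition handled precisely by Lemma~\ref{lemma:bound_grad_pullback}). One should also note that the validity of the identity $\hess \hat f_k(0) = H_k$ is where~\aref{assu:second_order_retraction} is used, explaining its appearance in the hypotheses.
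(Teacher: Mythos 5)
Your proposal is correct and follows essentially the same route as the paper: apply Lemma~\ref{lemma:bound_grad_pullback} (enabled by $\norm{s_k}\le\smallstep$ and the successful step giving $x_{k+1}=R_{x_k}(s_k)$), then use the Newton identity $H_k s_k=-g_k$ together with $\nabla\hat f_{x_k}(0)=g_k$ and $\hess\hat f_{x_k}(0)=H_k$ to recognize $\nabla\hat f_{x_k}(s_k)$ as the residual bounded by Lemma~\ref{lemma:hessian_lipschtiz_grad_ineq}. The paper writes this as adding and subtracting $\grad f(x_k)$ inside the norm, while you note directly that $\nabla\hat f_{x_k}(0)+\hess\hat f_{x_k}(0)[s_k]=0$; the two are the same computation.
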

\begin{proof}
	By assumption, $x_{k+1} = R_{x_k}(s_k)$. Using successively Lemma~\ref{lemma:bound_grad_pullback},~\aref{assu:second_order_retraction} and Lemma~\ref{lemma:hessian_lipschtiz_grad_ineq}, we obtain
	\begin{align*}
		\norm{\grad f(x_{k+1})}_{x_{k+1}} &\leq \cR \norm{\nabla \hat f_{x_k}(s_k)}_{x_k} \\
		&= \cR \norm{\nabla \hat f_{x_k}(s_k) - \grad f(x_k) + \grad f(x_k)}_{x_k}\\
		&= \cR \norm{\nabla \hat f_{x_k}(s_k) - \grad f(x_k) - H_k s_k}_{x_k}\\
		&= \cR \norm{\nabla \hat f_{x_k}(s_k) - \nabla \hat f_{x_k}(0)  - \hess \pullback(0) [s_k]  }_{x_k}  \\
		&\leq \cR \dfrac{\hat{L}_H}{2}\norm{s_k}_{x_k}^2. \qedhere
	\end{align*}
\end{proof}
%Note that if the retraction is the exponential map, one shows $\norm{\grad f(x_{k+1})}_{x_{k+1}} \leq \dfrac{L_H}{2}\norm{s_k}^2$ in a similar fashion, where $L_H$ is the Lipschitz constant from~\aref{assu:hessian_lipschitz}.

\begin{remark}
\label{rem:expmapconvexdec}
	Using the exponential map rather than a general retraction significantly simplifies the analysis 
	above. Indeed, with the exponential map, Lemma~\ref{lemma:hessian_lipschtiz_grad_ineq} can be replaced by
	\begin{equation}
	\label{eq:Exp_Lip_bound}
		\norm{P^{-1}_s \grad f(\mathrm{Exp}_x(s)) - \grad f(x) - \Hess f(x)[s]} \leq \dfrac{L_H}{2}\norm{s}^2
	\end{equation}
	where $P^{-1}_s$ is the parallel transport from $\T_{\mathrm{Exp}_x(s)}\M$
	to $\T_x\M$, and $L_H$ is the Lipschitz constant of $\Hess f$ from~\eqref{eq:hessLip}, see~\citep[Corollary 10.56]{boumal2023}.
	As a result, the proof of Lemma~\ref{lemma:grad_next_step} is also simplified, and no
	longer requires the decomposition in short and long steps induced by Lemma~\ref{lemma:bound_grad_pullback}.
\end{remark}

We are equipped to prove a decrease guarantee for Newton steps.

\begin{lemma}%[Newton step decrease]
\label{lemma:decrease_Newton}
	Under the assumptions of Lemma~\ref{lemma:grad_next_step}, we have
	\begin{equation*}
	\label{eq:decrease_Newton}
	m_k(0) - m_k(s_k) \geq \dfrac{\gamma}{\hat{L}_H\,\cR}  \norm{\grad f(x_{k+1})}_{x_{k+1}}.
	\end{equation*}
\end{lemma}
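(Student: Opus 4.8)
The plan is to chain two estimates already established in the preceding lemmas. First, I would invoke the convexity-based decrease: since $x_k\in\Rgamma$ and $s_k$ is the trust-region subproblem solution, Lemma~\ref{lemma:convex_decrease} gives $m_k(0)-m_k(s_k)\ge\onehalf\gamma\norm{s_k}^2$. Alternatively—and this keeps the proof self-contained—one can derive the same bound directly from the hypothesis $H_k s_k=-g_k$ together with $H_k=\Hess f(x_k)\succeq\gamma\Id$ on $\Rgamma$: indeed $m_k(0)-m_k(s_k)=-\inner{s_k}{g_k}-\onehalf\inner{s_k}{H_k s_k}=\inner{s_k}{H_k s_k}-\onehalf\inner{s_k}{H_k s_k}=\onehalf\inner{s_k}{H_k s_k}\ge\onehalf\gamma\norm{s_k}^2$.

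Second, I would use Lemma~\ref{lemma:grad_next_step}, whose hypotheses coincide with those assumed here ($x_k\in\Rgamma$, $s_k$ the Newton step, $\norm{s_k}\le\smallstep$, iteration successful). It yields $\norm{\grad f(x_{k+1})}_{x_{k+1}}\le\cR\tfrac{\lHhat}{2}\norm{s_k}_{x_k}^2$, equivalently $\norm{s_k}^2\ge\tfrac{2}{\cR\lHhat}\norm{\grad f(x_{k+1})}_{x_{k+1}}$. Substituting this lower bound on $\norm{s_k}^2$ into the decrease inequality from the first step gives $m_k(0)-m_k(s_k)\ge\onehalf\gamma\cdot\tfrac{2}{\cR\lHhat}\norm{\grad f(x_{k+1})}_{x_{k+1}}=\tfrac{\gamma}{\lHhat\,\cR}\norm{\grad f(x_{k+1})}_{x_{k+1}}$, which is exactly the claimed bound.

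Since the argument is purely a composition of two previously proved estimates, I do not anticipate a genuine obstacle. The only point meriting mild care is the applicability of Lemma~\ref{lemma:convex_decrease}: this is automatic because, under the stated hypotheses, the Newton step lies inside the trust region and $H_k$ is positive definite on $\Rgamma$, so $s_k$ is indeed the (unique) subproblem minimizer. If one wishes to bypass that lemma entirely, the identity displayed above using $H_k s_k=-g_k$ already supplies the required $\onehalf\gamma\norm{s_k}^2$ lower bound on the model decrease.
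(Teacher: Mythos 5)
Your proposal is correct and follows essentially the same route as the paper: the paper's proof combines the model decrease $m_k(0)-m_k(s_k)\ge\tfrac{\gamma}{2}\norm{s_k}^2$ from Lemma~\ref{lemma:convex_decrease} with the bound $\norm{\grad f(x_{k+1})}_{x_{k+1}}\le\cR\tfrac{\hat{L}_H}{2}\norm{s_k}^2$ from Lemma~\ref{lemma:grad_next_step}, exactly as you do. Your alternative direct derivation of the decrease via $H_k s_k=-g_k$ and $H_k\succeq\gamma\Id$ is a valid, slightly more self-contained way to obtain the same first inequality, but it does not change the substance of the argument.
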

\begin{proof}
	By combining~\eqref{eq:grad_next_step} with~\eqref{eq:convex_decrease}, we obtain
	\begin{align*}
	m_k(0) - m_k(s_k) &\geq \dfrac{\gamma}{2}\norm{s_k}_{x_k}^2\\
	&\geq  \dfrac{\gamma}{\hat{L}_H \cR}\norm{\grad f(x_{k+1})}_{x_{k+1}}.\qedhere
	\end{align*}
\end{proof}

We now turn to local convergence results. Our goal is to show that Newton steps are eventually
accepted by the algorithm, and that they produce iterates with decreasing gradient norm. We begin with a bound on the norm of the subproblem minimizer in $\Rgamma$.
\begin{lemma}
\label{lemma:normskR3}
	Suppose that Algorithm~\ref{algo:exact_RTR} produces an iterate $x_k \in \Rgamma$. Then,
	\begin{align}
	\label{eq:normskR3}
	\norm{s_k} \leq \dfrac{\norm{g_k}}{\gamma}.
	\end{align}
\end{lemma}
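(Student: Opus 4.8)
The plan is to read off the bound directly from the first-order optimality conditions for the trust-region subproblem, which were already recorded in the proof of Lemma~\ref{lemma:convex_decrease}. Recall that since $x_k\in\Rgamma$, the step $s_k$ is the unique global minimizer of~\eqref{eq:rtr_subproblem_exact} and there exists a Lagrange multiplier $\lambda_k\geq 0$ with
\[
	(H_k+\lambda_k\Id)s_k = -g_k,
\]
and moreover $H_k=\Hess f(x_k)$ satisfies $\inner{s}{H_k s}\geq \gamma\norm{s}^2$ for all $s\in\T_{x_k}\M$, by the defining property of $\Rgamma$ and~\aref{assu:second_order_retraction}.

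Next I would take the inner product of the identity $(H_k+\lambda_k\Id)s_k=-g_k$ with $s_k$ itself. On the one hand, using $\lambda_k\geq 0$ and the strong-convexity lower bound on $H_k$,
\[
	\inner{s_k}{(H_k+\lambda_k\Id)s_k} = \inner{s_k}{H_k s_k} + \lambda_k\norm{s_k}^2 \geq \gamma\norm{s_k}^2 .
\]
On the other hand, $\inner{s_k}{(H_k+\lambda_k\Id)s_k} = -\inner{s_k}{g_k} \leq \norm{s_k}\,\norm{g_k}$ by Cauchy--Schwarz. Combining the two inequalities gives $\gamma\norm{s_k}^2\leq \norm{s_k}\,\norm{g_k}$, and dividing through by $\norm{s_k}$ (the case $s_k=0$ being trivial since then $g_k=0$ as well) yields~\eqref{eq:normskR3}.

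Equivalently, one could note that $H_k+\lambda_k\Id \succeq \gamma\Id$ is positive definite, hence invertible with operator norm of its inverse at most $1/\gamma$, and conclude $\norm{s_k}=\norm{(H_k+\lambda_k\Id)^{-1}g_k}\leq \norm{g_k}/\gamma$. There is no real obstacle here: the only point requiring care is to make sure the argument uses the constant $\gamma$ (the geodesic strong convexity modulus on $\Rgamma$) rather than an arbitrary eigenvalue bound, and to invoke the already-established fact that the multiplier $\lambda_k$ is nonnegative so that it can only help the lower bound; the estimate is otherwise an immediate consequence of the subproblem optimality system.
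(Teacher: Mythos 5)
Your argument is correct and is essentially identical to the paper's proof: both use the subproblem optimality condition $(H_k+\lambda_k\Id)s_k=-g_k$ with $\lambda_k\ge 0$, the lower bound $\inner{s_k}{H_k s_k}\ge\gamma\norm{s_k}^2$ from the definition of $\Rgamma$, and Cauchy--Schwarz, then divide by $\norm{s_k}$ after dispatching the trivial case $s_k=0$. The alternative remark via $\opnorm{(H_k+\lambda_k\Id)^{-1}}\le 1/\gamma$ is a fine equivalent phrasing but adds nothing beyond the paper's route.
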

\begin{proof}
	The result holds trivially if $\norm{s_k}=0$. Otherwise, using the definition of $\Rgamma$
	together with $\lambda_k \ge 0$ and~\eqref{eq:opti_1}, we get
	\begin{align*}
		\gamma \norm{s_k}^2 &\leq \inner{s_k}{ H_k s_k} \leq \inner{s_k}{ (H_k + \lambda \Id) s_k}
		= - \inner{s_k}{g_k} \leq \norm{s_k}\norm{g_k},
	\end{align*}
	and division by $\norm{s_k}$ gives~\eqref{eq:normskR3}.
\end{proof}
Lemma~\ref{lemma:normskR3} is an elementary identity that we use throughout. We use it in the next proposition to show that iterations in $\Rgamma$ with a small enough gradient are successful. 

\begin{proposition}%[Successful steps]
\label{prop:very_successful_steps}
	Under~\aref{assu:second_order_retraction} and~\aref{assu:hessian_lipschitz}, suppose that
	Algorithm~\ref{algo:exact_RTR} generates $x_k\in \Rgamma$ such that
	\begin{equation}\label{eq:norm_grad_success}
		\norm{\grad f(x_{k})} < \dfrac{3(1-\eta_1)\gamma^2}{L_H}.
	\end{equation}
	Then, the $k$th iteration is successful.
\end{proposition}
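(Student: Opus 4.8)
The plan is to show that the ratio $\rho_k$ of achieved to predicted reduction satisfies $\rho_k\ge\eta_1$, which by the expression for $1-\rho_k$ in \eqref{eq:1-rho} amounts to proving $1-\rho_k<1-\eta_1$. Before doing so I would dispose of the degenerate case: in $\Rgamma$ we have $H_k=\Hess f(x_k)\succeq\gamma\Id\succ0$, so the regularized Newton equation~\eqref{eq:opti_1} forces $s_k=0$ only when $g_k=\grad f(x_k)=0$; but then $x_k$ already satisfies the second-order optimality conditions and there is nothing to prove. Hence I may assume $\norm{s_k}>0$.

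The core of the argument is to estimate the numerator and denominator of $1-\rho_k$ separately. For the numerator, I would invoke the bound~\eqref{eq:fkplus1-model}, which combines the second-order retraction assumption~\aref{assu:second_order_retraction} with the Lipschitz-type inequality~\aref{assu:hessian_lipschitz}, to get $f(R_{x_k}(s_k))-m_k(s_k)\le \tfrac{L_H}{6}\norm{s_k}^3$. For the denominator, since $x_k\in\Rgamma$, Lemma~\ref{lemma:convex_decrease} gives the model decrease $m_k(0)-m_k(s_k)\ge\tfrac{\gamma}{2}\norm{s_k}^2$. Dividing the first estimate by the second and cancelling the factor $\norm{s_k}^2>0$ yields $1-\rho_k\le \tfrac{L_H}{3\gamma}\norm{s_k}$.

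It remains to bound $\norm{s_k}$ using the smallness of the gradient. Lemma~\ref{lemma:normskR3} gives $\norm{s_k}\le \norm{g_k}/\gamma=\norm{\grad f(x_k)}/\gamma$, and plugging in the hypothesis~\eqref{eq:norm_grad_success} gives $\norm{s_k}<3(1-\eta_1)\gamma/L_H$. Substituting this into the previous display produces $1-\rho_k<\tfrac{L_H}{3\gamma}\cdot\tfrac{3(1-\eta_1)\gamma}{L_H}=1-\eta_1$, so $\rho_k>\eta_1$ and the iteration is successful (in fact it may even be very successful).

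As for difficulty, I do not expect a genuine obstacle: the statement is a direct chaining of the model-decrease lemma in $\Rgamma$ (Lemma~\ref{lemma:convex_decrease}), the cubic error bound coming from the Lipschitz Hessian~\eqref{eq:fkplus1-model}, and the elementary step-length bound of Lemma~\ref{lemma:normskR3}. The only points that require a moment of care are handling the trivial zero-step case cleanly and making sure the chain of inequalities ends strictly below $1-\eta_1$, which is guaranteed by the strict inequality in the hypothesis~\eqref{eq:norm_grad_success}.
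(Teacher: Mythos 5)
Your proof is correct and follows essentially the same route as the paper: bound the numerator of $1-\rho_k$ by $\tfrac{L_H}{6}\norm{s_k}^3$ via~\eqref{eq:fkplus1-model}, the denominator from below by $\tfrac{\gamma}{2}\norm{s_k}^2$ via Lemma~\ref{lemma:convex_decrease}, then use Lemma~\ref{lemma:normskR3} with~\eqref{eq:norm_grad_success} to conclude $1-\rho_k<1-\eta_1$; the paper merely phrases the same estimates through the rearranged success condition~\eqref{eq:alternativerhok} instead of dividing, and leaves the $s_k=0$ corner case implicit just as you do.
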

\begin{proof}
	First, we note that the condition for a successful step $\rho_k \geq \eta_1$ is equivalent to
	\begin{equation}
	\label{eq:alternativerhok}
		f(R_k(s_k)) -m_k(s_k) + (1-\eta_1)(m_k(s_k) - m_k(0)) \le 0.
	\end{equation}
	The proof then consists in finding an upper bound for the left-hand side that is negative. From Lemma~\ref{lemma:convex_decrease}, we have that
 	\begin{align*}
  		(1-\eta_1) \left( m_k(s_k) - m_k(0) \right) \leq (1-\eta_1)(-\dfrac{\gamma}{2}\norm{s_k}^2).
 	\end{align*}
 	Combining this property with~\eqref{eq:fkplus1-model} and~\eqref{eq:normskR3} gives
 	\begin{align*}
 		f(R_k(s_k)) -m_k(s_k) + (1-\eta_1)(m_k(s_k) - m_k(0))
 		&\leq \dfrac{L_H}{6}\norm{s_k}^3 - \dfrac{\gamma}{2}(1-\eta_1)\norm{s_k}^2\\
 		&= \norm{s_k}^2 \left(\dfrac{L_H}{6}\norm{s_k}-\dfrac{\gamma}{2}(1-\eta_1)\right) \\
 		&\le \norm{s_k}^2\left(\dfrac{L_H}{6}\dfrac{\norm{g_k}}{\gamma}-\dfrac{\gamma}{2}(1-\eta_1)\right).
 	\end{align*}
 	The right-hand side is negative by~\eqref{eq:norm_grad_success}, from which we conclude that~\eqref{eq:alternativerhok} holds, and the iteration is successful.
\end{proof}

We now show that if the gradient norm is small enough, the Newton step decreases the gradient norm.

\begin{proposition}%[Decrease in gradient norm]
\label{prop:decrease_grad_norm}
	Under~\aref{assu:second_order_retraction} and~\aref{assu:ss_R3}, let $x_k\in \Rgamma$ be
	an iterate produced by Algorithm~\ref{algo:exact_RTR} such that $s_k$ is the
	Newton step, i.e., $H_k s_k=-g_k$, with $\norm{s_k}\leq \smallstep$ where $\smallstep$ is the constant
	from Lemma~\ref{lemma:bound_grad_pullback}. If
	\begin{align}\label{eq:condition_decrease_grad}
		\norm{\grad f(x_k)} <  \dfrac{2\gamma^2}{\cR \hat{L}_H},
	\end{align}
	and the iteration is successful, then $\norm{\grad f(x_{k+1})}_{x_{k+1}} < \norm{\grad f(x_{k})}_{x_{k}}$.
\end{proposition}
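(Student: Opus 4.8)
The plan is to chain together the two gradient-norm estimates already available for Newton steps: Lemma~\ref{lemma:grad_next_step}, which bounds $\norm{\grad f(x_{k+1})}_{x_{k+1}}$ by $\cR \tfrac{\hat L_H}{2}\norm{s_k}_{x_k}^2$, and Lemma~\ref{lemma:normskR3}, which bounds $\norm{s_k}_{x_k}$ by $\norm{g_k}/\gamma$ whenever $x_k \in \Rgamma$. Since $s_k$ is here assumed to be the Newton step and $x_k\in\Rgamma$, both lemmas apply (the hypotheses of Lemma~\ref{lemma:grad_next_step}---namely $\norm{s_k}\le\smallstep$ and a successful iteration---are exactly what we have assumed). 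Combining them,
\begin{equation*}
\norm{\grad f(x_{k+1})}_{x_{k+1}} \;\leq\; \cR\,\frac{\hat L_H}{2}\,\norm{s_k}_{x_k}^2 \;\leq\; \cR\,\frac{\hat L_H}{2}\,\frac{\norm{g_k}^2}{\gamma^2} \;=\; \frac{\cR\,\hat L_H}{2\gamma^2}\,\norm{g_k}_{x_k}^2 .
\end{equation*}

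Then I would invoke the hypothesis~\eqref{eq:condition_decrease_grad}, which says $\norm{g_k}_{x_k} < \tfrac{2\gamma^2}{\cR\hat L_H}$, equivalently $\tfrac{\cR\hat L_H}{2\gamma^2}\norm{g_k}_{x_k} < 1$. Multiplying the displayed chain of inequalities by the harmless factor $\norm{g_k}_{x_k}$ (and dividing back), one gets
\begin{equation*}
\norm{\grad f(x_{k+1})}_{x_{k+1}} \;\leq\; \left(\frac{\cR\,\hat L_H}{2\gamma^2}\,\norm{g_k}_{x_k}\right)\norm{g_k}_{x_k} \;<\; \norm{g_k}_{x_k} \;=\; \norm{\grad f(x_k)}_{x_k},
\end{equation*}
which is exactly the claimed strict decrease. (To be careful about the trivial case: if $\norm{g_k}_{x_k}=0$ then $x_k$ is already a second-order critical point, $s_k=0$, and the strict inequality is vacuous or should be interpreted accordingly; the interesting regime is $\norm{g_k}_{x_k}>0$, where the division above is legitimate.)

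I do not expect a genuine obstacle here---this proposition is a short corollary of the two preceding lemmas together with the threshold in~\eqref{eq:condition_decrease_grad}. The only point requiring a little care is bookkeeping of the base points and norms: $\norm{s_k}$ and $g_k = \grad f(x_k)$ live in $\T_{x_k}\M$, while $\grad f(x_{k+1})$ lives in $\T_{x_{k+1}}\M$, so one must apply Lemma~\ref{lemma:grad_next_step} (which already handles the transition between tangent spaces via the constant $\cR$ coming from Lemma~\ref{lemma:bound_grad_pullback}) rather than naively comparing norms at different points. A secondary, purely cosmetic point is that the quadratic-convergence reading of this estimate---$\norm{\grad f(x_{k+1})} \le C\,\norm{\grad f(x_k)}^2$ with $C = \cR\hat L_H/(2\gamma^2)$---is what will be used downstream in Section~\ref{ssec:localcvexact} to close the local analysis, so it is worth stating the intermediate bound $\norm{\grad f(x_{k+1})}_{x_{k+1}} \le \tfrac{\cR\hat L_H}{2\gamma^2}\norm{\grad f(x_k)}_{x_k}^2$ explicitly before specializing to the strict-decrease conclusion.
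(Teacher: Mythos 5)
Your argument is exactly the paper's proof: it chains the estimate of Lemma~\ref{lemma:grad_next_step} with the bound $\norm{s_k}\le\norm{g_k}/\gamma$ of Lemma~\ref{lemma:normskR3}, then uses~\eqref{eq:condition_decrease_grad} to absorb the factor $\tfrac{\cR\hat L_H}{2\gamma^2}\norm{g_k}<1$ and obtain strict decrease. The remarks on tangent-space bookkeeping and the degenerate case $\norm{g_k}=0$ are fine but add nothing beyond the paper's one-line chain of inequalities.
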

\begin{proof}
	Combining~\eqref{eq:grad_next_step},~\eqref{eq:normskR3} and~\eqref{eq:condition_decrease_grad} gives
	\begin{align*}
		\norm{g_{k+1}} &\leq \dfrac{\cR \hat{L}_H}{2} \norm{s_k}^2
		\leq \dfrac{\cR \hat{L}_H}{2} \dfrac{\norm{g_k}^2}{\gamma^2}
		\leq \dfrac{\cR \hat{L}_H}{2\gamma^2} \norm{g_k} \cdot \norm{g_k}
		< \norm{g_k}. \qedhere
	\end{align*}
\end{proof}
In order to derive a local convergence result, we show
that, if $x_k \in \Rgamma$ and the gradient norm is small enough, the Newton step remains in the neighborhood of the same minimizer. 
\begin{proposition}%[Neighbourhood of $x^*$]
\label{prop:stay_in_C3}
	Under~\aref{assu:second_order_retraction} and~\aref{assu:ss_R3}, let $x_k\in \Rgamma$ be an iterate produced by Algorithm~\ref{algo:exact_RTR} such that $s_k$ is the
	Newton step, and $\norm{s_k}\leq \smallstep$ where $\smallstep$ is the constant
	from Lemma~\ref{lemma:bound_grad_pullback}, and that
	\begin{equation}
	\label{eq:norm_grad2}
		\norm{\grad f(x_k)} < \min\left(\cns \gamma,\dfrac{\gamma\delta}{2\cdxRs},\dfrac{\gamma\delta}{2},
		\dfrac{2\gamma^2}{\cR \hat{L}_H}\right).
	\end{equation}
	Let $x^* \in \M$ be a local minimum of problem~\eqref{eq:P}
	such that $\dist \left( x_k, x^*\right) \le \delta$ and $f$ is geodesically $\gamma$-strongly convex on
	$\{y \in \M: \dist (y,x^*) < 2 \delta\}$. If the iteration is successful, then
	$\dist\left(x_{k+1},x^*\right) < \delta$.
\end{proposition}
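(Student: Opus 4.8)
The plan is to bound $\dist(x_{k+1}, x^*)$ using the triangle inequality, splitting the distance into the part covered by the retraction step and the part $\dist(x_k, x^*)$ already controlled by hypothesis. First I would invoke Lemma~\ref{lemma:dist-retraction}: since $x_k \in \Rgamma$ and $\norm{s_k} \leq \cns$ (which I need to check from~\eqref{eq:norm_grad2}, using $\norm{s_k} \leq \norm{g_k}/\gamma < \cns$ via Lemma~\ref{lemma:normskR3} and the first term in the min), we get $\dist(x_k, x_{k+1}) = \dist(x_k, R_{x_k}(s_k)) \leq \cdxRs \norm{s_k}_{x_k}$. Combining with Lemma~\ref{lemma:normskR3} again, $\dist(x_k, x_{k+1}) \leq \cdxRs \norm{g_k}/\gamma$, and~\eqref{eq:norm_grad2} forces $\norm{g_k} < \gamma\delta/(2\cdxRs)$, so $\dist(x_k, x_{k+1}) < \delta/2$.

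Then by the triangle inequality $\dist(x_{k+1}, x^*) \leq \dist(x_{k+1}, x_k) + \dist(x_k, x^*) < \delta/2 + \delta$. This is not quite the claimed bound of $\delta$, so the naive splitting is too lossy — this is the main obstacle, and it is where the quadratic-convergence structure must enter. The fix is to use Proposition~\ref{prop:decrease_grad_norm} together with Lemma~\ref{lemma:grad_next_step}: under~\eqref{eq:norm_grad2} the iteration is successful (Proposition~\ref{prop:very_successful_steps} applies since the bound $2\gamma^2/(\cR\hat L_H)$ can be assumed, up to shrinking constants, to imply~\eqref{eq:norm_grad_success}, or this is assumed in the statement) and $\norm{g_{k+1}} < \norm{g_k}$. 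The sharper route: one must show that $x_{k+1}$ is in fact \emph{closer} to $x^*$, not merely within $3\delta/2$. For this I would instead bound $\dist(x_k, x^*)$ more carefully by relating it to $\norm{g_k}$: strong $\gamma$-convexity of $f$ on the geodesic ball of radius $2\delta$ around $x^*$ gives, along the minimizing geodesic from $x^*$ to $x_k$ (which lies in that ball since $\dist(x_k,x^*) \leq \delta < 2\delta$), the inequality $\gamma \dist(x_k, x^*) \leq \norm{\grad f(x_k)}$, i.e. $\dist(x_k, x^*) \leq \norm{g_k}/\gamma$. Then~\eqref{eq:norm_grad2} gives $\dist(x_k,x^*) < \delta/2$ through the third term $\gamma\delta/2$, and combined with $\dist(x_k, x_{k+1}) < \delta/2$ from the first part we get $\dist(x_{k+1}, x^*) < \delta/2 + \delta/2 = \delta$, as desired.

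So the key steps, in order, are: (1) from~\eqref{eq:norm_grad2} and Lemma~\ref{lemma:normskR3} deduce $\norm{s_k} < \cns$, enabling Lemma~\ref{lemma:dist-retraction}; (2) conclude $\dist(x_k, x_{k+1}) \leq \cdxRs\norm{s_k} \leq \cdxRs \norm{g_k}/\gamma < \delta/2$ using the $\gamma\delta/(2\cdxRs)$ term; (3) use $\gamma$-strong convexity of $f$ on the $2\delta$-ball around $x^*$ to get $\dist(x_k, x^*) \leq \norm{g_k}/\gamma < \delta/2$ using the $\gamma\delta/2$ term; (4) apply the triangle inequality to conclude $\dist(x_{k+1}, x^*) < \delta$. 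The delicate point is step (3): I need to justify that the Riemannian gradient of a geodesically strongly convex function controls the distance to the minimizer — this is the Riemannian analogue of the Euclidean inequality used in Example~\ref{ex:strcvx}, and follows from applying the one-dimensional strong convexity inequality to $f \circ c$ where $c$ is the minimizing geodesic from $x_k$ to $x^*$, noting $\grad f(x^*) = 0$ and that the whole segment stays inside the ball of radius $2\delta$ where strong convexity holds. The fourth term $2\gamma^2/(\cR\hat L_H)$ in~\eqref{eq:norm_grad2} is what is needed separately for successfulness and gradient decrease (Propositions~\ref{prop:very_successful_steps} and~\ref{prop:decrease_grad_norm}), carried along so that the conclusion chains into a full local convergence argument.
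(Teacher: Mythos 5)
Your proof is correct, but it takes a genuinely different route from the paper's. The paper first shows $\dist(x_{k+1},x^*)\le \delta/2+\delta<2\delta$ merely to place $x_{k+1}$ inside the region of strong convexity, then applies the geodesic strong-convexity inequality \emph{at the new iterate} to get $\dist(x_{k+1},x^*)\le \tfrac{2}{\gamma}\norm{\grad f(x_{k+1})}$, and finally invokes the gradient decrease $\norm{\grad f(x_{k+1})}<\norm{\grad f(x_k)}$ from Proposition~\ref{prop:decrease_grad_norm} (hence the hypotheses $\norm{s_k}\le\smallstep$ and the $2\gamma^2/(\cR\hat{L}_H)$ term in~\eqref{eq:norm_grad2}) to conclude $\dist(x_{k+1},x^*)<\delta$. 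You instead bound the distance \emph{at the current iterate}: $\dist(x_k,x^*)\le\norm{g_k}/\gamma<\delta/2$ via the $\gamma\delta/2$ term, and add the step bound $\dist(x_k,x_{k+1})\le\cdxRs\norm{g_k}/\gamma<\delta/2$, so the triangle inequality already gives $\delta$. This is more elementary: it never uses Lemma~\ref{lemma:grad_next_step}, Proposition~\ref{prop:decrease_grad_norm}, the hypothesis $\norm{s_k}\le\smallstep$, or the last term of~\eqref{eq:norm_grad2} (those remain needed elsewhere for the local-convergence chain, as you note). The one point you must be careful about, and which you correctly flag, is that you need the factor-one inequality $\gamma\,\dist(x_k,x^*)\le\norm{\grad f(x_k)}$; the factor-two bound obtained as in Example~\ref{ex:strcvx} (and as used in the paper's own proof at $x_{k+1}$) would only give $3\delta/2$. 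The sharper version does hold here: take a geodesic $c\subset\{y:\dist(y,x^*)<2\delta\}$ joining $x_k$ and $x^*$ (it exists by geodesic convexity of that set, and $\dist(x_k,x^*)\le L(c)$; minimality of the geodesic is not needed), write the first-order strong-convexity inequality at both endpoints, use $\grad f(x^*)=0$, and add the two inequalities — equivalently, use monotonicity of $(f\circ c)'$. Writing out this two-line derivation is the only thing your sketch leaves implicit; with it, the argument is complete. In exchange for being slightly longer, the paper's route delivers the relation $\dist(x_{k+1},x^*)\le\tfrac{2}{\gamma}\norm{\grad f(x_{k+1})}$, tying the new distance to the new gradient, which fits the quadratic-convergence bookkeeping of Section~\ref{ssec:localcvexact}; your route buys a shorter, self-contained proof of the stated claim.
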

\begin{proof}
	We first show that $\dist\left(x_{k+1},x^*\right) < 2\delta$. Using~\eqref{eq:normskR3} and~\eqref{eq:norm_grad2}, we have that
	\begin{align*}
		\norm{s_k} &\leq \dfrac{\norm{g_k}}{\gamma} <\dfrac{\cns \gamma}{\gamma} = \cns.
	\end{align*}
	It follows from Lemma~\ref{lemma:dist-retraction} that
	\[
		\dist(x_{k+1},x_k) \le \cdxRs\norm{s_k}
		\leq \cdxRs\dfrac{\norm{g_k}}{\gamma}
		\leq \dfrac{ \cdxRs}{\gamma}	\dfrac{\gamma\delta}{2\cdxRs}
		=\dfrac{\delta}{2},
	\]
	where the last inequality is due to~\eqref{eq:norm_grad2}. As a result,
	\begin{align*}
		\dist(x_{k+1},x^*) \le  \dist(x_{k+1},x_k) + \dist(x_k,x^*) \le \dfrac{\delta}{2} + \delta < 2\delta.
	\end{align*}
	By definition of $x^*$, we know that $f$ is geodesically $\gamma$-strongly convex over $S\subseteq \M$, a
	subset of $\M$ that includes $x^*$ and $x_{k+1}$. Consider a geodesic $c\colon [0,1] \to S$ contained in $S$ with $c(0) = x_{k+1}$ and $c(1) = x^*$, such that
	$\dist(x_{k+1},x^*) \leq L(c)$ where $L(c)=\norm{c'(0)}_{c(0)}$ is the length of the geodesic path. From~\citep[Theorem 11.21]{boumal2023}, we have
	\begin{equation*}
		f(x^*) \geq f(x_{k+1}) + \inner{\grad f(x_{k+1})}{c'(0)}_{x_{k+1}} + \dfrac{\gamma}{2}L(c)^2.
	\end{equation*}
	Using $f(x^*) \leq f(x_{k+1})$ gives
	\begin{align*}
		\dfrac{\gamma}{2}L(c)^2 &\leq \inner{\grad f(x_{k+1})}{-c'(0)}_{x_{k+1}}		&\leq \norm{\grad f(x_{k+1})}_{x_{k+1}} \norm{c'(0)}_{x_{k+1}}		&= \norm{\grad f(x_{k+1})}_{x_{k+1}} L(c).
	\end{align*}
	Therefore, we have $L(c) \leq \dfrac{2}{\gamma}\norm{\grad f(x_{k+1})}_{x_{k+1}}$. To conclude, recall
	that we have $\norm{s_k} \le \smallstep$ by assumption, thus Proposition~\ref{prop:decrease_grad_norm}
	applies, and we obtain
	\begin{align*}
		\dist(x_{k+1},x^*)
		&\leq L(c) \leq \dfrac{2}{\gamma}\norm{\grad f(x_{k+1})}_{x_{k+1}}
		< \dfrac{2}{\gamma}\norm{\grad f(x_{k})}_{x_{k}}
		< \dfrac{2}{\gamma}\dfrac{\gamma\delta}{2} = \delta.\qedhere
	\end{align*}
\end{proof}

We now characterize the local convergence of Algorithm~\ref{algo:exact_RTR}. The quadratic convergence rate stems from the following proposition.

\begin{proposition}\label{prop:newton_step}
	Under~\aref{assu:second_order_retraction} and~\aref{assu:ss_R3}, let $x_k\in \Rgamma$ be an iterate produced by Algorithm~\ref{algo:exact_RTR} such that $s_k$ is a
	Newton step, and $\norm{s_k}\leq \smallstep$ where $\smallstep$ is the constant
	from Lemma~\ref{lemma:bound_grad_pullback}. Suppose further that the $k$th iteration is
	successful. Then,
	\begin{equation*}
		\dfrac{\cR \hat{L}_H}{2\gamma^2}\norm{\grad f(x_{k+1})}_{x_{k+1}}
		\leq \left( \dfrac{\cR \hat{L}_H}{2\gamma^2}\norm{\grad f(x_{k})}_{x_{k}}\right)^2.
	\end{equation*}
\end{proposition}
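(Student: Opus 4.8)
The plan is to chain together the two key estimates already established for Newton steps in $\Rgamma$: the quadratic gradient bound of Lemma~\ref{lemma:grad_next_step} and the elementary step-size bound of Lemma~\ref{lemma:normskR3}. All hypotheses of Proposition~\ref{prop:newton_step} are exactly those required to invoke both lemmas, so no additional conditions need to be checked: $x_k \in \Rgamma$, $s_k$ is the Newton step $H_k s_k = -g_k$, $\norm{s_k} \le \smallstep$, and the $k$th iteration is successful.

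First I would recall that $g_k = \grad f(x_k)$ by definition of the model~\eqref{eq:tr_model}. Since $s_k$ is the Newton step with $\norm{s_k}\le\smallstep$ and the iteration is successful, Lemma~\ref{lemma:grad_next_step} applies and gives
\begin{equation*}
	\norm{\grad f(x_{k+1})}_{x_{k+1}} \leq \cR\,\frac{\hat L_H}{2}\,\norm{s_k}_{x_k}^2.
\end{equation*}
Since $x_k\in\Rgamma$, Lemma~\ref{lemma:normskR3} gives $\norm{s_k}_{x_k} \le \norm{g_k}/\gamma$. Substituting yields
\begin{equation*}
	\norm{\grad f(x_{k+1})}_{x_{k+1}} \leq \cR\,\frac{\hat L_H}{2}\,\frac{\norm{g_k}^2}{\gamma^2}.
\end{equation*}

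Finally I would multiply both sides by $\dfrac{\cR \hat L_H}{2\gamma^2}$ and regroup the constants to recognize a perfect square:
\begin{equation*}
	\frac{\cR \hat L_H}{2\gamma^2}\,\norm{\grad f(x_{k+1})}_{x_{k+1}}
	\leq \left(\frac{\cR \hat L_H}{2\gamma^2}\right)^2 \norm{g_k}^2
	= \left(\frac{\cR \hat L_H}{2\gamma^2}\,\norm{\grad f(x_k)}_{x_k}\right)^2,
\end{equation*}
which is the claimed inequality. There is essentially no obstacle here — the substance of the argument is carried by Lemmas~\ref{lemma:grad_next_step} and~\ref{lemma:normskR3}, and the proposition is just the clean bookkeeping that packages those two bounds into the standard self-referential form of a quadratic-convergence estimate, setting up the contraction argument $\tfrac{\cR\hat L_H}{2\gamma^2}\norm{g_{k+1}} \le (\tfrac{\cR\hat L_H}{2\gamma^2}\norm{g_k})^2$ that will be iterated in the subsequent local convergence result.
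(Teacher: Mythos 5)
Your argument is correct and is exactly the paper's proof: apply Lemma~\ref{lemma:grad_next_step} to get $\norm{\grad f(x_{k+1})} \le \tfrac{\cR\hat L_H}{2}\norm{s_k}^2$, bound $\norm{s_k}\le\norm{g_k}/\gamma$ via Lemma~\ref{lemma:normskR3}, and multiply through by $\tfrac{\cR\hat L_H}{2\gamma^2}$. No differences worth noting.
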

\begin{proof}
	Using Lemma~\ref{lemma:grad_next_step} and Lemma~\ref{lemma:normskR3} gives
	\begin{align*}
		\norm{\grad f(x_{k+1})}_{x_{k+1}}
		\le \dfrac{\cR \hat{L}_H}{2}\norm{s_k }_{x_k}^2
		\le \dfrac{\cR \hat{L}_H}{2\gamma^2}\norm{\grad f(x_k) }_{x_k}^2,
	\end{align*}
	and multiplying both sides by $\dfrac{\cR \hat{L}_H}{2\gamma^2}$ yields the
	desired conclusion.
\end{proof}

Proposition~\ref{prop:newton_step} guarantees that the gradient norm decreases quadratically along Newton steps, provided that the gradient norm is small enough. In Proposition~\ref{prop:tr_local_convergence_Cora}, we show that the local phase of Algorithm~\ref{algo:exact_RTR} begins once the trust-region method generates a point in $\Rgamma$ with a small enough gradient.
\begin{proposition}[Local convergence of Algorithm~\ref{algo:exact_RTR}]
\label{prop:tr_local_convergence_Cora}
	Suppose that~\aref{assu:second_order_retraction},~\aref{assu:ss_R3} and~\aref{assu:hessian_lipschitz} hold.
	Let $x_k\in \Rgamma$ be an iterate produced by Algorithm~\ref{algo:exact_RTR}, and let $x^* \in \M$ be
	a local minimum of~\eqref{eq:P} such that $\dist(x_k,x^*)\leq \delta$ and $f$ is geodesically
	$\gamma$-strongly convex on $\{y\in \M: \dist(y,x^*) < 2\delta\}$. Finally, suppose that 
		\begin{equation}
	\label{eq:gradcondlocalcv}
		\norm{\grad f(x_k)} < \min\left(\cquad \min\left(\gamma,\gamma^2,\gamma\delta\right),\gamma\Delta_k\right)
	\end{equation}
	where $\cquad=\min\!\left( \dfrac{3(1-\eta_1)}{L_H},\smallstep,\dfrac{1}{\cdxRs},\dfrac{1}{2},\dfrac{1}{\cR\,\hat{L}_H}\right)$.
	Then, all subsequent iterations are successful Newton steps that remain in $\Rgamma$. Moreover, the sequence of gradient norms
	$\left(\norm{\grad f(x_k)}\right)_k$ converges quadratically to zero.
\end{proposition}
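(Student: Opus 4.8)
The plan is to establish both conclusions — that every iteration $j\ge k$ is a successful Newton step keeping $x_j$ in $\Rgamma$, and that $\norm{\grad f(x_j)}$ converges quadratically to zero — simultaneously by induction on $j$. The inductive hypothesis at index $j$ is: $x_j\in\Rgamma$ in the neighborhood of the same local minimizer $x^*$, and $\norm{\grad f(x_j)}<\min\!\left(\cquad\min(\gamma,\gamma^2,\gamma\delta),\gamma\Delta_j\right)$. The base case $j=k$ is exactly the assumption~\eqref{eq:gradcondlocalcv} together with the hypotheses on $x_k$.

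\textbf{Inductive step.} Assume the hypothesis holds at index $j$. Since $x_j\in\Rgamma$ we have $\lambdamin(H_j)\ge\gamma>0$, so $m_j$ has a unique unconstrained minimizer $-H_j^{-1}g_j$, whose norm is at most $\norm{g_j}/\gamma$ by Lemma~\ref{lemma:normskR3}; the bound $\norm{g_j}<\gamma\Delta_j$ shows this point is strictly inside the trust region, hence $s_j$ is the Newton step and $\norm{s_j}\le\norm{g_j}/\gamma<\cquad\le\smallstep$ (after possibly decreasing $\smallstep$ so that $\smallstep\le\cns$, which only strengthens Lemma~\ref{lemma:bound_grad_pullback}). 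Next, $\norm{g_j}<\cquad\gamma^2\le\tfrac{3(1-\eta_1)}{L_H}\gamma^2$, so Proposition~\ref{prop:very_successful_steps} makes iteration $j$ successful; thus $x_{j+1}=R_{x_j}(s_j)$ and, by the radius update, $\Delta_{j+1}\ge\Delta_j$. Since~\eqref{eq:gradcondlocalcv} implies~\eqref{eq:norm_grad2} (a matter of matching the constants inside $\cquad$), Proposition~\ref{prop:stay_in_C3} gives $\dist(x_{j+1},x^*)<\delta$, so $x_{j+1}\in\Rgamma$ near the same $x^*$ — the geodesic $\gamma$-strong convexity on the $2\delta$-ball around $x^*$ is intrinsic to $x^*$ and unchanged. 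Finally $\norm{g_j}<\tfrac{2\gamma^2}{\cR\hat{L}_H}$ puts us in the setting of Proposition~\ref{prop:decrease_grad_norm}, yielding $\norm{\grad f(x_{j+1})}<\norm{\grad f(x_j)}$; combined with $\Delta_{j+1}\ge\Delta_j$, this restores the inductive hypothesis at index $j+1$.

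\textbf{Quadratic rate.} With all iterations $j\ge k$ being successful Newton steps in $\Rgamma$ with $\norm{s_j}\le\smallstep$, Proposition~\ref{prop:newton_step} applies at every such $j$: setting $a_j:=\tfrac{\cR\hat{L}_H}{2\gamma^2}\norm{\grad f(x_j)}$ gives $a_{j+1}\le a_j^2$. The assumption~\eqref{eq:gradcondlocalcv} forces $a_k<\tfrac12<1$ (using $\cquad\le\tfrac{1}{\cR\hat{L}_H}$ and $\min(\gamma,\gamma^2,\gamma\delta)\le\gamma^2$), hence $a_j\le a_k^{2^{j-k}}\to 0$, i.e., $\norm{\grad f(x_j)}\to 0$ quadratically.

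\textbf{Main difficulty.} I do not expect a genuine obstacle: the proposition is essentially an orchestration of the four preceding results. The only real content is (a) the observation that the trust-region term $\gamma\Delta_j$ in~\eqref{eq:gradcondlocalcv} is self-sustaining — it is precisely what forces the Newton step to solve the subproblem, and it is reproduced at $j+1$ exactly because a successful step never shrinks $\Delta$ — and (b) the delicate but routine check that the single scalar threshold in~\eqref{eq:gradcondlocalcv}, through the specific value of $\cquad$, simultaneously dominates~\eqref{eq:norm_grad_success},~\eqref{eq:norm_grad2} and~\eqref{eq:condition_decrease_grad}, using that $\min(\gamma,\gamma^2,\gamma\delta)$ is no larger than each of $\gamma$, $\gamma^2$ and $\gamma\delta$.
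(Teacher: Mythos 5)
Your proposal is correct and follows essentially the same route as the paper's proof: identify the step as an interior Newton step via Lemma~\ref{lemma:normskR3} and the bound $\norm{g_k}<\gamma\Delta_k$, invoke Propositions~\ref{prop:very_successful_steps}, \ref{prop:stay_in_C3} and~\ref{prop:decrease_grad_norm} to get success, persistence in $\Rgamma$ and gradient decrease, propagate via $\Delta_{k+1}\ge\Delta_k$ on successful iterations, and conclude the quadratic rate from Proposition~\ref{prop:newton_step} with $\tfrac{\cR\hat L_H}{2\gamma^2}\norm{g_k}<\tfrac12$. Your explicit induction and the remark that $\smallstep$ may be shrunk to sit below $\cns$ only make explicit the constant-matching the paper leaves implicit.
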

\begin{proof}
 Lemma~\ref{lemma:normskR3} and~\eqref{eq:gradcondlocalcv} give $\norm{s_k} \leq \norm{ \grad f(x_k)}/\gamma < \Delta_k$, and
	thus $s_k$ is the Newton step.
	In addition, the condition~\eqref{eq:gradcondlocalcv} also implies~\eqref{eq:norm_grad_success}, thus
	the $k$th iteration is successful by Proposition~\ref{prop:very_successful_steps}. Similarly,~\eqref{eq:gradcondlocalcv}	implies~\eqref{eq:condition_decrease_grad},~\eqref{eq:norm_grad2}, and $\norm{s_k}\leq \smallstep$, which yields
	$\norm{g_{k+1}} <\norm{g_k}$ (Proposition~\ref{prop:decrease_grad_norm}) and $\dist(x_{k+1},x^*)< \delta$
	(Proposition~\ref{prop:stay_in_C3}).

	Since $\norm{g_{k+1}}< \norm{g_k} < \gamma \Delta_k \leq \gamma \Delta\kplus$, the same reasoning applies
	at iteration $k+1$, and at every subsequent iteration by induction, proving the first part of the
	result. Finally, quadratic convergence follows from repeated application of Proposition~\ref{prop:newton_step}, as~\eqref{eq:gradcondlocalcv} implies
	$\norm{\grad f(x_k)} < \gamma^2/(\cR \hat{L}_H)$.
	Indeed, for any index $l \ge k$, we have
	\begin{equation}\label{eq:proof_quadratic}
			\dfrac{\hat{L}_H\cR}{2\gamma^2}\norm{\grad f(x_{l})} \le
		\left( \dfrac{\hat{L}_H\cR}{2\gamma^2}\norm{\grad f(x_{k})}\right)^{2^{l-k}}
		\le \left( \dfrac{1}{2}\right)^{2^{l-k}},
	\end{equation}
	which characterizes quadratic convergence.
\end{proof}

We emphasize that the local convergence property is an integral part of our \emph{global}
convergence analysis. Deriving global rates of convergence (i.e., complexity results) is the subject
of the next section.

%%%%%%%%%%%%%%%%%%%%%%%%%%%%%%%%%%%%%%%%%%%%%%%%%%%%%%%%%%%%%%%%%%%%%%%%%%%%%%%%%%%%%%%%%%%%%%%%%%%%%%%%%%%%%
\subsection{Complexity bounds}
\label{ssec:wccexact}

In this section, we combine the results from Sections~\ref{ssec:lemmasexact} and~\ref{ssec:localcvexact} to
obtain complexity bounds. More precisely, we seek a bound on the number of iterations
Algorithm~\ref{algo:exact_RTR} requires to reach an iterate $x_K \in \M$ that is an $(\varepsilong, \varepsilonH)$-second-order critical point---defined in Equation~\eqref{eq:target}.

Following Section~\ref{ssec:localcvexact}, we can bound the number of iterations in the local phase necessary
to satisfy~\eqref{eq:target}. The result below is a direct corollary of
Proposition~\ref{prop:tr_local_convergence_Cora}.

\begin{theorem}
\label{theorem:local_phase}
	Let the assumptions of Proposition~\ref{prop:tr_local_convergence_Cora} hold for $x_k \in \Rgamma$
	generated by Algorithm~\ref{algo:exact_RTR}. Then, the algorithm returns an iterate satisfying
	\eqref{eq:target} in at most
	\begin{equation}
	\label{eq:local_phase}
		\log_2 \log_2 \left( \dfrac{2\gamma^2}{\cR \hat{L}_H \varepsilon_g}\right)
	\end{equation}
	iterations following iteration $k$.
\end{theorem}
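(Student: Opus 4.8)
The plan is to recognize that this theorem is essentially just bookkeeping on top of Proposition~\ref{prop:tr_local_convergence_Cora}, which already does all the heavy lifting: once an iterate $x_k \in \Rgamma$ satisfies the gradient condition~\eqref{eq:gradcondlocalcv}, every subsequent iteration is a successful Newton step staying in $\Rgamma$, and the rescaled gradient norms obey the doubly-exponential decay~\eqref{eq:proof_quadratic}. So first I would invoke that proposition to get, for all $l \ge k$,
\begin{equation*}
	\frac{\hat L_H \cR}{2\gamma^2}\,\norm{\grad f(x_l)} \le \left(\frac{\hat L_H \cR}{2\gamma^2}\,\norm{\grad f(x_k)}\right)^{2^{l-k}}.
\end{equation*}

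Next I would ask how large $l-k$ must be for $\norm{\grad f(x_l)} \le \varepsilon_g$. Writing $q := \tfrac{\hat L_H \cR}{2\gamma^2}\norm{\grad f(x_k)} < \tfrac12$ (the bound $q<\tfrac12$ comes from~\eqref{eq:gradcondlocalcv} as noted at the end of the proof of Proposition~\ref{prop:tr_local_convergence_Cora}), it suffices that $q^{2^{l-k}} \le \tfrac{\hat L_H \cR}{2\gamma^2}\varepsilon_g$, i.e. $2^{l-k} \log(1/q) \ge \log\!\big(\tfrac{2\gamma^2}{\hat L_H \cR \varepsilon_g}\big)$. Using $\log(1/q) > \log 2$ (the conservative bound), this holds as soon as $2^{l-k} \ge \log_2\!\big(\tfrac{2\gamma^2}{\cR \hat L_H \varepsilon_g}\big)$, i.e. as soon as $l-k \ge \log_2\log_2\!\big(\tfrac{2\gamma^2}{\cR \hat L_H \varepsilon_g}\big)$, which is exactly~\eqref{eq:local_phase}. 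I would also remark that the Hessian condition in~\eqref{eq:target} is automatic here: by Proposition~\ref{prop:geostrcvxeigs} (or directly the definition of $\Rgamma$), $\lambda_{\min}(\Hess f(x_l)) \ge \gamma > 0 \ge -\varepsilon_H$ at every iterate of the local phase, so only the gradient bound is binding.

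There is essentially no obstacle here — the statement is a corollary and the proof is a short logarithm manipulation — but the one place to be slightly careful is the direction of the inequalities: one wants the \emph{smallest} integer $l-k$ guaranteeing $\norm{\grad f(x_l)}\le\varepsilon_g$, and the clean form~\eqref{eq:local_phase} is obtained by deliberately using the loose estimate $\log(1/q) > \log 2$ rather than tracking the true (better) contraction factor. I would also note the edge cases: if the quantity inside the outer $\log_2$ is $\le 1$ then $\norm{\grad f(x_k)}\le\varepsilon_g$ already and zero further iterations are needed, consistent with reading~\eqref{eq:local_phase} as $\max(0,\cdot)$ or simply as a ceiling; and since every relevant iteration is successful, "iterations following iteration $k$" coincides with the number of new iterates produced, so there is no gap between iteration count and step count in this regime.
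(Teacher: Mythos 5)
Your proposal is correct and follows essentially the same route as the paper: both invoke the doubly-exponential decay~\eqref{eq:proof_quadratic} from Proposition~\ref{prop:tr_local_convergence_Cora} and extract the bound by taking logarithms twice, the paper phrasing it contrapositively (if $\norm{\grad f(x_l)}\ge\varepsilon_g$ then $l-k$ is at most the stated quantity) while you argue the direct implication. Your added remarks on the Hessian condition being automatic in $\Rgamma$ and on the trivial edge case are consistent with the paper's implicit usage and do not change the argument.
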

\begin{proof}
	For any $l \ge k$, it follows from Equation~\eqref{eq:proof_quadratic} that if $\norm{\grad f(x_l)} \geq \varepsilon_g$, it must be that
	\begin{equation*}
		l-k \leq \log_2 \log_2 \left( \dfrac{2\gamma^2}{\hat{L}_H \cR  \varepsilon_g}\right).\qedhere
	\end{equation*}
\end{proof}

Our main complexity result comes under the assumption that $f$ is lower bounded on $\M$.
\begin{assumption}
\label{assu:lower_bound}
	There exists $f^*>-\infty$ such that $f(x) \geq f^*$ for all $x\in \M$.
\end{assumption}

We first give an upper bound on the number of successful steps for Algorithm~\ref{algo:exact_RTR}.
\begin{theorem}[Number of successful iterations of Algorithm~\ref{algo:exact_RTR}]
\label{thm:successful_complexity_exact}
	Suppose that~\aref{assu:second_order_retraction}--\aref{assu:lower_bound} hold. Algorithm~\ref{algo:exact_RTR}
	produces an iterate satisfying~\eqref{eq:target} in at most
	\begin{align*}
		\dfrac{C}{\min\!\left(\ualpha^2,\ualpha^{4/3}\ubeta,\ualpha^{4/3}\ugamma,\ualpha^{2/3}\ugamma^2,\ubeta^3,\ubeta^2\ugamma,	\ubeta\ugamma^2,
		\ugamma^3,\ugamma^2\udelta\right)}
		+ 1 + \log_2 \log_2 \left( \dfrac{2\gamma^2}{\cR \hat{L}_H \varepsilon_g}\right)
	\end{align*}
	successful iterations, where the constant $C>0$ depends on $\cH,\cDelta,\Delta_0,\smallstep,\cR,\hat{L}_H,\eta_1,\cquad$,
	and for any $\theta \in \{\alpha,\beta,\gamma,\delta\}$, we define
	$\underline{\theta}=\min(1,\theta)$.
\end{theorem}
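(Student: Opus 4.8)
The plan is to bound the number of successful iterations by partitioning them according to which region of the strict-saddle decomposition $\M=\Ralpha\cup\Rbeta\cup\Rgamma$ the current iterate belongs to, and by whether the local convergence phase of Proposition~\ref{prop:tr_local_convergence_Cora} has begun. Away from the minimizers I will exhibit a uniform per-iteration decrease of $f$; combined with $f\ge f^*$ from~\aref{assu:lower_bound} this yields the polynomial term $C/(\cdots)$. Inside a neighbourhood of a minimizer, the local quadratic convergence established in Section~\ref{ssec:localcvexact} supplies the $\log_2\log_2$ term together with the additive constant $1$.

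For the non-local part, I would fix a successful iteration $k$ preceding the first iterate satisfying~\eqref{eq:target} and split into three cases. If $x_k\in\Ralpha$, Lemma~\ref{lemma:cauchy_decrease} and the radius bound $\Delta_k\ge\Deltamin$ (Lemma~\ref{lemma:lower_bound_radius}) give $m_k(0)-m_k(s_k)\ge\tfrac12\min(\Deltamin,\alpha/\cH)\,\alpha$. If $x_k\in\Rbeta$, Lemma~\ref{lemma:eigenstep_decrease} and $\Delta_k\ge\Deltamin$ give $m_k(0)-m_k(s_k)\ge\tfrac12\beta\,\Deltamin^2$. If $x_k\in\Rgamma$ and the local phase has not started (so that~\eqref{eq:gradcondlocalcv} fails), negating~\eqref{eq:gradcondlocalcv} and using $\Delta_k\ge\Deltamin$ produces the lower bound $\norm{\grad f(x_k)}\ge G:=\min\!\big(\cquad\min(\gamma,\gamma^2,\gamma\delta),\gamma\Deltamin\big)$; since in $\Rgamma$ the operator $H_k$ is positive definite with $\inner{s}{H_ks}\ge\gamma\norm{s}^2$ and $\norm{H_k}\le\cH$, the subproblem solution either satisfies $\norm{s_k}=\Delta_k\ge\Deltamin$ or equals the interior Newton step with $\norm{s_k}\ge\norm{g_k}/\cH\ge G/\cH$, so Lemma~\ref{lemma:convex_decrease} gives $m_k(0)-m_k(s_k)\ge\tfrac\gamma2\min\!\big(\Deltamin^2,G^2/\cH^2\big)$ in both sub-cases. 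As a successful step decreases $f$ by at least $\eta_1$ times the model decrease, in each of the three cases $f(x_k)-f(x_{k+1})\ge\eta_1 D$, where $D>0$ depends only on $\cH,\cDelta,\Delta_0,\smallstep,\cR,\hat{L}_H,\eta_1,\cquad$ and on $(\alpha,\beta,\gamma,\delta)$ through $\Deltamin$ and $G$.

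Since $f$ is non-increasing along the iterates and bounded below by $f^*$, the number of successful iterations of these three types is at most $(f(x_0)-f^*)/(\eta_1 D)$. I would then substitute $\Deltamin=\cDelta\min(\Delta_0,\alpha^{1/2},\alpha^{2/3},\beta,\gamma)$ and the explicit form of $G$, and pass to $\underline\theta=\min(1,\theta)$ to make the estimate valid whether or not the parameters exceed $1$; expanding the minima then shows $D$ is bounded below by a constant (absorbed, with $f(x_0)-f^*$, into $C$) times $\min(\ualpha^2,\ualpha^{4/3}\ubeta,\ualpha^{4/3}\ugamma,\ualpha^{2/3}\ugamma^2,\ubeta^3,\ubeta^2\ugamma,\ubeta\ugamma^2,\ugamma^3,\ugamma^2\udelta)$, giving the first term of the bound. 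For the tail, let $k_0$ be the first index with $x_{k_0}\in\Rgamma$ satisfying~\eqref{eq:gradcondlocalcv} (if no such index occurs before termination, the $\log_2\log_2$ term and the $+1$ are dropped): by Proposition~\ref{prop:tr_local_convergence_Cora}, every iteration $k\ge k_0$ is a successful Newton step remaining in $\Rgamma$ with quadratically vanishing gradient, and Theorem~\ref{theorem:local_phase} bounds the number of iterations after $k_0$ needed to reach~\eqref{eq:target} by $\log_2\log_2\!\big(2\gamma^2/(\cR\hat{L}_H\varepsilon_g)\big)$; adding iteration $k_0$ itself accounts for the $+1$. Summing the two estimates finishes the argument.

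The hard part will be the $\Rgamma$-before-local-phase analysis: I need to convert the failure of~\eqref{eq:gradcondlocalcv} into a clean lower bound on $\norm{\grad f(x_k)}$---hence, via Lemma~\ref{lemma:convex_decrease}, on the decrease---that survives both the active-constraint and the interior-Newton-step sub-cases, and then carry out the exponent bookkeeping that collapses the three per-region lower bounds on $D$ into the single minimum displayed in the statement. The $\Ralpha$ and $\Rbeta$ estimates, the monotonicity-and-summation step, and the appeal to the local-convergence results of Section~\ref{ssec:localcvexact} should be routine.
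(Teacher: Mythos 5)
There is a genuine gap, and it sits exactly where you predicted: the $\Rgamma$-before-local-phase bookkeeping. Your per-iteration decrease there is quadratic in the gradient lower bound: negating~\eqref{eq:gradcondlocalcv} gives $\norm{g_k}\ge G:=\min\!\left(\cquad\min(\gamma,\gamma^2,\gamma\delta),\gamma\Deltamin\right)$, and via $\norm{s_k}\ge\norm{g_k}/\cH$ and Lemma~\ref{lemma:convex_decrease} you obtain a decrease of order $\gamma\min\!\left(\Deltamin^2,G^2/\cH^2\right)$. Since $G$ already contains $\gamma^2$, $\gamma\delta$ and $\gamma\Deltamin$, the term $\gamma G^2$ produces factors of order $\ugamma^5$, $\ugamma^3\udelta^2$, $\ualpha^{4/3}\ugamma^3$, $\ubeta^2\ugamma^3$, which are strictly worse (for small parameters) than the terms $\ugamma^3$, $\ugamma^2\udelta$, $\ualpha^{2/3}\ugamma^2$, $\ubeta\ugamma^2$ appearing in the stated minimum. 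So your route proves a correct but strictly weaker complexity bound; no amount of constant-juggling recovers the exponents in the theorem, because the mismatch is in powers of $\gamma$ and $\delta$, not in constants.

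The paper's device that you are missing is to measure the decrease of an interior Newton step \emph{linearly in the gradient at the next iterate}: Lemma~\ref{lemma:grad_next_step} gives $\norm{\grad f(x_{k+1})}\le\tfrac{\cR\hat{L}_H}{2}\norm{s_k}^2$ for short Newton steps ($\norm{s_k}\le\smallstep$), so Lemma~\ref{lemma:decrease_Newton} yields $m_k(0)-m_k(s_k)\ge\tfrac{\gamma}{\hat{L}_H\cR}\norm{\grad f(x_{k+1})}$. The set $\Igamma$ is then split by step length relative to $\smallstep$, boundary versus interior, and whether $\norm{g_{k+1}}$ exceeds the local-phase threshold: when it does, the decrease is of order $\gamma\cdot\min(\cquad\min(\gamma,\gamma^2,\gamma\delta),\gamma\Deltamin)$ (one power of the threshold, not two), which is what produces $\ugamma^3$, $\ugamma^2\udelta$, $\ualpha^{2/3}\ugamma^2$, $\ubeta\ugamma^2$; when it does not, the iteration is charged to the start of the local phase or, if $x_{k+1}\in\Ralpha\cup\Rbeta$, to $|\Ialpha|+|\Ibeta|$, giving the bound~\eqref{eq:boundgammaSIS}. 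Your treatment of $\Ralpha$, $\Rbeta$, the boundary case in $\Rgamma$, the telescoping argument under~\aref{assu:lower_bound}, and the appeal to Theorem~\ref{theorem:local_phase} for the $1+\log_2\log_2$ tail all match the paper; to close the gap you need to replace your current-gradient estimate in the interior-Newton case by the next-gradient estimate of Lemmas~\ref{lemma:grad_next_step}--\ref{lemma:decrease_Newton} together with the charging argument for small next gradients.
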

\begin{proof}
	Let $K \in \mathbb{N}$ such that Algorithm~\ref{algo:exact_RTR} has not produced an
	iterate satisfying~\eqref{eq:target} by iteration $K$.
	Let $\calS=\{k \le K: \rho_k \ge \eta_1\}$ denote the set of indices corresponding to successful
	(and very successful) iterations up to index $K$. We partition the set of iterations
	as follows:
	\begin{align*}
		\Ialpha &= \{k\in \calS: x_k\in \Ralpha\}\\
		\Ibeta &= \{k\in \calS: x_k\in \Rbeta\setminus \Ralpha\}\\
		\Igamma &= \{k\in \calS: x_k\in \Rgamma \setminus \Ralpha\}.
	\end{align*}
	We now bound the decrease in function value for all three sets.

	Let $k \in \Ialpha$, using Lemma~\ref{lemma:cauchy_decrease} and
	Lemma~\ref{lemma:lower_bound_radius}, we obtain
	\begin{small}
	\begin{equation}
	\label{eq:decsuccalpha}
		f(x_k)-f(x_{k+1})
		\ge \eta_1 \left(m_k(0)-m_k(s_k)\right)
		\ge \dfrac{\eta_1}{2}  \min \left( \dfrac{\alpha}{\cH}, \Delta_k\right)\alpha
		\ge \dfrac{\eta_1}{2} \min \left( \dfrac{\alpha}{\cH},\Deltamin \right)\alpha.
	\end{equation}
	\end{small}

	For $k \in \Ibeta$, combining Lemma~\ref{lemma:eigenstep_decrease} with
	Lemma~\ref{lemma:lower_bound_radius} gives
	\begin{equation}
	\label{eq:decsuccbeta}
		f(x_k)-f(x_{k+1})
		\ge \eta_1 \left(m_k(0)-m_k(s_k)\right)
		\ge \dfrac{\eta_1}{2} \Delta_k^2\beta
		\ge \dfrac{\eta_1}{2} \Deltamin^2 \beta.
	\end{equation}

	We partition $\Igamma$ into iterations with long steps, short steps on the boundary of the trust region, and short steps inside the trust region: $\Igamma := \Igamma^l \cup \Igamma^{s,b} \cup \Igamma^{s,i}$, where
	\begin{align*}
		\Igamma^l &= \{k\in \Igamma: \norm{s_k} > \smallstep\}\\
		\Igamma^{s,b} &= \{k\in \Igamma: \norm{s_k} \le \smallstep, \norm{s_k} = \Delta_k \}\\
		\Igamma^{s,i} &= \{k\in \Igamma: \norm{s_k} \le \smallstep, \norm{s_k} < \Delta_k \}
	\end{align*}
	where $\smallstep >0$ is defined in Lemma~\ref{lemma:bound_grad_pullback}.

	If $k \in \Igamma^l$, Lemma~\ref{lemma:convex_decrease} yields
	\begin{equation}
	\label{eq:decsuccgammaL}
		f(x_{k+1})-f(x_k)
		\ge \dfrac{\eta_1}{2} \gamma\norm{s_k}^2 
		\ge \dfrac{\eta_1}{2} \gamma \smallstep^2.
	\end{equation}

	If $k \in \Igamma^{s,b}$, we use Lemma~\ref{lemma:convex_decrease} together with
	Lemma~\ref{lemma:lower_bound_radius} to obtain
	\begin{equation}
	\label{eq:decsuccgammaSB}
		f(x_{k+1})-f(x_k)
		\ge \dfrac{\eta_1}{2} \gamma \norm{s_k}^2 
		= \dfrac{\eta_1}{2} \gamma\Delta_k^2 
		\ge \dfrac{\eta_1}{2} \gamma \Deltamin^2.
	\end{equation}

	Finally, if $k \in \Igamma^{s,i}$, we partition further $\Igamma^{s,i}$ into
	$\Igamma^{s,i,s} \cup \Igamma^{s,i,l}$, where
	\begin{align*}
		\Igamma^{s,i,l} &= \left\lbrace k\in \Igamma^{s,i}:
		\norm{g_{k+1}} \ge \min\!\left(\cquad\min\!\left(\gamma,\gamma^2,\gamma\delta\right)\!,\gamma\Delta_k \right)\!\right\rbrace, \\
		\Igamma^{s,i,s} &= \Igamma^{s,i} \setminus \Igamma^{s,i,l}.
	\end{align*}

	If $k \in \Igamma^{s,i,l}$, Proposition~\ref{lemma:decrease_Newton} implies
	\begin{eqnarray}
	\label{eq:decsuccgammaSIL}
		f(x_k) - f(x\kplus)
		\geq \eta_1\gamma \frac{\norm{g\kplus}}{\hat{L}_H\,\cR}
 		&\geq &\dfrac{\eta_1}{\hat{L}_H\,\cR} \min\!\left(\cquad \min\!\left(\gamma^2,\gamma^3,\gamma^2\delta\right)\!,\gamma^2\Delta_k\right)
 		\nonumber \\
 		&\geq &\dfrac{\eta_1}{\hat{L}_H\,\cR} \min\!\left(\cquad \min\!\left(\gamma^2,\gamma^3,\gamma^2\delta\right)\!,\gamma^2\Deltamin\right).
% 		\min\{\cquad\,\min\{\gamma^2,\gamma^2\delta,\gamma^3\},\gamma^2\Deltamin\}.
	\end{eqnarray}

	Finally, if $k \in \Igamma^{s,i,s}$, either $x_{k+1} \in \Igamma$ and the local convergence phase
	begins at $x_{k+1}$ according to Proposition~\ref{prop:tr_local_convergence_Cora}; which produces an iterate that satisfies~\eqref{eq:target} in a number of iterations given by~\eqref{eq:local_phase}. Otherwise, we must
	have $x_{k+1} \in \Ialpha \cup \Ibeta$, and as a result we have
	\begin{equation}
	\label{eq:boundgammaSIS}
		\left| \Igamma^{s,i,s}\right| \le |\Ialpha|+|\Ibeta|+1
		+\log_2 \log_2 \left( \dfrac{2\gamma^2}{\cR \hat{L}_H \varepsilon_g}\right).
	\end{equation}
	It thus suffices to bound the cardinality of $\calS_1$ and $\calS_2$ to bound $\left|\Igamma^{s,i,s}\right|$.

	Thanks to~\aref{assu:lower_bound}, we have
	\begin{eqnarray*}
		f(x_0)-f^*
		&\ge &f(x_0)-f(x_{K}) \\
		&\ge &\sum_{\substack{k \in \calS}} f(x_k)-f(x_{k+1}) \\
		&\ge &\sum_{\substack{k \in \Ialpha }} f(x_k)-f(x_{k+1})
		+\sum_{\substack{k \in \Ibeta }} f(x_k)-f(x_{k+1})
		+\sum_{\substack{k \in \Igamma^l }} f(x_k)-f(x_{k+1}) \\
		& &+\sum_{k \in \Igamma^{s,b}} f(x_k)-f(x_{k+1})
		+\sum_{k \in \Igamma^{s,i,l}} f(x_k)-f(x_{k+1}).
	\end{eqnarray*}
	Putting~\eqref{eq:decsuccalpha}, \eqref{eq:decsuccbeta}, \eqref{eq:decsuccgammaL},
	\eqref{eq:decsuccgammaSB} and \eqref{eq:decsuccgammaSIL} together, we obtain
	\begin{eqnarray*}
		f(x_0)-f^*
		&\ge &\left|\Ialpha\right| \dfrac{\eta_1}{2} \min \left( \alpha/\cH,\Deltamin \right)\alpha
		+\left|\Ibeta\right| \dfrac{\eta_1}{2} \Deltamin^2 \beta
		+\left|\Igamma^l\right| \dfrac{\eta_1}{2} \smallstep^2 \gamma \\
		& &+\left|\Igamma^{s,b}\right|\dfrac{\eta_1}{2} \Deltamin^2 \gamma
		+\left|\Igamma^{s,i,l}\right| \dfrac{\eta_1}{\hat{L}_H\,\cR} \min\!\left(\cquad\min\!\left(\gamma^2,\gamma^3,\gamma^2\delta\right)\!,\gamma^2\Deltamin\right).
	\end{eqnarray*}
	Since all quantities on the right-hand side are nonnegative, we can bound each cardinality
	independently as follows:
	\begin{eqnarray*}
		|\Ialpha| &\le &\frac{2(f(x_0)-f^*)}{\eta_1} \max\!\left(\cH\alpha^{-2},\Deltamin^{-1}\alpha^{-1}\right) \\
		|\Ibeta| &\le &\frac{2(f(x_0)-f^*)}{\eta_1} \Deltamin^{-2}\beta^{-1} \\
		|\Igamma^l| &\le &\frac{2(f(x_0)-f^*)}{\eta_1}\smallstep^{-2}\gamma^{-1} \\
		|\Igamma^{s,b}| &\le &\frac{2(f(x_0)-f^*)}{\eta_1}\Deltamin^{-2}\gamma^{-1} \\
		|\Igamma^{s,i,l}| &\le &\frac{\cR\hat{L}_H(f(x_0)-f^*)}{\eta_1}\max\!\left(\cquad^{-1}\max\!\left(\gamma^{-2},\gamma^{-3},
		\gamma^{-2}\delta^{-1}\right)\!,\gamma^{-2}\Deltamin^{-1}\right).
	\end{eqnarray*}
	Using that $\Deltamin =\cDelta \min\!\left(\Delta_0, \alpha^{1/2}, \alpha^{2/3}, \beta, \gamma\right)\! \ge \cDelta\min\!\left(\Delta_0,1\right)\!\min\!\left(\ualpha^{2/3},\ubeta,\ugamma\right)$ yields the following
	upper bounds
	\begin{eqnarray*}
		|\Ialpha| &\le &\frac{2(f(x_0)-f^*)}{\eta_1} \max\!\left(\cH,\cDelta^{-1}\Delta_0^{-1},\cDelta^{-1}\right)\!
		\max\!\left(\ualpha^{-2}, \ualpha^{-1}\ubeta^{-1},\ualpha^{-1}\ugamma^{-1}\right) \\
		|\Ibeta| &\le &\frac{2(f(x_0)-f^*)}{\eta_1} \max\!\left(\cDelta^{-2}\Delta_0^{-2},\cDelta^{-2}\right)\!
		\max\!\left(\ualpha^{-4/3}\ubeta^{-1},\ubeta^{-3},\ubeta^{-1}\ugamma^{-2}\right)	 \\
		|\Igamma^l| &\le &\frac{2(f(x_0)-f^*)}{\eta_1}\smallstep^{-2}\ugamma^{-1} \\
		|\Igamma^{s,b}| &\le &\frac{2(f(x_0)-f^*)}{\eta_1}\max\!\left(\cDelta^{-2}\Delta_0^{-2},\cDelta^{-2}\right)\!
		\max\!\left(\ualpha^{-4/3}\ugamma^{-1},\ubeta^{-2}\ugamma^{-1},\ugamma^{-3}\right) \\
		|\Igamma^{s,i,l}| &\le &\frac{\cR\hat{L}_H(f(x_0)-f^*)}{\eta_1}\max\!\left(\cquad^{-1},\cDelta^{-1}\Delta_0^{-1},\cDelta^{-1}\right)\!
		\max\!\left(\ualpha^{-2/3}\ugamma^{-2},\ubeta^{-1}\ugamma^{-2},\ugamma^{-3},\ugamma^{-2}\delta^{-1}\right)\!.
	\end{eqnarray*}
	Combining these bounds with~\eqref{eq:boundgammaSIS}, the total number of successful iterations
	is bounded as
	\begin{eqnarray*}
		|\calS| &= &|\Ialpha|+|\Ibeta|+|\Igamma^l|+|\Igamma^{s,b}|+|\Igamma^{s,i,l}|+|\Igamma^{s,i,s}| \\
		&\le &2|\Ialpha|+2|\Ibeta|+|\Igamma^l|+|\Igamma^{s,b}|+|\Igamma^{s,i,l}| + 1+\log_2 \log_2 \left( \dfrac{2\gamma^2}{\cR \hat{L}_H \varepsilon_g}\right).\\
		&\le &C\min\!\left(\ualpha^2,\ualpha^{4/3}\ubeta,\ualpha^{4/3}\ugamma,\ualpha^{2/3}\ugamma^2,\ubeta^3,\ubeta^2\ugamma,	\ubeta\ugamma^2,
		\ugamma^3,\ugamma^2\udelta\right)^{-1}
		+ 1+\log_2 \log_2 \left( \dfrac{2\gamma^2}{\cR \hat{L}_H \varepsilon_g}\right),
	\end{eqnarray*}
	where
	\begin{small}
	\begin{align*}
		C = \dfrac{(f(x_0)-f^*)}{\eta_1} \left[ 4\max\!\left(\cH,\cDelta^{-1}\Delta_0^{-1},\cDelta^{-1}\right)\!
		+6\max\!\left(\cDelta^{-2}\Delta_0^{-2},\cDelta^{-2}\right)	 	\right.\\
		\left.	+2\smallstep^{-2}
		+ \cR\hat{L}_H\max\!\left(\cquad^{-1},\cDelta^{-1}\Delta_0^{-1},\cDelta^{-1}\right)\!\right]. ~~~\qedhere
	\end{align*}
	\end{small}
%	By noticing that the bound on $|\calS|$ holds for any $K$ such that no iterate satisfying~\eqref{eq:target}
%	was computed prior to iteration $K$, we arrive at the desired conclusion.
\end{proof}

Theorem~\ref{thm:successful_complexity_exact} bounds the number of successful
iterations required to satisfy~\eqref{eq:target}, which corresponds to the number of gradient and Hessian evaluations. To account for the total number of iterations---the number of function evaluations, we
follow a common strategy and show that this number is at most a constant multiple of the number of successful
iterations.

\begin{lemma}%[Number of unsuccessful steps]
\label{le:unsuccessful_complexity_exact}
	Under the assumptions of Lemma~\ref{lemma:lower_bound_radius}, let $K \in \N$ and let $\calS$ denote
	the set of successful iterations of index $k \leq K$. Then,
	\begin{eqnarray*}
	\label{eq:succtotal}
		|\calS|
		&\ge
		&\frac{\log_{\tau_2}(1/\tau_1)}{1+\log_{\tau_2}(1/\tau_1)}(K+1) \nonumber \\
		&
		&- \frac{1}{1+\log_{\tau_2}(1/\tau_1)}
		\max\left[0,
		\log_{\tau_2}\left(\dfrac{1}{\cDelta}\right),
		\log_{\tau_2}\left(\dfrac{\Delta_0}{\cDelta\alpha^{\tfrac{1}{2}}}\right),
		\log_{\tau_2}\left(\dfrac{\Delta_0}{\cDelta\alpha^{\tfrac{2}{3}}}\right),
		\log_{\tau_2}\left(\dfrac{\Delta_0}{\cDelta \beta}\right),
		\right. \\
		&
		&\left.
		\log_{\tau_2}\left(\dfrac{\Delta_0}{\cDelta \gamma}\right)
		\right].
	\end{eqnarray*}
\end{lemma}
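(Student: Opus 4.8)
The plan is to run the standard trust-region accounting argument: track how the trust-region radius can evolve over the first $K+1$ iterations, and then play the resulting upper bound against the lower bound $\Deltamin$ supplied by Lemma~\ref{lemma:lower_bound_radius}.

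First I would record how $\Delta_k$ changes at a single step. From the radius update in Algorithm~\ref{algo:exact_RTR}, a successful or very successful iteration multiplies the radius by at most $\tau_2$: on a very successful step $\Delta_{k+1}=\min(\tau_2\Delta_k,\bar\Delta)\le\tau_2\Delta_k$, on a merely successful step $\Delta_{k+1}=\Delta_k\le\tau_2\Delta_k$ since $\tau_2>1$, and on an unsuccessful step $\Delta_{k+1}=\tau_1\Delta_k$ with $\tau_1<1$. Writing $\calS$ for the set of successful iterations of index at most $K$, the remaining $K+1-|\calS|$ iterations are unsuccessful, so telescoping the multiplicative factors from $\Delta_0$ to $\Delta_{K+1}$ yields $\Delta_{K+1}\le\Delta_0\,\tau_2^{|\calS|}\,\tau_1^{\,K+1-|\calS|}$.

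Next I would combine this with Lemma~\ref{lemma:lower_bound_radius}, which gives $\Delta_{K+1}\ge\Deltamin$, hence $\Deltamin\le\Delta_0\,\tau_2^{|\calS|}\,\tau_1^{\,K+1-|\calS|}$. Applying $\log_{\tau_2}$, which is legitimate and increasing because $\tau_2>1$, and using $\log_{\tau_2}\tau_1=-\log_{\tau_2}(1/\tau_1)<0$, transforms this into $\log_{\tau_2}(\Deltamin/\Delta_0)\le|\calS|-(K+1-|\calS|)\log_{\tau_2}(1/\tau_1)$. Collecting the $|\calS|$ terms on one side and dividing by the positive quantity $1+\log_{\tau_2}(1/\tau_1)$ isolates $|\calS|$ and produces exactly the leading term $\tfrac{\log_{\tau_2}(1/\tau_1)}{1+\log_{\tau_2}(1/\tau_1)}(K+1)$ together with the additive constant $\log_{\tau_2}(\Deltamin/\Delta_0)\big/\big(1+\log_{\tau_2}(1/\tau_1)\big)$. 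Finally I would lower-bound that constant: since $\Deltamin=\cDelta\min(\Delta_0,\alpha^{1/2},\alpha^{2/3},\beta,\gamma)$, dividing by $\Delta_0$ and pushing the increasing map $\log_{\tau_2}$ through the minimum gives $\log_{\tau_2}(\Deltamin/\Delta_0)=\min$ of $\log_{\tau_2}\cDelta$, $\log_{\tau_2}(\cDelta\alpha^{1/2}/\Delta_0)$, $\log_{\tau_2}(\cDelta\alpha^{2/3}/\Delta_0)$, $\log_{\tau_2}(\cDelta\beta/\Delta_0)$ and $\log_{\tau_2}(\cDelta\gamma/\Delta_0)$, i.e.\ $-\max$ of their negatives, which are the quantities in the statement; inserting the extra $0$ into the max only enlarges it and hence weakens the lower bound, so the claimed inequality follows.

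I expect no genuine obstacle here: this is a textbook counting lemma for trust-region methods. The only thing that requires care is bookkeeping — keeping track of inequality directions when applying $\log_{\tau_2}$ and when dividing by $1+\log_{\tau_2}(1/\tau_1)$, and correctly commuting $\min$ and $\max$ with $\log_{\tau_2}$. It is worth emphasizing that Lemma~\ref{lemma:lower_bound_radius} is the substantive ingredient of the argument: it guarantees the radius cannot collapse, so too many unsuccessful iterations would force $\Delta_{K+1}$ below the floor $\Deltamin$, which is precisely what pins $|\calS|$ to a fixed fraction of $K+1$.
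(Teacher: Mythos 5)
Your argument is correct and is essentially the paper's proof: the paper simply invokes the standard counting lemma (Lemma 6.23 of Boumal, 2023) with $\tau_1,\tau_2$ in place of $1/4,2$ and the radius floor $\Deltamin$ from Lemma~\ref{lemma:lower_bound_radius}, which is exactly the telescoping-and-$\log_{\tau_2}$ computation you carry out explicitly. Your handling of the extra $0$ in the max (weakening the bound) is also consistent with the paper's remark that $\cDelta\le 1$ makes the maximum nonnegative anyway.
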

\begin{proof}
	The proof follows verbatim~\citep[Lemma 6.23]{boumal2023} with \eqref{eq:lower_bound_radius}
	replacing~\citep[Eq. (6.36)]{boumal2023} and $\tau_1,\tau_2$ replacing $\tfrac{1}{4}$ and $2$,
	respectively. Since $\cDelta \le 1$ by definition, the maximum is always a nonnegative quantity.
\end{proof}

Combining Theorem~\ref{thm:successful_complexity_exact} with Lemma~\ref{le:unsuccessful_complexity_exact}
gives the total iteration complexity.

\begin{theorem}[Iteration complexity of Algorithm~\ref{algo:exact_RTR}]\label{thm:complexity_exact}
	Under the assumptions of Theorem~\ref{thm:successful_complexity_exact},
	Algorithm~\ref{algo:exact_RTR} produces a point that satisfies~\eqref{eq:target} in at most
	\begin{small}
	\begin{eqnarray}
	\label{eq:complexity_exact}
		\dfrac{1+\log_{\tau_2}(1/\tau_1)}{\log_{\tau_2}(1/\tau_1)}\left[
		\dfrac{C}{\min\!\left(\ualpha^2,\ualpha^{4/3}\ubeta,\ualpha^{4/3}\ugamma,\ualpha^{2/3}\ugamma^2,\ubeta^3,\ubeta^2\ugamma,	\ubeta\ugamma^2,
		\ugamma^3,\ugamma^2\udelta\right)} + 1
		+ \log_2 \log_2 \left( \dfrac{2\gamma^2}{\cR \hat{L}_H \varepsilon_g}\right)\right]
		\nonumber\\ 
		\\
		+ \dfrac{1}{\log_{\tau_2}(1/\tau_1)}\max\left(
		\log_{\tau_2}\left(\dfrac{1}{\cDelta}\right),
		\log_{\tau_2}\left(\dfrac{\Delta_0}{\cDelta\alpha^{\tfrac{1}{2}}}\right),
		\log_{\tau_2}\left(\dfrac{\Delta_0}{\cDelta\alpha^{\tfrac{2}{3}}}\right),
		\log_{\tau_2}\left(\dfrac{\Delta_0}{\cDelta \beta}\right),
		\log_{\tau_2}\left(\dfrac{\Delta_0}{\cDelta \gamma}\right)
		\right) \nonumber
	\end{eqnarray}
	\end{small}
	iterations, where $C,\ualpha,\ubeta,\ugamma,\udelta$ are defined as in
	Theorem~\ref{thm:successful_complexity_exact}.
\end{theorem}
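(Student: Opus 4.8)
The plan is to obtain the total iteration count by combining the two ingredients already established: the bound on the number of \emph{successful} iterations from Theorem~\ref{thm:successful_complexity_exact}, and the lower bound on successful iterations in terms of the total count from Lemma~\ref{le:unsuccessful_complexity_exact}. Concretely, fix $K\in\N$ such that Algorithm~\ref{algo:exact_RTR} has not produced an iterate satisfying~\eqref{eq:target} by iteration $K$, and let $\calS$ be the set of successful iterations of index at most $K$. The hypotheses of the present theorem are precisely \aref{assu:second_order_retraction}--\aref{assu:lower_bound}, which are exactly those of Theorem~\ref{thm:successful_complexity_exact}, and they also subsume the (weaker) assumptions of Lemma~\ref{lemma:lower_bound_radius} needed for Lemma~\ref{le:unsuccessful_complexity_exact}; so both results apply verbatim.

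First I would rewrite the inequality of Lemma~\ref{le:unsuccessful_complexity_exact}. Writing $a:=\log_{\tau_2}(1/\tau_1)>0$ (which is positive since $0<\tau_1<1<\tau_2$) and $M$ for the nonnegative $\max$ term appearing there (nonnegativity holds because $\cDelta\le 1$), the bound $|\calS|\ge \tfrac{a}{1+a}(K+1)-\tfrac{1}{1+a}M$ rearranges to
\[
	K+1 \le \frac{1+a}{a}\,|\calS| + \frac{1}{a}\,M.
\]
Next I would substitute the upper bound on $|\calS|$ supplied by Theorem~\ref{thm:successful_complexity_exact}, which is valid precisely because $K$ was chosen so that~\eqref{eq:target} is not yet met. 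Plugging this in and unwinding $a$, $M$ to the explicit $\log_{\tau_2}$ expressions yields exactly the right-hand side of~\eqref{eq:complexity_exact} as an upper bound on $K+1$. Since this holds for \emph{every} such $K$, the algorithm must reach an iterate satisfying~\eqref{eq:target} within the stated number of iterations.

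There is no genuine mathematical obstacle here; the argument is essentially a one-line combination of two previously proved results. The only care needed is bookkeeping: correctly tracking the multiplicative factor $\tfrac{1+\log_{\tau_2}(1/\tau_1)}{\log_{\tau_2}(1/\tau_1)}$ that multiplies the entire successful-iteration bound (including the additive $1+\log_2\log_2(\cdot)$ term from the local phase), and keeping the additive $\max$ term from Lemma~\ref{le:unsuccessful_complexity_exact} scaled by $\tfrac{1}{\log_{\tau_2}(1/\tau_1)}$. One should also note in passing that the constant $C$ and the shorthands $\ualpha,\ubeta,\ugamma,\udelta$ are inherited unchanged from Theorem~\ref{thm:successful_complexity_exact}, so no new constants are introduced.
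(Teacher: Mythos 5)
Your proposal is correct and matches the paper's argument: the paper gives no separate proof for this theorem beyond the remark that it follows by combining Theorem~\ref{thm:successful_complexity_exact} with Lemma~\ref{le:unsuccessful_complexity_exact}, which is exactly the rearrangement you carry out. The bookkeeping (the factor $\tfrac{1+\log_{\tau_2}(1/\tau_1)}{\log_{\tau_2}(1/\tau_1)}$ multiplying the full successful-iteration bound and the $\tfrac{1}{\log_{\tau_2}(1/\tau_1)}$-scaled $\max$ term, nonnegative since $\cDelta \le 1$) is handled correctly.
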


The bound of Theorem~\ref{thm:complexity_exact} holds for any values $\varepsilon_g>0$ and
$\varepsilon_H>0$, but it is especially relevant when $\varepsilon_g<\alpha$ and $\varepsilon_H<\beta$.
In that case, the iteration complexity~\eqref{eq:complexity_exact} is an improvement over the
$\calO\left(\max(\varepsilon_g^{-2}\varepsilonH^{-1}, \varepsilon_H^{-3})\right)$ bound of
Riemannian trust-region methods for generic nonconvex functions~\citep{boumal2019global}. Perhaps 
surprisingly, our bound does not depend on $\varepsilon_H$. This is because every iterate $x_k$ 
such that $\norm{\grad f(x_k)} \le \epsilon_g$ and $\lambda_{\mathrm{min}}(\Hess f(x_k))<-\varepsilonH$  belongs 
to $\Rbeta$, where the function decrease depends on $\beta$. In fact, if $\varepsilon_g<\alpha$ and $\varepsilon_H<\beta$, Algorithm~\ref{algo:exact_RTR} necessarily reaches some 
$x_k \in \Rgamma$ such that
\begin{equation*}
\label{eq:target_gamma}
\norm{\grad f(x_k)} \leq \varepsilon_g
\quad\text{and} \quad \lambda_{\mathrm{min}}(\Hess f(x_k)) \ge \gamma.
\end{equation*}
This ensures that the algorithm finds an approximate minimizer, which is not guaranteed in the general nonconvex case.  In that sense, the
strict saddle property allows to obtain stronger guarantees of optimality and improved complexity bounds.

On the other hand, if $\varepsilong \geq \alpha$ or $\varepsilonH\geq \beta$, an $(\varepsilong,\varepsilonH)$-critical point~\eqref{eq:target} need not belong to $\Rgamma$. The following table indicates the possible regions in which Algorithm~\ref{algo:exact_RTR} can terminate depending on the values of $\varepsilon_g$ and $\varepsilon_H$:
\begin{center}
\begin{tabu}{|c|c|c|}
\hline
Convergence of Algorithm~\ref{algo:exact_RTR} & $\alpha > \varepsilon_g$ & $\alpha \leq \varepsilon_g$ \\
\hline
$\beta > \varepsilon_H$ & $\Rgamma$  &  $\Ralpha \cup \Rgamma$  \\
\hline
$\beta\leq \varepsilon_H$ &  $\Rbeta \cup \Rgamma$ &  $\Ralpha \cup \Rbeta \cup \Rgamma$  \\
\hline
\end{tabu}
\end{center}

When the strict saddle parameters $\alpha$ and $\beta$ are known, one can always select
$\varepsilong$ and $\varepsilonH$ to ensure that the method reaches an iterate in $\Rgamma$.
In that case, the value of $\varepsilong$ only affect the complexity through
a logarithmic factor, and $\varepsilonH$ is irrelevant.

To end this section, we apply our complexity result to the examples of strict saddle functions from
Section~\ref{sec:background}.

\begin{example}
\label{ex:wccstrcvx}
As a continuation of Example~\ref{ex:strcvx}, let $f:\R^n \to \R$ be a $\gamma$-strongly convex function, choose $\alpha=1$ so that $f$ is $(1,1,\gamma,\tfrac{2}{\gamma})$-strict
	saddle, and let $\varepsilon_g \in (0,1)$. Then, by Theorem~\ref{thm:complexity_exact}, Algorithm~\ref{algo:exact_RTR}
	computes an iterate such that $\|\nabla f(x_k)\| \le \varepsilon_g$ in at most
	$\calO(\gamma^{-3}) +\log\log(\gamma^2\varepsilon_g^{-1})$ iterations. In comparison, a standard analysis
	of Newton's method with Armijo backtracking linesearch gives at most
	$\calO(\gamma^{-5}) + \log \log (\gamma^3 \varepsilon_g^{-1})$ iterations to find such a point~\citep{boyd2004}.
	Although our bound has a better dependency on $\gamma$, we believe that this is an artefact of
	the line-search analysis, which could be improved by changing the line-search condition.
\end{example}

\begin{example}
\label{ex:wccrayleigh}
As a continuation of Example~\ref{ex:rayleigh}, let 	$f: \mathbb{S}^{n-1} \to \R$ be defined by $f(x)=x^\T A x$, where $A \in \Rnn$ is a symmetric
	matrix with eigenvalues $\lambda_1 > \lambda_2 \ge \dots \ge \lambda_{n-1} > \lambda_n$.
	Then, by Theorem~\ref{thm:complexity_exact}, Algorithm~\ref{algo:exact_RTR}
	computes an iterate satisfying~\eqref{eq:target} in at most
	\begin{align*}
		\calO\left( \max\!\left(1,\dfrac{\lambda_{1}^{2}}{(\lambda_{n-1}-\lambda_n)^{2}}, \dfrac{\lambda_{1}^{4/3}}{(\lambda_{n-1}-\lambda_n)^{7/3}},\dfrac{\lambda_{1}^{2/3}}{(\lambda_{n-1}-\lambda_n)^{8/3}},\dfrac{1}{ (\lambda_{n-1}-\lambda_n)^{3}}, \dfrac{\lambda_{1}}{(\lambda_{n-1}-\lambda_n)^{3}}\right)\! \right)\\
		  + \calO\left( \log\log\left( (\lambda_{n-1}-\lambda_n)^{2}\varepsilon_g^{-1}\right) \right)
	\end{align*}
	iterations.
\end{example}

%%%%%%%%%%%%%%%%%%%%%%%%%%%%%%%%%%%%%%%%%%%%%%%%%%%%%%%%%%%%%%%%%%%%%%%%%%%%%%%%%%%%%%%%%%%%%%%%%%%
\section{Riemannian trust-region method with inexact subproblem minimization}
\label{sec:inexact}
%%%%%%%%%%%%%%%%%%%%%%%%%%%%%%%%%%%%%%%%%%%%%%%%%%%%%%%%%%%%%%%%%%%%%%%%%%%%%%%%%%%%%%%%%%%%%%%%%%%

In this section, we design an inexact variant of the Riemannian trust-region algorithm, that is tailored to
strict saddle functions. In each region of $\M$, some landscape-aware step is appropriate and ensures good convergence rates: gradient-like steps in $\Ralpha$, negative curvature steps in $\Rbeta$, and (regularized) Newton steps in $\Rgamma$. Our goal is to compute these steps approximately, without computing the entire spectrum of the Hessian to determine the region of the current point. The natural choice for this is the well-known truncated conjugate
gradient algorithm~\citep{toint1981towards,steihaug1983conjugate}, and its recent adaptations that yield second-order complexity guarantees~\citep{curtis2021trust}. We make minimial adjustements to the standard truncated conjugate gradient (tCG), in order to leverage the strict saddle structure and ensure convergence to second-order critical points. In a departure
from the exact setting, we explicitly use the strict saddle parameters $\alpha, \beta,\gamma$ in the inexact algorithm. This idea appears in a recent proposal
for nonconvex matrix factorization problems, which satisfy a different strict saddle 
property~\citep{oneill2023linesearch}.

%In this section, we design an inexact variant of the Riemannian trust-region algorithm tailored to
%strict saddle functions. Our approach builds on the well-known truncated conjugate
%gradient paradigm~\citep{toint1981towards,steihaug1983conjugate} as well as recent adaptations of
%this method that yield second-order complexity guarantees~\citep{curtis2021trust}. In a departure
%from the exact setting, we now require knowledge of the strict saddle parameters in order to identify
%which type of step is best suited for a given region. This paradigm resembles that of recent proposals
%for nonconvex matrix factorization problems using a different strict saddle 
%property~\citep{oneill2023linesearch}.

%%%%%%%%%%%%%%%%%%%%%%%%%%%%%%%%%%%%%%%%%%%%%%%%%%%%%%%%%%%%%%%%%%%%%%%%%%%%%%%%%%%%%%%%%%%%%%%%%%%
\subsection{Inexact algorithm and subroutines}
\label{ssec:inexalgos}

Recall that, at every iteration $k$, the trust-region subproblem is given by
\begin{equation}
\label{eq:inex_subpb}
	\min_{s\in \T_{x_k}\M} 
	m_k(s)	\text{ subject to } \norm{s}\leq \Delta_k,
\end{equation}
where $m_k$ is the quadratic model defined in~\eqref{eq:tr_model}. For nonzero $g_k$, we apply a truncated conjugate gradient method to find an approximate solution 
of the subproblem. Our variant of truncated conjugate gradient, described in Algorithm~\ref{algo:tCG}, 
is a Riemannian adaptation of~\citep{curtis2021trust}---a nonconvex trust-region method with 
complexity guarantees; which we further adapt to strict saddle problems.
\begin{algorithm}
\caption{Truncated Conjugate Gradient (tCG) for subproblem~\eqref{eq:inex_subpb}}
\label{algo:tCG}
\begin{algorithmic}
\State \textbf{Input:} Nonzero gradient $g_k$, Hessian matrix $H_k$, trust-region radius 
$\Delta_k$, accuracy parameter $\zeta \in (0,1)$, bound $\kappa_H \in [\|H_k\|,\infty)$, 
strict saddle parameter $\gamma>0$.
\State \textbf{Output:} trial step $s$ and flag \texttt{outCG} indicating termination type
\hrule
\vspace{1mm}
\State Define $\kmax=\min \left\lbrace n, \onehalf \sqrt{\dfrac{\lmax}{\gamma}} \ln \left(\dfrac{2\sqrt{\lmax}}{\zeta\sqrt{\gamma}} \max\left(\varepsilong^{-2}, \varepsilong\inv,\lmax/\gamma\right)\right)\right\rbrace$.
%\State Set $k_{max} \leftarrow	$ Equation~\eqref{eq:kmax} % \min\{n, \dfrac{1}{2}\sqrt{\kappa}\ln(4\kappa^{3/2}/\zeta)\}$ where $\kappa \leftarrow (M+2\gamma)/2$.
\State Set $y_0 =0$, $r_0 = g_k$, $p_0 = -g_k$, $j = 0$.
\While{$j<\kmax$}
\If{$\inner{y_j}{H_k  y_j} < \gamma \norm{y_j}^2$}
\State Set $d=\Delta_k y_j/\norm{y_j}$ and terminate with \texttt{outCG = not\_strongly\_convex}
\EndIf
\If{$\inner{p_j}{H_k p_j} < \gamma \norm{p_j}^2$}
\State Set $d=\Delta_k p_j/\norm{p_j}$ and terminate with \texttt{outCG = not\_strongly\_convex}
\EndIf
\State $\sigma_j \leftarrow \norm{r_j}^2/\inner{p_j}{H_k p_j}$ \Comment{Begin standard tCG procedure}
\State $y_{j+1} = y_j + \sigma_j p_j$
\If{$\norm{y_{j+1}} \geq \Delta_k$}
\State Compute $\bar{\sigma}_j\geq 0$ such that $\norm{y_j + \bar{\sigma}_j p_j}=\Delta_k$
\State \Return $s\leftarrow y_j + \bar{\sigma}_j p_j$ and \texttt{outCG = boundary\_step}
\EndIf
\State $r_{j+1} \leftarrow r_j + \sigma_j H_k  p_j$
\If{condition~\eqref{eq:tcg_small_residual} holds}
\State \Return $s\leftarrow y_{j+1}$ and \texttt{outCG = small\_residual}
\EndIf
\State $\tau_{j+1} \leftarrow \norm{r_{j+1}}^2/\norm{r_j}^2$
\State $p_{j+1} \leftarrow -r_{j+1} + \tau_{j+1}p_j$\Comment{end standard tCG procedure}
\State $j\leftarrow j+1$
\EndWhile
\State \Return $s\leftarrow y_{k_{max}}$ and \texttt{outCG = max\_iter}
\end{algorithmic}
\end{algorithm}

The main differences between Algorithm~\ref{algo:tCG} and~\citep[Algorithm 3.1]{curtis2021trust}
lie in the tolerance on the curvature and the stopping criterion. For the former, we use the strict
saddle constant $\gamma$ (the strong convexity constant in $\Rgamma$) instead of an arbitrary tolerance on the smallest eigenvalue of the Hessian. Throughout the 
iterations of Algorithm~\ref{algo:tCG}, we monitor the curvature of the Hessian along the directions generated by CG. Any curvature less than $\gamma$ indicates that the current iterate does not belong 
to $\Rgamma$, and triggers termination of tCG.
We also strengthen the stopping criterion of tCG from~\citep{curtis2021trust}, and use
\begin{equation}\label{eq:tcg_small_residual}
	\norm{r_{j+1}}
	\le 
	\zeta \min\!\left(\norm{g_k}^{2},
	\norm{g_k},
	\gamma \norm{y_{j+1}}\right),
\end{equation}
where  $r_{j} = \nabla m_k(y_j)$ is the residual of the CG algorithm
after $j$ iterations, and $\zeta \in (0,1)$. The term $\norm{g_k}^2$ in~\eqref{eq:tcg_small_residual} ensures a local quadratic convergence, as we show in Section~\ref{ssec:wccinexact}.

The remaining terms on the right-hand side of~\eqref{eq:tcg_small_residual} are used 
to certify a good decrease when the current iterate belongs to $\Ralpha$ or $\Rgamma$. Importantly,
when $H_k$ is positive definite, the residual condition~\eqref{eq:tcg_small_residual} 
is satisfied in $\min\!\left(n,\calO\left(\gamma^{-1/2}\right)\right)$ iterations. This property is intrinsic to the conjugate 
gradient algorithm.

\begin{lemma}%[Lemma 11 in \citet{royer2018complexity}]
\label{le:tcgits}
	Suppose that we apply the conjugate gradient algorithm to $g_k \in \T_{x_k}\M$ with 
	$\norm{g_k}>\varepsilon_g$ and 
	$H_k$ such that $ \gamma \Id \preceq H_k \preceq \lmax \Id$. Then, given a 
	tolerance $\zeta \in (0,1)$, CG finds a vector $y \in \T_{x_k}\M$ such that
	\begin{equation}\label{eq:quadratic_residual_tcg}
		\norm{H_k y + g_k} \leq  \zeta \min\!\left( \norm{g_k}^2, \norm{g_k}, \gamma \norm{y}\right)
	\end{equation}
	in at most
	\begin{equation}\label{eq:kmax}
 		\kmax := \min\!\left( n, 
 		\onehalf \sqrt{\dfrac{\lmax}{\gamma}} \ln \left(\dfrac{2\sqrt{\lmax}}{\zeta\sqrt{\gamma}} 
 		\max\left(\varepsilong^{-2},\varepsilong^{-1},\lmax/\gamma\right)\right)\right)
	\end{equation}
	iterations or, equivalently, Hessian-vector products.
\end{lemma}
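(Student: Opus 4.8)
The plan is to establish two separate bounds on the number of CG iterations needed to achieve \eqref{eq:quadratic_residual_tcg}: the trivial finite-termination bound of $n$ steps, and a bound of order $\sqrt{\lmax/\gamma}$ times a logarithmic factor coming from the classical linear convergence rate of CG on a well-conditioned system. The minimum of these two gives $\kmax$. First I would recall that since $H_k$ is positive definite, CG applied to the system $H_k y = -g_k$ terminates in at most $n$ iterations with the exact solution $y$ satisfying $H_k y + g_k = 0$, trivially meeting \eqref{eq:quadratic_residual_tcg}; this handles the $n$ term in the minimum.

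For the second term, the key is the standard bound on the $H_k$-norm of the error: if $y^\star = -H_k^{-1}g_k$ is the exact solution and $y_j$ the $j$th CG iterate (with $y_0 = 0$), then
\begin{equation*}
	\norm{y_j - y^\star}_{H_k} \le 2\left(\frac{\sqrt{\kappa}-1}{\sqrt{\kappa}+1}\right)^{j}\norm{y^\star}_{H_k},
\end{equation*}
where $\kappa = \lmax/\gamma$ is an upper bound on the condition number of $H_k$. Using $\gamma\Id \preceq H_k \preceq \lmax\Id$, one converts this into a bound on the Euclidean norm of the residual $r_j = H_k y_j + g_k$: indeed $\norm{r_j} = \norm{H_k(y_j - y^\star)} \le \sqrt{\lmax}\,\norm{y_j - y^\star}_{H_k}$ and $\norm{y^\star}_{H_k} \le \gamma^{-1/2}\norm{g_k}$, so $\norm{r_j} \le 2\sqrt{\lmax/\gamma}\,\rho^j\,\norm{g_k}$ with $\rho = (\sqrt\kappa - 1)/(\sqrt\kappa+1)$. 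Then I would force the right-hand side below $\zeta\min(\norm{g_k}^2,\norm{g_k},\gamma\norm{y_j})$; bounding the latter crudely below by $\zeta\norm{g_k}\min(\norm{g_k},1,\gamma\norm{y_j}/\norm{g_k})$ and using $\norm{g_k}>\varepsilong$ together with a lower bound on $\norm{y_j}$ (e.g.\ once $y_j$ is close to $y^\star$, $\norm{y_j}\gtrsim \norm{g_k}/\lmax$), it suffices to have $2\sqrt{\lmax/\gamma}\,\rho^j \le \zeta\min(\varepsilong,\varepsilong^{2},\gamma/\lmax)$ up to constants, i.e.\ $\rho^{j} \le \tfrac{\zeta}{2}\sqrt{\gamma/\lmax}\,\min(\varepsilong^{2},\varepsilong,\gamma/\lmax)^{-1,\text{-like term}}$ — matching the argument of the logarithm in \eqref{eq:kmax}. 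Taking logarithms and using the elementary inequality $\ln(1/\rho) \ge 2/\sqrt{\kappa}$ (valid since $\rho = 1 - 2/(\sqrt\kappa+1)$ and $\sqrt\kappa \ge 1$) yields $j \le \tfrac12\sqrt{\lmax/\gamma}\,\ln\!\left(\tfrac{2\sqrt{\lmax}}{\zeta\sqrt{\gamma}}\max(\varepsilong^{-2},\varepsilong^{-1},\lmax/\gamma)\right)$, which is exactly the second term in $\kmax$.

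I expect the main obstacle to be handling the $\gamma\norm{y_j}$ term on the right-hand side of the residual condition cleanly: unlike the constant terms $\norm{g_k}^2$ and $\norm{g_k}$, this term depends on the current iterate, so one must argue that $\norm{y_j}$ does not collapse to zero before the residual is small. The cleanest route is to observe that $\norm{y_j - y^\star}_{H_k}$ is monotonically decreasing, so once $\norm{y_j - y^\star}_{H_k} \le \tfrac12\norm{y^\star}_{H_k}$ we get $\norm{y_j}_{H_k} \ge \tfrac12\norm{y^\star}_{H_k}$, hence $\norm{y_j}\ge \tfrac{1}{2\sqrt{\lmax}}\norm{y^\star}_{H_k}\ge \tfrac{1}{2\lmax}\norm{g_k}$; this crossover happens within $O(\sqrt\kappa \ln 1)=O(\sqrt\kappa)$ iterations, which is absorbed into the logarithmic factor (using $\max(\cdot)\ge \lmax/\gamma$). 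After that point $\gamma\norm{y_j}\ge \tfrac{\gamma}{2\lmax}\norm{g_k}$, so the $\min$ is controlled and the argument above goes through. A secondary subtlety is that Algorithm~\ref{algo:tCG} may terminate earlier via \texttt{boundary\_step} or a curvature test, but under the hypothesis $\gamma\Id \preceq H_k$ the curvature tests never trigger, and a boundary step only occurs if $\norm{y_{j+1}}\ge\Delta_k$, in which case the returned $s$ is on the boundary — here I would simply note (as in \citep{curtis2021trust}) that the lemma statement concerns CG iterations/Hessian-vector products and the count \eqref{eq:kmax} bounds the number performed regardless of which exit is taken.
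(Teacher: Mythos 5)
Your proof is correct and follows essentially the same route as the paper: both rest on the linear decay of the CG residual, $\norm{H_k y_j + g_k}\le 2\sqrt{\lmax/\gamma}\,\rho^j\norm{g_k}$ with $\rho=(\sqrt{\lmax/\gamma}-1)/(\sqrt{\lmax/\gamma}+1)$, plus $n$-step finite termination --- the paper simply cites this inequality from \citep[Lemma 11]{royer2018complexity}, whereas you re-derive it from the classical energy-norm bound and spell out how the three terms in the $\min$ of \eqref{eq:quadratic_residual_tcg} are handled. The only quibble is cosmetic: your crossover argument yields $\norm{y_j}\ge \norm{g_k}/(2\lmax)$, which loses a factor $2$ against the exact constant in \eqref{eq:kmax}; using instead that $\norm{y_j}$ is nondecreasing along CG with $\norm{y_1}\ge\norm{g_k}/\lmax$ recovers the stated constant, and the inequality $\ln(1/\rho)\ge 2\sqrt{\gamma/\lmax}$ needs $\ln\frac{1+t}{1-t}\ge 2t$ rather than just $-\ln(1-x)\ge x$.
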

\begin{proof}
	The result follows from~\citep[Lemma 11]{royer2018complexity}, and in particular the inequality
	\begin{equation*}
		\left\| H_k y_j + g_k \right\| \le 2 \sqrt{\dfrac{\lmax}{\gamma}} 
		\left(\frac{\sqrt{\lmax/\gamma}-1}{\sqrt{\lmax/\gamma}+1} \right)^j \norm{g_k}
	\end{equation*}
	that holds for any $j \le n$. The model is also minimized in at most $n$ steps, $\left\|H_k y_n + g_k\right\|=0$.
% with a minor adjustment to account for the squared gradient norm in~\eqref{eq:quadratic_residual_tcg}.
\end{proof} 
Lemma~\ref{le:tcgits} implies that one can use a cap on the number of iterations, 
along with checks on the curvature of $H_k$, to monitor the convergence of tCG. In particular, if $\lambda_{\min}(H_k)\geq \gamma$, then Algorithm~\ref{algo:tCG} satisfies~\eqref{eq:tcg_small_residual} in $\kmax$ iterations, which may be smaller than $n$ 
in large dimensions. When this is not the case, and that $x_k \notin \Ralpha$, the Hessian is guaranteed to have negative curvature, and we compute (an approximation of) the smallest eigenvalue. 

To this end, we rely on a minimum eigenvalue oracle (MEO), that either computes a direction of 
sufficient negative curvature, or certifies that such direction does not 
exist~\citep{royer2020newton,curtis2021trust}. One possible implementation of this procedure 
consists in constructing a full eigenvalue decomposition of the Hessian, which deterministically 
guarantees the desired outcome but requires access to the entire Hessian matrix (or, equivalently, 
$n$ Hessian-vector products). Cheaper variants rely on Krylov subspace methods, such as Lanczos' 
method, that provide the desired guarantee with high probability using 
potentially less than $n$ Hessian-vector products~\citep[Appendix B]{royer2020newton}. For the 
sake of generality, we describe the MEO as a probabilistic method in Algorithm~\ref{algo:MEO}. Similarly to Algorithm~\ref{algo:tCG}, a key 
difference with previous minimum eigenvalue oracles is that the strict saddle constant $\beta$ 
(associated with the region of negative curvature $\Rbeta$) replaces an a priori optimality 
tolerance on the minimum eigenvalue of the Hessian.

\begin{algorithm}
\caption{Minimum eigenvalue oracle (MEO)}\label{algo:MEO}
\begin{algorithmic}
	\State \textbf{Inputs:} Matrix $H_k$, trust-region radius $\Delta_k$, 
	failure probability tolerance $p\in (0,1)$, bound $\kappa_H \in [\norm{H},\infty)$, 
	strict saddle parameter $\beta$.
	\State \textbf{Outputs:} Either a certificate that $\lambda_{\min}(H_k) \ge -\beta$ valid with 
	probability at least $1-p$, or a vector $s \in \T_{x_k}\M$ such that
	\begin{equation}
	\label{eq:meostep}
		\inner{g_k}{s} \le 0, 
		\quad 
		\inner{s}{H_k s} \le -\dfrac{1}{2}\beta \norm{s}^2, 
		\quad \text{and} \quad \norm{s}= \Delta_k.
	\end{equation}
\end{algorithmic}
\end{algorithm}

Our inexact trust-region method (Algorithm~\ref{algo:inexact_RTR}) combines tCG (Algorithm~\ref{algo:tCG}) and the MEO (Algorithm~\ref{algo:MEO}). We first attempt to use tCG to solve 
subproblem~\eqref{eq:inex_subpb} approximately. If the current iterate has a large enough gradient ($x_k \in \Ralpha$) 
or tCG hits the boundary of the trust region, we use the step given by tCG. Otherwise, we call the MEO to estimate the minimum eigenvalue of the Hessian. If the MEO finds a 
direction of sufficient negative curvature, we use it as an approximate model minimizer. Otherwise, we proceed 
with the tCG step. 

\begin{algorithm}
\caption{Strict saddle RTR with inexact subproblem minimization}\label{algo:inexact_RTR}
\begin{algorithmic}[1]
	\State \textbf{Inputs:} Initial point $x_0\in \M$,
	initial and maximal trust-region radii $0<\Delta_0<\bar\Delta$,
	constants $0<\eta_1<\eta_2<1$ and $0<\tau_1<1<\tau_2$, failure probability tolerance 
	$p \in [0,1)$, strict saddle parameters
	$(\alpha,\beta,\gamma)$.% tolerances $0<\varepsilon_g \le \alpha$ and $0<\varepsilon_H$.
	\For{$ k = 1, 2, \dots $}
		\If{$\norm{g_k}>0$}
			\State Call Algorithm~\ref{algo:tCG} on the subproblem~\eqref{eq:inex_subpb}
			to obtain $s_k^{CG}$ and \texttt{outCG}.
		\EndIf
		\If{$\norm{g_k} \ge \alpha$ or \texttt{outCG=boundary\_step}}
			\State Set $s_k=s_k^{CG}$.
		\ElsIf{$\norm{g_k}=0$ or  (\texttt{outCG} $\in$ \texttt{\{max\_iter,not\_strongly\_convex,small\_residual\}} and $\norm{g_k}< \alpha$)}.
			\State Call Algorithm~\ref{algo:MEO} on the Hessian $H_k$.
			\If{Algorithm~\ref{algo:MEO} certifies that $\lambda_{\min}(H_k) > -\beta$}
				\State Set $s_k=s_k^{CG}$ if $\norm{g_k}>0$, otherwise terminate and return $x_k$.
			\Else 
				\State Set $s_k=s_k^{MEO}$, where $s_k^{MEO}$ is the output of Algorithm~\ref{algo:MEO}.
			\EndIf
		\EndIf
%		\If{ $\norm{g_k}=0$ or  (\texttt{outCG} $\in$ \texttt{\{max\_iter,not\_strongly\_convex,small\_residual\}} and $\norm{g_k}< \alpha$)}
%			\State Call Algorithm~\ref{algo:MEO} on the Hessian $H_k$.
%			\If{Algorithm~\ref{algo:MEO} certifies that $\lambda_{\min}(H_k) > -\beta$}
%				\State Set $s_k=s_k^{CG}$ if $\norm{g_k}>0$, otherwise terminate and return $x_k$.
%			\Else 
%				\State Set $s_k=s_k^{MEO}$, where $s_k^{MEO}$ is the output of Algorithm~\ref{algo:MEO}.
%			\EndIf
%		\Else \Comment{$\norm{g_k} \ge \alpha$ or \texttt{outCG=boundary\_step}}.
%			\State Set $s_k=s_k^{CG}$.
%		\EndIf
		\State Compute $\rho_k = \dfrac{f(x_k) - f\left(R_{x_k}(s_k)\right)}{m_k(0) - m_k(s_k)}$ and
		set $x_{k+1} =
				\left\lbrace
					\begin{aligned}
					& R_k(x_k) & \text{ if } \rho_k \ge \eta_1\\
					& x_k & \text{ otherwise.}
					\end{aligned}
				\right.$\\
		\State Set $\Delta_{k+1} =
			\left\lbrace
				\begin{aligned}
					&\min\left(\tau_2\Delta_k,\bar \Delta\right) & \text{ if }   \rho_k >  \eta_2
					&&\text{ [very successful] }\\
					& \Delta_k & \text{ if }   \eta_2 \ge \rho_k \ge  \eta_1
					&&\text{ [successful] }\\
					& \tau_1\Delta_k & \text{ otherwise.}
					&&\text{ [unsuccessful] }
				\end{aligned}
			\right.$
	\EndFor
\end{algorithmic}
\end{algorithm}

Note that when $g_k=0$, the trust-region method calls Algorithm~\ref{algo:MEO} directly, 
and terminates if it certifies that $\lambda_{\min}(H_k) \ge -\beta$, since this implies 
$x_k \in \Rgamma$ is a local minimum. This corner case is not central to our complexity 
analysis.

For the rest of Section~\ref{sec:inexact}, we consider the following assumption on the MEO (Algorithm~\ref{algo:inexact_RTR}).

\begin{assumption}
\label{assu:iter_meo}
	For any iteration of Algorithm~\ref{algo:inexact_RTR}, if Algorithm~\ref{algo:MEO} is called 
	during this iteration, it outputs the correct answer, i.e., a certificate that 
	$\lambda_{\min}(H_k)>-\beta$ 
	or a step of curvature less than $-\tfrac{\beta}{2}$, in at most 
	\begin{equation}\label{eq:iter_meo}
		\Nmeo:=\min\!\left(n, 1+ \left\lceil \dfrac{1}{2}\ln(2.75 n/p^2) 
		\sqrt{\dfrac{\kappa_H}{\beta}} \right\rceil \right)
	\end{equation}
	iterations, with probability at least $1-p$.
\end{assumption}
Computing a full eigenvalue decomposition for Algorithm~\ref{algo:MEO} ensures 
that~\aref{assu:iter_meo} holds for any $p\ge 0$, with $\Nmeo=n$. The bound~\eqref{eq:iter_meo} 
also applies to Krylov subspace methods, such as Lanczos' method with an initial vector uniformaly distributed on the unit sphere~\citep{royer2020newton}. 

%%%%%%%%%%%%%%%%%%%%%%%%%%%%%%%%%%%%%%%%%%%%%%%%%%%%%%%%%%%%%%%%%%%%%%%%%%%%%%%%%%%%%%%%%%%%%%%%%%%
\subsection{Properties of inexact steps}
\label{ssec:inexlemmas}

In this section, we provide several decrease lemmas for the steps produced by 
Algorithms~\ref{algo:tCG} and~\ref{algo:MEO}. Our proof technique follows earlier works on Euclidean trust-region methods for general nonconvex functions~\citep{curtis2021trust}.

We first consider iterations of Algorithm~\ref{algo:tCG} where $x_k\in \Ralpha$ (large gradient norm).

\begin{lemma}%[Region Ralpha]
\label{le:tcggradalpha}
	Under~\aref{assu:ss_R3} and~\aref{assu:bounded_hessian}, 
	consider the $k$th iteration of Algorithm~\ref{algo:inexact_RTR}. Suppose that 
	$\norm{g_k}\ge \alpha$, so that Algorithm~\ref{algo:tCG} is called. Suppose that it outputs 
	$s_k$ with \texttt{outCG $\neq$ boundary\_step}. Then, we have
	\begin{equation}
	\label{eq:tcggradalpha}
		m_k(0)-m_k(s_k) 
		\geq 
		\dfrac{1}{2}\min \left(\Delta_k, \dfrac{\alpha}{\cH}\right)\alpha.
	\end{equation}
\end{lemma}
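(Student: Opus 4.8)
The plan is to compare the model decrease produced by \texttt{tCG} with the one achieved at the Cauchy point. Let $s_k^{C}$ denote the Cauchy point of~\eqref{eq:inex_subpb}, defined exactly as in the proof of Lemma~\ref{lemma:cauchy_decrease}, so that $m_k(0)-m_k(s_k^{C})\ge\tfrac12\min\!\left(\Delta_k,\norm{g_k}/\cH\right)\norm{g_k}$ by~\citep[Lemma 6.15]{boumal2023}; since $t\mapsto\min(\Delta_k,t/\cH)\,t$ is nondecreasing and $\norm{g_k}\ge\alpha$, the right-hand side is at least $\tfrac12\min(\Delta_k,\alpha/\cH)\alpha$. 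It therefore suffices to show that the step $s_k$ returned by Algorithm~\ref{algo:tCG} satisfies $m_k(0)-m_k(s_k)\ge m_k(0)-m_k(s_k^{C})$, or at least the same lower bound, in each of the three admissible termination modes \texttt{small\_residual}, \texttt{max\_iter} and \texttt{not\_strongly\_convex} (the mode \texttt{boundary\_step} being excluded by hypothesis).

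I would first dispatch the modes \texttt{small\_residual} and \texttt{max\_iter}, for which the output is a genuine interior CG iterate $y_j$ with $j\ge 1$. In these modes the curvature test on $p_0=-g_k$ did not trigger, hence $\inner{g_k}{H_k g_k}\ge\gamma\norm{g_k}^2>0$; consequently the first CG step $y_1=-\bigl(\norm{g_k}^2/\inner{g_k}{H_k g_k}\bigr)g_k$ is the unconstrained minimizer of $m_k$ along $-g_k$, and since the trust-region boundary was never reached we have $\norm{y_1}<\Delta_k$, so $y_1$ coincides with $s_k^{C}$. Because each performed CG step strictly decreases the model ($m_k(y_{i+1})<m_k(y_i)$, as $\inner{p_i}{H_k p_i}\ge\gamma\norm{p_i}^2>0$ along those iterations), we get $m_k(s_k)=m_k(y_j)\le m_k(y_1)=m_k(s_k^{C})$, which yields~\eqref{eq:tcggradalpha}.

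It remains to handle \texttt{not\_strongly\_convex}, which I expect to be the main obstacle. Here $s_k=d=\Delta_k v/\norm{v}$ for some $v\in\{y_j,p_j\}$ with $\inner{v}{H_k v}<\gamma\norm{v}^2$ and $\norm{d}=\Delta_k$. I would estimate $m_k(0)-m_k(d)$ directly along the ray $t\mapsto t\,v/\norm{v}$, exploiting that the curvature of $m_k$ along $v$ is at most $\min(\gamma,\cH)$ (from the triggering inequality together with $\inner{v}{H_k v}\le\norm{H_k}\norm{v}^2\le\cH\norm{v}^2$), and, for the linear term, that $\inner{p_j}{g_k}\le 0$ (in particular $\inner{p_0}{g_k}=-\norm{g_k}^2$) and, when $v=y_j$ with $j\ge 1$, the conjugate-gradient orthogonality $\inner{y_j}{r_j}=0$ with $r_j=\nabla m_k(y_j)$, which pins the vertex of the one-dimensional model to $t=\norm{y_j}$ and lets one combine the ray estimate with $m_k(y_j)\le m_k(y_1)=m_k(s_k^{C})$ established above. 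The subtlety here, absent from the Euclidean shifted-Hessian analysis of~\citep{curtis2021trust}, is that Algorithm~\ref{algo:tCG} runs CG on $H_k$ itself against the \emph{positive} threshold $\gamma$, so the direction $v$ returned on termination may carry a small positive curvature while the step is nonetheless pushed all the way to $\norm{s_k}=\Delta_k$; the crux is to verify that this boundary extension does not destroy the Cauchy-type decrease, which is precisely where the curvature cap $\gamma\le\cH$ and the CG identities above are essential. Once this case is settled, assembling the three modes and invoking $\norm{g_k}\ge\alpha$ gives~\eqref{eq:tcggradalpha}.
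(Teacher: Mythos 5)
Your treatment of the \texttt{small\_residual} and \texttt{max\_iter} terminations is correct and is essentially the paper's argument: the Cauchy point coincides with the first tCG iterate $y_1$ (interior, since \texttt{boundary\_step} is excluded), and the model is non-increasing along CG iterates because every executed step has $\inner{p_i}{H_k p_i}\ge\gamma\norm{p_i}^2>0$. The gap is the \texttt{not\_strongly\_convex} case, which you explicitly leave open (``once this case is settled\dots''), and it is not a routine verification that your ray estimate can close. If the curvature test triggers on $p_0=-g_k$ with Rayleigh quotient $c=\inner{g_k}{H_k g_k}/\norm{g_k}^2\in(0,\gamma)$, the returned step is $s_k=-\Delta_k g_k/\norm{g_k}$ and the exact model change is $m_k(0)-m_k(s_k)=\Delta_k\norm{g_k}-\tfrac{c}{2}\Delta_k^2$, which is \emph{negative} whenever $\Delta_k>2\norm{g_k}/c$; for instance $\cH=1$, $\gamma=0.5$, $c=0.4$, $\norm{g_k}=\alpha=0.1$, $\Delta_k=1$ gives $-0.1$, far below the required $\tfrac12\min(\Delta_k,\alpha/\cH)\alpha=0.005$. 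The same overshoot occurs when the test triggers on $y_j$ or $p_j$ with $j\ge 1$ and Rayleigh quotient in $(0,\gamma)$: the returned vector is the rescaled direction $\Delta_k v/\norm{v}$, not a continuation from $y_j$, so neither the sign of $\inner{g_k}{p_j}$ nor the cap $\gamma\le\cH$ controls the quadratic term once the step is pushed past the one-dimensional minimizer. Hence the obstacle you flag is real: no estimate ``along the ray'' can produce~\eqref{eq:tcggradalpha} in that termination mode.

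For comparison, the paper's proof is a one-liner: it asserts that the tCG output decreases the model at least as much as the Cauchy step because the Cauchy step is the first tCG iterate. That assertion is exactly what you proved for the two interior terminations (and it also holds for \texttt{boundary\_step}, excluded here), but it is not justified for the rescaled \texttt{not\_strongly\_convex} output, where the computation above shows the claimed dominance can fail as Algorithm~\ref{algo:tCG} is written. So your instinct that this is ``the main obstacle'' is sound, but your proposal does not resolve it, and it cannot be resolved along the lines you sketch; closing it would require excluding that termination mode from the statement, modifying Algorithm~\ref{algo:tCG} so that a triggering direction with small positive curvature is not pushed blindly to the boundary (e.g.\ safeguarding the returned step against the Cauchy point), or otherwise restricting $\Delta_k$ so that the overshoot cannot occur.
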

\begin{proof}
	By assumption, the step generated by tCG is taken, and it guarantees at 
	least as much model decrease as the Cauchy 
	step $s_k^C$, defined in the proof of Lemma~\ref{lemma:cauchy_decrease}. Indeed, the Cauchy step 
	corresponds to the first iterate of tCG~\citep{boumal2023,conn2000trust}. 
	Applying Lemma~\ref{lemma:cauchy_decrease} gives
	\begin{equation*}
		m_k(0)-m_k(s_k) 
		\ge m_k(0)-m_k(s^C_k) 
		\ge \frac{1}{2}\min\left(\Delta_k,\dfrac{\alpha}{\cH}\right)\alpha.\qedhere
	\end{equation*}
\end{proof}
The following lemma gives a model decrease when $x_k\in \Rbeta$ and the MEO is called. 
\begin{lemma}%[Not convex tCG]
\label{le:meodecrease}
	Under~\aref{assu:second_order_retraction} and \aref{assu:ss_R3},
	consider the $k$th iteration of Algorithm~\ref{algo:inexact_RTR}. Suppose that 
	Algorithm~\ref{algo:MEO} is called with $x_k \in \Rbeta$. Then,the method outputs a vector 
	$s_k \in \T_{x_k}\M$ satisfying
	\begin{equation}
	\label{eq:meodecrease}
		m_k(0)-m_k(s_k) 
		\geq 
		\dfrac{1}{4} \beta \Delta_k^2 .
	\end{equation}
	with probability at least $1-p$.
\end{lemma}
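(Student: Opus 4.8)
The plan is to show that membership of $x_k$ in $\Rbeta$ prevents Algorithm~\ref{algo:MEO} from issuing a (correct) certificate, so that with probability at least $1-p$ it must instead return a direction of sufficient negative curvature, and then to substitute the three defining properties of such a step into the model decrease. This mirrors the treatment of negative-curvature iterations in the Euclidean analysis, the only new ingredient being the translation of the strict saddle region into a spectral condition on $H_k$.

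First I would invoke~\aref{assu:second_order_retraction}: by~\eqref{eq:Hess2ndordR} we have $H_k = \Hess f(x_k)$, so $x_k \in \Rbeta$ (a region that is well-defined under~\aref{assu:ss_R3}) means $\lambda_{\min}(H_k) \le -\beta$. Hence the certificate ``$\lambda_{\min}(H_k) > -\beta$'' that Algorithm~\ref{algo:MEO} may output is false. By~\aref{assu:iter_meo}, with probability at least $1-p$ the oracle returns the correct answer within $\Nmeo$ iterations; since the certificate is not the correct answer, the correct answer must be a vector $s_k^{MEO} \in \T_{x_k}\M$ satisfying~\eqref{eq:meostep}, that is, $\inner{g_k}{s_k^{MEO}} \le 0$, $\inner{s_k^{MEO}}{H_k s_k^{MEO}} \le -\onehalf \beta \norm{s_k^{MEO}}^2$, and $\norm{s_k^{MEO}} = \Delta_k$. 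Inspecting Algorithm~\ref{algo:inexact_RTR}, whenever the oracle does not certify $\lambda_{\min}(H_k) > -\beta$ the step retained at iteration $k$ is exactly $s_k = s_k^{MEO}$, so this vector is the step produced by the method.

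The remaining step is a direct computation with the model~\eqref{eq:tr_model}:
\[
	m_k(0) - m_k(s_k)
	= -\inner{g_k}{s_k} - \onehalf \inner{s_k}{H_k s_k}
	\geq \dfrac{1}{4}\beta \norm{s_k}^2
	= \dfrac{1}{4}\beta \Delta_k^2,
\]
where the inequality uses $\inner{g_k}{s_k}\le 0$ and $\inner{s_k}{H_k s_k} \le -\onehalf\beta\norm{s_k}^2$, and the last equality uses $\norm{s_k} = \Delta_k$; this is~\eqref{eq:meodecrease}. I do not expect a real obstacle: the only point warranting care is the boundary case $\lambda_{\min}(H_k) = -\beta$, where one observes that the certificate in~\aref{assu:iter_meo} is the \emph{strict} inequality $\lambda_{\min}(H_k) > -\beta$, so it is still incorrect and the oracle is still forced (with probability at least $1-p$) to return a step obeying~\eqref{eq:meostep}. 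It is also worth noting that all three bounds in~\eqref{eq:meostep} are genuinely used in the final estimate.
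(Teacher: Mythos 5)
Your proposal is correct and follows essentially the same route as the paper: translate $x_k \in \Rbeta$ into $\lambda_{\min}(H_k) \le -\beta$ (using the second-order retraction so that $H_k = \Hess f(x_k)$), conclude that with probability at least $1-p$ the oracle returns a step satisfying~\eqref{eq:meostep}, and plug its three properties into the model~\eqref{eq:tr_model} to get $m_k(0)-m_k(s_k) \ge -\tfrac12\inner{s_k}{H_k s_k} \ge \tfrac{\beta}{4}\Delta_k^2$. Your extra remarks (the boundary case $\lambda_{\min}(H_k)=-\beta$ and the fact that the $1-p$ guarantee stems from the oracle's specification) only make explicit what the paper leaves implicit.
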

\begin{proof}
	Since $x_k \in \Rbeta$, Algorithm~\ref{algo:MEO} is 
	called with $H_k$ such that $\lambda_{\min}(H_k) \le -\beta$. Algorithm~\ref{algo:inexact_RTR} 
	then outputs a step satisfying~\eqref{eq:meostep} with probability $1-p$. Reasoning as in the 
	proof of Lemma~\ref{lemma:eigenstep_decrease}, we bound the model decrease as 
	\begin{equation*}
		m_k(0)-m_k(s_k) \ge -\dfrac{1}{2}\inner{s_k}{H_k s_k} \ge \dfrac{\beta}{4}\|s_k\|^2 
		= \dfrac{\beta}{4}\Delta_k^2.\qedhere
	\end{equation*}	
\end{proof}
Note that Lemma~\ref{le:meodecrease} is a probabilistic result, because it depends on the minimum 
eigenvalue oracle, but this oracle is only called for small gradients.

We now consider an iteration at which tCG terminates with a small residual, which satisfies~\eqref{eq:tcg_small_residual}.

\begin{lemma}%[tCG small residual]
\label{le:tcg_small_residual}
	Under~\aref{assu:second_order_retraction} and~\aref{assu:ss_R3}, 
	consider the $k$th iteration of Algorithm~\ref{algo:inexact_RTR}. Suppose that 
	Algorithm~\ref{algo:tCG} is called and outputs $s_k$ with 
	\texttt{outCG=small\_residual}. Then,
	\begin{equation}
	\label{eq:decrease_small_residual_1}
		m_k(0)-m_k(s_k) \ge \dfrac{1}{4} \gamma\norm{s_k}^2 .
	\end{equation}
	In addition, if $\norm{s_k} \leq \smallstep$,  we have
	\begin{equation}
	\label{eq:decrease_small_residual_2}
		m_k(0)-m_k(s_k) 
%		\ge \dfrac{\gamma}{4} \norm{s_k}^2 
		\ge  \dfrac{1}{2 ( \cR^2  + 2 \hat{L}_H \cR)}  \min \!\left(\norm{\grad f\left(R_{x_k}(s_k)\right)}^2  \gamma^{-1}, \gamma^3 \right).
	\end{equation}
\end{lemma}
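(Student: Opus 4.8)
The plan is to combine the standard conjugate-gradient bookkeeping with the curvature safeguards built into Algorithm~\ref{algo:tCG}, and then, for the second bound, to feed the outcome into the retraction-related estimates of Section~\ref{ssec:localcvexact}.

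For \eqref{eq:decrease_small_residual_1} I would write $s_k=y_{j+1}$ for the tCG iterate at which the small-residual test \eqref{eq:tcg_small_residual} fires, so that $r_{j+1}=H_k s_k+g_k=\nabla m_k(s_k)$. Since the termination flag is \texttt{small\_residual} rather than \texttt{not\_strongly\_convex}, every curvature check was passed, i.e.\ $\inner{y_i}{H_k y_i}\ge\gamma\|y_i\|^2$ and $\inner{p_i}{H_k p_i}\ge\gamma\|p_i\|^2$ for $i\le j$; in particular all step lengths $\sigma_i$ are positive, and the usual CG orthogonality $r_{j+1}\perp\mathrm{span}\{r_0,\dots,r_j\}\ni s_k$ holds. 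Using $g_k=r_{j+1}-H_k s_k$ this gives
\[
 m_k(0)-m_k(s_k)=-\inner{g_k}{s_k}-\tfrac12\inner{s_k}{H_k s_k}=-\inner{r_{j+1}}{s_k}+\tfrac12\inner{s_k}{H_k s_k}=\tfrac12\inner{s_k}{H_k s_k}.
\]
It then remains to show $\inner{s_k}{H_k s_k}\ge\tfrac{\gamma}{2}\|s_k\|^2$: by $H_k$-conjugacy of the search directions and $s_k=y_j+\sigma_j p_j$ one has $\inner{s_k}{H_k s_k}=\inner{y_j}{H_k y_j}+\sigma_j^2\inner{p_j}{H_k p_j}\ge\gamma(\|y_j\|^2+\sigma_j^2\|p_j\|^2)$, while $\|s_k\|^2=\|y_j\|^2+2\sigma_j\inner{y_j}{p_j}+\sigma_j^2\|p_j\|^2\le 2(\|y_j\|^2+\sigma_j^2\|p_j\|^2)$ by Cauchy--Schwarz and $2ab\le a^2+b^2$. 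Chaining the two inequalities yields \eqref{eq:decrease_small_residual_1}; the factor-of-two slack is exactly why the constant is $\tfrac14$ rather than $\tfrac12$.

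For \eqref{eq:decrease_small_residual_2}, \eqref{eq:decrease_small_residual_1} reduces the task to lower-bounding $\|s_k\|$ in terms of $\|\grad f(R_{x_k}(s_k))\|$, or to falling back on the $\gamma^3$ term. The small-residual test gives $\|H_k s_k+g_k\|=\|r_{j+1}\|\le\zeta\gamma\|s_k\|$; Lemma~\ref{lemma:hessian_lipschtiz_grad_ineq} gives $\|\nabla\hat f_k(s_k)-g_k-H_k s_k\|\le\tfrac{\hat{L}_H}{2}\|s_k\|^2$; and, since $\|s_k\|\le\smallstep$, Lemma~\ref{lemma:bound_grad_pullback} gives $\|\grad f(R_{x_k}(s_k))\|\le\cR\|\nabla\hat f_k(s_k)\|$. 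Chaining these and using $\zeta<1$,
\[
 \|\grad f(R_{x_k}(s_k))\|\le \cR\gamma\|s_k\|+\tfrac{\cR\hat{L}_H}{2}\|s_k\|^2 .
\]
I would then split on the size of $\|s_k\|$ relative to a threshold of order $\gamma$: if $\|s_k\|$ is above it, then $m_k(0)-m_k(s_k)\ge\tfrac{\gamma}{4}\|s_k\|^2$ is already of order $\gamma^3$ and dominates the stated right-hand side; otherwise the quadratic term above is absorbed into the linear one, the displayed inequality inverts to $\|s_k\|\gtrsim\|\grad f(R_{x_k}(s_k))\|/(\cR\gamma)$, and substituting back into $m_k(0)-m_k(s_k)\ge\tfrac{\gamma}{4}\|s_k\|^2$ produces the $\|\grad f\|^2\gamma^{-1}$ term; tracking the constants through both branches yields the factor $1/(2(\cR^2+2\hat{L}_H\cR))$.

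The main obstacle is the first bound, precisely because the curvature safeguard is tested on $y_j$ and $p_j$ but never on the returned vector $y_{j+1}=s_k$; the identity $\inner{s_k}{H_k s_k}=\inner{y_j}{H_k y_j}+\sigma_j^2\inner{p_j}{H_k p_j}$ together with the elementary $2ab\le a^2+b^2$ estimate is what circumvents this. A secondary point to treat carefully is that Lemmas~\ref{lemma:hessian_lipschtiz_grad_ineq} and~\ref{lemma:bound_grad_pullback} are stated for $x_k\in\Rgamma$, so one should note that they remain available in the regime where \eqref{eq:decrease_small_residual_2} is invoked (via the same compactness argument, or because small-residual termination together with a small gradient forces $x_k\in\Rgamma$).
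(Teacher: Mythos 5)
Your proposal matches the paper's proof essentially step for step: the first bound uses the same conjugacy expansion $\inner{s_k}{H_k s_k}=\inner{y_j}{H_k y_j}+\sigma_j^2\inner{p_j}{H_k p_j}$, the curvature safeguards on $y_j$ and $p_j$, the inequality $\norm{u}^2+\norm{v}^2\ge\tfrac12\norm{u+v}^2$, and the residual orthogonality $\inner{r_{j+1}}{y_{j+1}}=0$, while the second bound chains Lemma~\ref{lemma:bound_grad_pullback}, Lemma~\ref{lemma:hessian_lipschtiz_grad_ineq} and the stopping test~\eqref{eq:tcg_small_residual} exactly as the paper does to reach $\norm{\grad f(R_{x_k}(s_k))}\le \cR\gamma\norm{s_k}+\tfrac{\cR\hat{L}_H}{2}\norm{s_k}^2$. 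The only divergence is at the last step: the paper inverts this quadratic explicitly via its positive root together with $-a+\sqrt{a^2+bt}\ge(-a+\sqrt{a^2+b})\min(t,1)$, which delivers the stated constant directly, whereas your case split on $\norm{s_k}$ against a threshold of order $\gamma$ is a cosmetic variant in which the claim that the constants ``track'' to exactly $1/(2(\cR^2+2\hat{L}_H\cR))$ is asserted rather than verified (choosing the threshold as the exact root, or just reproducing the paper's computation, closes this).
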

\begin{proof}
	By assumption, we have $s_k=y_{j+1}$, where $y_{j+1}$ is the first iterate of tCG 
	that satisfies the residual condition~\eqref{eq:tcg_small_residual}. As a result,
	\begin{align}
	\inner{s_k}{H_k s_k} 
	&=  \inner{y_j + \sigma_j p_j}{ H_k (y_j + \sigma_j p_j)} \nonumber\\
	&=  \inner{y_j}{ H_k y_j} + \sigma_j \inner{y_j}{ H_k p_j } + \sigma_j \inner{p_j}{ H_k y_j } 
	+ \sigma_j^2 \inner{p_j}{ H_k p_j }\nonumber\\
	&= \inner{y_j}{H_k y_j } + \sigma_j^2\inner{p_j}{ H_k p_j},\label{eq:quadratic_sk}
	\end{align}
	where the last line follows from the standard properties of CG iterates 
	$y_j = \sum_{i=0}^{j-1} \sigma_i p_i$ and $\inner{p_j}{H_k p_i} =0$ for $i\neq j$, which implies 
	$\inner{p_j}{H_k y_j} =0$ (see, e.g., \cite[Chapter 5]{nocedalNumericalOptimization2006} or 
	\cite[Appendix A]{royer2020newton}). 
	In addition, Algorithm~\ref{algo:tCG} ensures that 
	$\inner{p_j}{H_k p_j}\geq \gamma \norm{p_j}^2$ and $\inner{y_j}{H_k y_j}\geq \gamma \norm{y_j}^2$, 
	hence
	\begin{equation}
	\label{eq:sk_gamma2_convex}
		\inner{s_k}{H_k s_k} 
		\ge \gamma \norm{y_j}^2 + \gamma \norm{\sigma_j p_j}^2
		\ge \dfrac{\gamma}{2}\norm{s_k}^2, 
	\end{equation}
	where the last inequality follows from $\norm{u}^2 + \norm{v}^2 \geq \onehalf \norm{u+v}^2$ for 
	all vectors $u,v\in \T_{x_k}\M$. Meanwhile, the model decrease satisfies
	\begin{eqnarray*}
		m_k(0)-m_k(s_k) 
		= -\inner{s_k}{g_k} - \dfrac{1}{2}\inner{s_k}{H_k s_k}
		&= &-\inner{s_k}{-H_k s_k + r_{j+1}} - \dfrac{1}{2}\inner{s_k}{H_k s_k} \\
		&= &\dfrac{1}{2}\inner{s_k}{H_k s_k} - \inner{r_{j+1}}{s_k} \\
		&= &\dfrac{1}{2}\inner{s_k}{H_k s_k},
	\end{eqnarray*}
	using that $r_{j+1}=H_k y_{j+1}+g_k=H_k s_k+g_k$ and $\inner{r_{j+1}}{y_{j+1}}=0$ by the orthogonality property of 
	the CG residual. Combining the last equality with~\eqref{eq:sk_gamma2_convex}, we obtain
	\begin{equation}
	\label{eq:s_k_decrease}
		m_k(0)-m_k(s_k)  \ge \dfrac{1}{4}\gamma \norm{s_k}^2,
	\end{equation}
	which proves the first part of the proposition.

Now assume $\norm{s_k}\leq \smallstep$ and let 
	$g_k^+=\grad f\left(R_{x_k}(s_k)\right)$. Using the proof of Lemma~\ref{lemma:grad_next_step} gives
	\begin{eqnarray}
	%\label{eq:grad_bound_quadratic}
		\norm{g_k^+} 
		&\le &\cR \norm{\nabla \hat f_k(s_k) - \grad f(x_k) + \grad f(x_k)} \nonumber\\
		&= &\cR\norm{\nabla \hat f_k(s_k) - \grad f(x_k) - H_k s_k +r_{j+1}} \nonumber \\
		&\le &\cR \norm{\nabla \hat f_k(s_k) - \grad f(x_k) - H_k s_k} + \cR \norm{r_{j+1}}\label{eq:grad_bound_quadratic_2}\\
		&\le &\dfrac{\hat{L}_H\cR}{2}\norm{s_k}^2 + \cR\gamma  \norm{s_k}, \nonumber
	\end{eqnarray}
	where the last line follows from the small residual condition~\eqref{eq:tcg_small_residual}.
	The inequality
	\begin{equation}
	\label{eq:quadnormsk}
		\dfrac{\hat{L}_H\cR}{2}\norm{s_k}^2 + \cR \gamma \norm{s_k} - \norm{g_k^+} \geq 0
	\end{equation}
	involves a univariate quadratic function of $\norm{s_k} \ge 0$, and thus~\eqref{eq:quadnormsk} 
	holds as long as 
	\begin{eqnarray*}
		\norm{s_k} 
		\ge \dfrac{- \cR \gamma +\sqrt{\cR^2\gamma^2 + 2\hat{L}_H\cR \norm{g_k^+}  }}{\hat{L}_H\cR } 
		&= &\left(\dfrac{- \cR +\sqrt{ \cR^2  + 2\hat{L}_H\cR \norm{g_k^+}\gamma^{-2}  }}{\hat{L}_H\cR }\right)\!\gamma\\
		&\ge &\left(\dfrac{- \cR + \sqrt{ \cR^2 +2\hat{L}_H \cR}}{\hat{L}_H \cR}\right) \min\!\left(\norm{g_k^+} \gamma^{-2}, 1\right)\gamma\\
		&= &\left(\dfrac{2}{ \cR + \sqrt{ \cR^2+2\hat{L}_H \cR}}\right) \min\!\left(\norm{g_k^+} \gamma ^{-1},\gamma\right) \\
		&\ge &\dfrac{1}{\sqrt{\cR^2+2\hat{L}_H \cR}}\min\!\left(\norm{g_k^+} \gamma ^{-1},\gamma\right)\!,
	\end{eqnarray*}
	where we used that $-a + \sqrt{a^2 + bt} \geq (-a + \sqrt{a^2 +b})\min(t,1)$ with $a= \cR$, $b=2\hat{L}_H \cR$ and 
	$t= \norm{g_{k+1}}\gamma^{-2}$ and that $\dfrac{2}{1+\sqrt{1+c}} \ge \dfrac{1}{\sqrt{1+c}}$ for any $c>0$. 
	Combining the above with~\eqref{eq:s_k_decrease}, we get
	\begin{eqnarray*}
		m_k(0) - m_k(s_k) 
		\geq \dfrac{1}{4}\gamma \norm{s_k}^2
		\geq \dfrac{1}{2 ( \cR^2  + 2\hat{L}_H \cR)}  \min\left(\norm{\grad f\left(R_{x_k}(s_k)\right)}^2 \gamma^{-1},\gamma^{3}\right),
	\end{eqnarray*}
	proving~\eqref{eq:decrease_small_residual_2}.
\end{proof}

The next lemma considers steps that lie on the boundary of the trust region. Note that our 
proof differs from the general nonconvex setting~\citep[Lemma 4.3]{curtis2021trust}, because we do not add an artificial regularizer in the subproblem.

\begin{lemma}%[tCG Boundary step]
\label{le:tcg_boundary}
	Under~\aref{assu:second_order_retraction} and~\aref{assu:ss_R3}, 
	consider the $k$th iteration of Algorithm~\ref{algo:inexact_RTR}. Suppose that 
	Algorithm~\ref{algo:tCG} is called and outputs $s_k$ together with 
	\texttt{outCG=boundary\_step}. Then,
	\begin{equation}
	\label{eq:inexact_boundary_decrease}
		m_k(0)-m_k(s_k) \geq \dfrac{1}{4}\gamma\Delta_k^2.
	\end{equation}
\end{lemma}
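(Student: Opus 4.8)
The plan is to exploit the explicit structure of the step returned when Algorithm~\ref{algo:tCG} exits with \texttt{outCG = boundary\_step}. At the inner iteration $j$ where this happens we have $s_k = y_j + \bar\sigma_j p_j$ with $\norm{s_k} = \Delta_k$ and $\bar\sigma_j \in [0,\sigma_j]$ where $\sigma_j = \norm{r_j}^2/\inner{p_j}{H_k p_j}$; crucially, the two curvature checks at iteration $j$ were passed before triggering termination, so $\inner{y_j}{H_k y_j} \ge \gamma\norm{y_j}^2$ and $\inner{p_j}{H_k p_j} \ge \gamma\norm{p_j}^2$. Since $y_j$ and $p_j$ are genuine conjugate-gradient iterates for the model $m_k$ (only the final step is truncated to the boundary), I can freely use the standard CG identities $\inner{p_j}{H_k y_j}=0$ (conjugacy, since $y_j=\sum_{i<j}\sigma_i p_i$), $\inner{y_j}{r_j}=0$, and $\inner{p_j}{r_j}=-\norm{r_j}^2$, where $r_j = \nabla m_k(y_j) = g_k + H_k y_j$.

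First I would expand $m_k(0)-m_k(s_k) = -\inner{s_k}{g_k} - \tfrac12\inner{s_k}{H_k s_k}$. Writing $g_k = r_j - H_k y_j$ and applying the three CG identities gives $-\inner{s_k}{g_k} = \inner{y_j}{H_k y_j} + \bar\sigma_j\norm{r_j}^2$, while conjugacy gives $\inner{s_k}{H_k s_k} = \inner{y_j}{H_k y_j} + \bar\sigma_j^2\inner{p_j}{H_k p_j}$. Combining, $m_k(0)-m_k(s_k) = \tfrac12\inner{y_j}{H_k y_j} + \bar\sigma_j\norm{r_j}^2 - \tfrac12\bar\sigma_j^2\inner{p_j}{H_k p_j}$. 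The key elementary observation is that $\bar\sigma_j \le \sigma_j$ means $\bar\sigma_j\inner{p_j}{H_k p_j} \le \norm{r_j}^2$, hence $\bar\sigma_j\norm{r_j}^2 - \tfrac12\bar\sigma_j^2\inner{p_j}{H_k p_j} \ge \tfrac12\bar\sigma_j^2\inner{p_j}{H_k p_j}$. Inserting the curvature lower bounds then yields $m_k(0)-m_k(s_k) \ge \tfrac{\gamma}{2}\bigl(\norm{y_j}^2 + \bar\sigma_j^2\norm{p_j}^2\bigr)$, and $\norm{u}^2+\norm{v}^2 \ge \tfrac12\norm{u+v}^2$ with $u=y_j$, $v=\bar\sigma_j p_j$ produces $m_k(0)-m_k(s_k) \ge \tfrac{\gamma}{4}\norm{s_k}^2 = \tfrac{\gamma}{4}\Delta_k^2$, which is exactly the claimed bound.

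The main obstacle is bookkeeping rather than anything conceptual: one must make sure the CG orthogonality/conjugacy relations really hold at the iteration where the boundary is reached (they do, because $y_j$ and $p_j$ come from the unmodified CG recursion prior to truncation), and keep careful track of which quantities are governed by $\sigma_j$ versus the truncated $\bar\sigma_j$. I would also point out, as flagged before the statement, why this differs from~\citep[Lemma 4.3]{curtis2021trust}: there the subproblem carries an artificial regularizer, whereas here positive definiteness of $H_k$ along the directions generated by tCG is guaranteed by the curvature checks themselves, and this is precisely what makes the constant $\tfrac14\gamma$ appear. The edge case $j=0$ ($y_0=0$, boundary hit immediately) is covered automatically, since the first term then vanishes and the second already gives $\tfrac12\gamma\Delta_k^2$.
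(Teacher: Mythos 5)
Your proof is correct and follows essentially the same route as the paper: both arguments use conjugacy ($\inner{p_j}{H_k y_j}=0$) together with the CG residual identities to split the decrease, exploit $\bar\sigma_j\le\sigma_j$ so that $\bar\sigma_j\norm{r_j}^2\ge\bar\sigma_j^2\inner{p_j}{H_k p_j}$, invoke the curvature checks $\inner{y_j}{H_k y_j}\ge\gamma\norm{y_j}^2$ and $\inner{p_j}{H_k p_j}\ge\gamma\norm{p_j}^2$ that necessarily passed before the boundary exit, and finish with $\norm{u}^2+\norm{v}^2\ge\tfrac12\norm{u+v}^2$. The only difference is presentational: the paper writes the decrease as $\bigl(m_k(0)-m_k(y_j)\bigr)+\bigl(m_k(0)-m_k(\bar\sigma_j p_j)\bigr)$ and bounds each piece, whereas you expand $m_k(0)-m_k(s_k)$ in one pass, which is the same computation.
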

\begin{proof}
	Since \texttt{outCG=boundary\_step}, the step has the 
	form $s_k = y_j+\bar{\sigma}_j p_j$ with $\|s_k\| = \Delta_k$ and
	\begin{equation}
	\label{eq:step_alpha}
		0 \le \bar{\sigma}_j \le \sigma_j = \dfrac{\norm{r_j}^2}{\inner{p_j}{H_k p_j}} 
		= -\dfrac{\inner{g_k}{p_j}}{\inner{p_j}{H_k p_j}},
	\end{equation}
	where the last equality holds by definition of the CG residual.
	%$\norm{r_j}^2 = - \inner{r_j}{p_j} = - \inner{g_k}{p_j}$. 
	Since $\inner{p_j}{H_k p_j} \geq \gamma \norm{p_j}^2>0$, Equation~\eqref{eq:step_alpha} 
	implies
 	\begin{eqnarray*}
		-\bar{\sigma}_j\inner{g_k}{p_j} &\geq  \bar{\sigma}_j^2 \inner{p_j}{H_k p_j},
 	\end{eqnarray*}
	from which we obtain
  	\begin{eqnarray}
  	\label{eq:dec_boundary_pjaux}
		m_k(0)-m_k(\bar{\sigma}_j p_j) 
		&=  &-\bar{\sigma}_j\inner{g_k}{p_j} - \dfrac{\bar{\sigma}_j^2}{2} \inner{p_j}{H_k p_j} 
		\nonumber \\
		&\ge &\bar{\sigma}_j^2 \inner{p_j}{H_k p_j} - \dfrac{\bar{\sigma}_j^2}{2} \inner{p_j}{H_k p_j} 
		\nonumber \\
		&= &\dfrac{\bar{\sigma}_j^2}{2} \inner{p_j}{H_k p_j} \nonumber \\
		&\ge  &\dfrac{\gamma}{2} \norm{\bar{\sigma}_j p_j}^2.
	\end{eqnarray}
	The reasoning used to prove~\eqref{eq:decrease_small_residual_1} can be 
	applied to $m_k(0)-m_k(y_j)$ (recall that $\inner{y_j}{H_k y_j} \geq \gamma \norm{y_j}^2$), which gives 
	\begin{equation}
	\label{eq:dec_boundary_yjaux}
		m_k(0)-m_k(y_j) \geq \dfrac{\gamma}{2} \norm{y_j}^2. 
	\end{equation}
	Finally, using $m_k(s_k) - m_k(0) = m_k(y_j) - m_k(0) + m_k(\bar\sigma_j p_j) - m_k(0)$ (see~\eqref{eq:quadratic_sk}), we combine~\eqref{eq:dec_boundary_pjaux} and~\eqref{eq:dec_boundary_yjaux} to conclude as follows:
	\begin{eqnarray*}
		m_k(0) - m_k(s_k) 
		&= &m_k(0) - m_k(y_j) + m_k(0) - m_k(\bar{\sigma}_j p_j)\\
		&\ge &\dfrac{\gamma}{2} \norm{y_j}^2 + \dfrac{\gamma}{2} \norm{\bar{\sigma}_j p_j}^2\\
		&\ge &\dfrac{\gamma}{2} \left(\norm{y_j}^2 +\norm{\bar{\sigma}_j p_j}^2\right)\\
		&\ge &\dfrac{\gamma}{4} \norm{s_k}^2,
	\end{eqnarray*}
	where the last line holds because $\norm{u}^2 + \norm{v}^2 \geq \onehalf \norm{u+v}^2$ for all
	$u,v\in \T_{x_k}\M$.
\end{proof}

We end this section with a lower bound on the trust-region radius based on the decrease lemmas 
above, akin to the exact setting. 

\begin{lemma}%[Lower bound on TR radius]
\label{lemma:tr_radius_inexact}
	Let~\aref{assu:second_order_retraction},~\aref{assu:ss_R3},~\aref{assu:hessian_lipschitz} and 
	\aref{assu:bounded_hessian} hold. For any index $k\geq 0$, assume that all calls to the MEO (Algorithm~\ref{algo:MEO}) up to iteration $k$ with an iterate in $\Rbeta$ succeed in finding a direction of sufficient 
	negative curvature. Then, the trust-region radius $\Delta_k$ in 
	Algorithm~\ref{algo:inexact_RTR} satisfies
	\begin{equation}\label{eq:tr_radius_inexact}
		\Delta_k \geq \Deltaminin := \cDeltain \min\!\left(\Delta_0, \alpha^{1/2},
		\alpha^{2/3}, \beta, \gamma\right),
	\end{equation}
	where $\cDeltain = \min\!\left(1, 
	\tau_1\dfrac{3(1-\eta_1)}{2 L_H},
	\tau_1\sqrt{\dfrac{3(1-\eta_1)}{2 L_H}},
	\tau_1\sqrt[3]{\dfrac{3(1-\eta_1)}{2 \cH L_H}}\right)$.
\end{lemma}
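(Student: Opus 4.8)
The plan is to follow the structure of the proof of Lemma~\ref{lemma:lower_bound_radius}, replacing the exact decrease guarantees by their inexact counterparts from Lemmas~\ref{le:tcggradalpha},~\ref{le:meodecrease},~\ref{le:tcg_small_residual} and~\ref{le:tcg_boundary}. First I would establish that iteration $k$ is necessarily successful (hence $\Delta_{k+1}\ge\Delta_k$) as soon as $\Delta_k$ falls below an explicit, iteration-independent threshold. As in~\eqref{eq:fkplus1-model}, the second-order retraction property~(\aref{assu:second_order_retraction}) together with~\aref{assu:hessian_lipschitz} bounds the numerator of $1-\rho_k=\tfrac{f(R_{x_k}(s_k))-m_k(s_k)}{m_k(0)-m_k(s_k)}$ by $\tfrac{L_H}{6}\norm{s_k}^3\le\tfrac{L_H}{6}\Delta_k^3$, irrespective of which step Algorithm~\ref{algo:inexact_RTR} selects. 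It then suffices to lower bound the denominator in each branch of the algorithm.

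The case split mirrors Algorithm~\ref{algo:inexact_RTR}. When $\norm{g_k}\ge\alpha$ (so $x_k\in\Ralpha$), the step is the one returned by Algorithm~\ref{algo:tCG}; Lemma~\ref{le:tcggradalpha} gives $m_k(0)-m_k(s_k)\ge\tfrac12\min(\Delta_k,\alpha/\cH)\alpha$, and for the remaining tCG flags that can occur here one uses that the returned step still realizes at least the Cauchy decrease once $\Delta_k$ is small, so iteration $k$ is successful once $\Delta_k$ is below a constant times $\min(\alpha^{1/2},\alpha^{2/3})$. When $\norm{g_k}<\alpha$ we have $x_k\in\Rbeta\cup\Rgamma$, and these regions are disjoint since membership in $\Rgamma$ forces $\lambdamin(\Hess f(x_k))\ge\gamma>0$. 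If the MEO is invoked with $x_k\in\Rbeta$, the standing hypothesis of the lemma makes it return a direction of curvature at most $-\tfrac{\beta}{2}$, so Lemma~\ref{le:meodecrease} gives $m_k(0)-m_k(s_k)\ge\tfrac14\beta\Delta_k^2$ and the iteration is successful once $\Delta_k\le\tfrac{3(1-\eta_1)\beta}{2L_H}$. In every remaining situation the step comes from tCG along directions of curvature at least $\gamma$: a \texttt{boundary\_step} gives $m_k(0)-m_k(s_k)\ge\tfrac14\gamma\Delta_k^2$ by Lemma~\ref{le:tcg_boundary}, while a \texttt{small\_residual} step (or a \texttt{max\_iter} step, which with $x_k\in\Rgamma$ satisfies $H_k\succeq\gamma\Id$ along the whole run, and otherwise is followed by an MEO call that reduces to the $\Rbeta$ case) gives $m_k(0)-m_k(s_k)\ge\tfrac14\gamma\norm{s_k}^2$ by Lemma~\ref{le:tcg_small_residual}; in the latter two $\norm{s_k}$ cancels between numerator and denominator, so the iteration is successful once $\Delta_k\le\tfrac{3(1-\eta_1)\gamma}{2L_H}$.

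Taking the minimum of the thresholds above and bounding every constant below by $\tfrac{3(1-\eta_1)}{2L_H}$, $\sqrt{\tfrac{3(1-\eta_1)}{2L_H}}$ and $\sqrt[3]{\tfrac{3(1-\eta_1)}{2\cH L_H}}$ — the quantities appearing in $\cDeltain$ — I conclude that iteration $k$ is successful whenever $\Delta_k\le\tfrac{\cDeltain}{\tau_1}\min(\alpha^{1/2},\alpha^{2/3},\beta,\gamma)$, in which case $\Delta_{k+1}\ge\Delta_k$. Since an unsuccessful iteration shrinks the radius only by the factor $\tau_1$, an induction on the iteration index starting from $\Delta_0$ — as at the end of the proof of Lemma~\ref{lemma:lower_bound_radius} — yields $\Delta_j\ge\min\!\big(\Delta_0,\cDeltain\min(\alpha^{1/2},\alpha^{2/3},\beta,\gamma)\big)\ge\Deltaminin$ for all $j\le k$, which is the claim.

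The part I expect to be the main obstacle is the exhaustive bookkeeping of the possible terminations of Algorithm~\ref{algo:tCG} (and of the follow-up MEO call) against the region in which $x_k$ lies: one must verify that \emph{each} combination of a tCG flag with $x_k\in\Ralpha\cup\Rbeta\cup\Rgamma$ produces a model decrease of one of the forms $\gtrsim\min(\Delta_k,\alpha/\cH)\alpha$, $\gtrsim\beta\Delta_k^2$, $\gtrsim\gamma\Delta_k^2$ or $\gtrsim\gamma\norm{s_k}^2$, the \texttt{not\_strongly\_convex} termination being the most delicate since the step it returns is a rescaled intermediate iterate rather than the Cauchy point. The factor $2$ that distinguishes $\cDeltain$ from the exact-case constant $\cDelta$ merely reflects that the inexact decrease lemmas carry a $\tfrac14$ where the exact ones carry a $\tfrac12$, and the requirement that the otherwise probabilistic bound of Lemma~\ref{le:meodecrease} hold along the realized trajectory is exactly what the MEO-success hypothesis provides.
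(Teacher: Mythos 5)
Your proposal matches the paper's proof essentially step for step: bound the numerator of $1-\rho_k$ by $\tfrac{L_H}{6}\Delta_k^3$ via \aref{assu:second_order_retraction} and \aref{assu:hessian_lipschitz}, split cases by the type of step actually taken (Cauchy-type decrease from Lemma~\ref{le:tcggradalpha} when $\norm{g_k}\ge\alpha$, Lemma~\ref{le:meodecrease} for MEO steps in $\Rbeta$ under the stated success hypothesis, Lemmas~\ref{le:tcg_boundary} and~\ref{le:tcg_small_residual} for the remaining tCG steps), and conclude with the standard $\tau_1$-induction on the radius update. The only difference is that you spell out the \texttt{max\_iter}/\texttt{not\_strongly\_convex} bookkeeping slightly more explicitly than the paper does, which is harmless and, if anything, slightly more careful.
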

\begin{proof}
	The proof follows the lines of the exact case (Lemma~\ref{lemma:lower_bound_radius}) by considering the 
	quantity $1-\rho_k$. However, rather than partitioning the iterates according to 
	the strict saddle regions, we consider the various steps that can be produced by the trust-region 
	algorithm.

	Consider first that tCG (Algorithm~\ref{algo:tCG}) is called and outputs $s_k$ with flag 
	\texttt{outCG=small\_residual}. Per Lemma~\ref{le:tcg_small_residual}, we know that 
	the model decrease satisfies~\eqref{eq:decrease_small_residual_1}. Using 
	\aref{assu:hessian_lipschitz}, we combine this with the bound~\eqref{eq:fkplus1-model} on $f(R_{x_k}(s_k))- m_k(s_k)$ to give
	\begin{eqnarray*}
		1-\rho_k 
		= \frac{f(R_{x_k}(s_k))- m_k(s_k)}{m_k(0)-m_k(s_k)} 
		\le \frac{(L_H/6)\norm{s_k}^3}{(\gamma/4)\norm{s_k}^2}
		= \frac{2\,L_H}{3\gamma}\norm{s_k} \le \frac{2\,L_H}{3\gamma}\Delta_k.
	\end{eqnarray*}
	Therefore, if \texttt{outCG=small\_residual} and $\Delta_k \le \frac{3(1-\eta_1)}{2\,L_H\gamma}$, then $\rho_k \ge \eta_1$ and the 
	iteration is successful.

	Suppose now that tCG outputs a boundary step, i.e., 
	\texttt{outCG=boundary\_step}. Lemma~\ref{le:tcg_boundary} combined with~\eqref{eq:fkplus1-model} gives
	\begin{eqnarray*}
		1-\rho_k 
		\le \frac{(L_H/6)\Delta_k^3}{(\gamma/4)\Delta_k^2} 
		\le \frac{2\,L_H}{3\gamma}\Delta_k,
	\end{eqnarray*}	
	hence the same conclusion than in the previous case holds.

	If the MEO is called with $x_k \in \Rbeta$, then by assumption  
	it succeeds in finding a direction of curvature at most $-\beta/2$. Lemma~\ref{le:meodecrease} 
	combined with~\eqref{eq:fkplus1-model} gives 
	\begin{eqnarray*}
		1-\rho_k 
		\le \frac{(L_H/6)\Delta_k^3}{(\beta/4) \Delta_k^2} 
		\le \frac{2 L_H}{3 \beta} \Delta_k.
	\end{eqnarray*}
	If $\norm{g_k}\geq \alpha$ and $s_k$ is not a boundary step, Lemma~\ref{le:tcggradalpha} combined with~\eqref{eq:fkplus1-model} gives
	\begin{eqnarray*}
		1-\rho_k 
		\le \frac{(L_H/6)\Delta_k^3}{\onehalf	\min \left(\Delta_k, \dfrac{\alpha}{\cH}\right)\alpha} 
		\le \frac{L_H}{3}\max\!\left(\frac{\Delta_k^2}{\alpha}, \frac{\cH \Delta_k^3}{\alpha^2} 
		\right).
	\end{eqnarray*}
%	It follows that the iteration is successful if
%	\begin{equation*}
%	%\label{eq:deltasmallinex}
%		\Delta_k \le \min\left\{
%		\frac{3(1-\eta_1)}{2 L_H}\beta,
%		\sqrt{\dfrac{3(1-\eta_1)}{2 L_H}}\alpha^{1/2},
%		\sqrt[3]{\dfrac{3(1-\eta_1)}{2 \cH L_H}}\alpha^{2/3}\right\}.
%	\end{equation*}
	We have thus established that the $k$th iteration is successful as long as
	\[
		\Delta_k \le \min\!\left(\dfrac{3(1-\eta_1)}{2 L_H}\gamma,
		\dfrac{3(1-\eta_1)}{2 L_H}\beta,
		\sqrt{\dfrac{3(1-\eta_1)}{ L_H}}\alpha^{1/2},
		\sqrt[3]{\dfrac{3(1-\eta_1)}{ \cH L_H}}\alpha^{2/3}\right)\!.
	\]
	holds, in which case $\Delta_{k+1} \ge \Delta_k$. Applying the updating rule on $\Delta_k$, we find 
	that the trust-region radius is lower bounded for any $k \geq 0$ by
	\begin{align*}
		\Delta_k 
		&\ge \min\!\left(\Delta_0,\tau_1\dfrac{3(1-\eta_1)}{2 L_H}\gamma,
		\tau_1\dfrac{3(1-\eta_1)}{2 L_H}\beta,
		\tau_1\sqrt{\dfrac{3(1-\eta_1)}{2 L_H}}\alpha^{1/2},
		\tau_1\sqrt[3]{\dfrac{3(1-\eta_1)}{2 \cH L_H}}\alpha^{2/3}\right) \\
		&\ge \cDeltain \min\!\left(\Delta_0,\alpha^{1/2},\alpha^{2/3}\,\beta,\gamma\right)\!.\qedhere
	\end{align*}
\end{proof}

The result of Lemma~\ref{lemma:tr_radius_inexact} relies on the MEO not failing 
to detect sufficient negative curvature if present. In the upcoming analysis, we 
bound the probability of such failure while deriving our main complexity result.

%%%%%%%%%%%%%%%%%%%%%%%%%%%%%%%%%%%%%%%%%%%%%%%%%%%%%%%%%%%%%%%%%%%%%%%%%%%%%%%%%%%%%%%%%%%%%%%%%%%
\subsection{Complexity bounds}
\label{ssec:wccinexact}

We now derive complexity bounds for our inexact trust-region method (Algorithm~\ref{algo:inexact_RTR}), and show that they depend logarithmically on the optimality tolerances, similarly to those established in 
Section~\ref{ssec:wccexact} for the exact algorithm.

We begin by describing the local convergence of Algorithm~\ref{algo:inexact_RTR}. The reasoning 
follows the exact setting detailed in Section~\ref{ssec:localcvexact}, and uses several results 
from that section.

\begin{theorem}
\label{theo:local_conv_inexact}
	Suppose that~\aref{assu:second_order_retraction} and~\aref{assu:ss_R3} hold.
	Let $x_k\in \Rgamma$ be an iterate produced by Algorithm~\ref{algo:inexact_RTR}, and let $x^* \in \M$ be
	a local minimum of~\eqref{eq:P} such that $\dist(x_k,x^*)\leq \delta$ and $f$ is geodesically
	$\gamma$-strongly convex on $\{y\in\M: \dist(y,x^*) < 2\delta\}$. Finally, suppose that either 
	$\grad f(x_k)=0$ or that
		\begin{align}\label{eq:condition_quadratic_conv_inexact}
		\norm{\grad f(x_k)} &< \min\!\left(	\cquadin \min\!\left(1,\gamma,\gamma^2,\gamma\delta\right),\gamma \Delta_k \right)\!,
	\end{align}
%	\begin{equation}\label{eq:condition_quadratic_conv_inexact}
%		\norm{\grad f(x_k)} < \min \left\{ 
%		\cquadin \min\{1,\gamma,\gamma^2,\gamma\delta\},
%		\gamma \Delta_k\right\},
%	\end{equation}
	where $\cquadin =\min\!\left(\dfrac{3(1-\eta_1)}{2 L_H},
	\smallstep,\dfrac{1}{\cdxRs},\dfrac{1}{2},\dfrac{1}{\cR(2+\lHhat)}
%	,\dfrac{1}{\cR\,\hat{L}_H}
	\right)\!$.
	Then, either Algorithm~\ref{algo:inexact_RTR} terminates or computes an iterate satisfying~\eqref{eq:target} 
	in at most 
	\begin{equation}\label{eq:log_log_inexact}
		\log_2 \log_2 \left( \dfrac{2\min (1,\gamma^2)}{\cR (\lHhat+2)\varepsilon_g}\right)
	\end{equation}
	iterations.
\end{theorem}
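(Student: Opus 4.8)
The plan is to mirror the local‑convergence argument for the exact algorithm (Proposition~\ref{prop:tr_local_convergence_Cora} and Theorem~\ref{theorem:local_phase}), with the exact Newton step replaced by the inexact step produced by Algorithm~\ref{algo:tCG}. I would first dispose of the corner case $\grad f(x_k)=0$: then $x_k\in\Rgamma$ is a critical point with $\lambda_{\min}(H_k)\ge\gamma>-\beta$, so Algorithm~\ref{algo:MEO} cannot return a direction of curvature below $-\tfrac\beta2$ and must certify $\lambda_{\min}(H_k)\ge-\beta$; Algorithm~\ref{algo:inexact_RTR} then terminates with $x_k$, which satisfies~\eqref{eq:target} trivially. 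From now on assume $\grad f(x_k)\neq 0$ and~\eqref{eq:condition_quadratic_conv_inexact}. Moreover, as soon as some later iterate $x_l\in\Rgamma$ has $\norm{g_l}\le\varepsilong$ we are done, since then $\lambda_{\min}(\Hess f(x_l))\ge\gamma>-\varepsilonH$; hence we may assume $\norm{g_l}>\varepsilong$ until~\eqref{eq:target} holds, which is precisely what lets us invoke Lemma~\ref{le:tcgits}.

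\textbf{Step 1 (tCG returns an inexact Newton step with small residual).} Since $x_k\in\Rgamma$, $\gamma\Id\preceq H_k\preceq\cH\Id$ (using~\aref{assu:bounded_hessian}), so the curvature tests of Algorithm~\ref{algo:tCG}, $\inner{y_j}{H_k y_j}<\gamma\norm{y_j}^2$ and $\inner{p_j}{H_k p_j}<\gamma\norm{p_j}^2$, never fire. By residual--iterate orthogonality of CG, $r_j=H_k y_j+g_k$ satisfies $\inner{r_j}{y_j}=0$, whence $\gamma\norm{y_j}^2\le\inner{y_j}{H_k y_j}=-\inner{y_j}{g_k}\le\norm{y_j}\norm{g_k}$, so every iterate obeys $\norm{y_j}\le\norm{g_k}/\gamma<\Delta_k$ by~\eqref{eq:condition_quadratic_conv_inexact}; thus tCG never hits the trust-region boundary. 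By Lemma~\ref{le:tcgits} the residual condition~\eqref{eq:tcg_small_residual} is met within $\kmax$ iterations, so tCG exits with \texttt{outCG=small\_residual} and Algorithm~\ref{algo:inexact_RTR} sets $s_k=s_k^{CG}$, with $\norm{s_k}\le\norm{g_k}/\gamma<\cquadin\le\smallstep$.

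\textbf{Steps 2--4 (success, containment, quadratic contraction, counting).} From~\eqref{eq:decrease_small_residual_1}, $m_k(0)-m_k(s_k)\ge\tfrac14\gamma\norm{s_k}^2$; combined with the cubic upper bound~\eqref{eq:fkplus1-model} this gives $1-\rho_k\le\tfrac{2L_H}{3\gamma}\norm{s_k}\le\tfrac{2L_H}{3\gamma^2}\norm{g_k}\le 1-\eta_1$, using $\norm{g_k}<\cquadin\gamma^2$ and $\cquadin\le\tfrac{3(1-\eta_1)}{2L_H}$, so the iteration is successful and $\Delta_{k+1}\ge\Delta_k$. Reproducing the argument of Proposition~\ref{prop:stay_in_C3} --- using $\norm{s_k}\le\norm{g_k}/\gamma$, Lemma~\ref{lemma:dist-retraction}, geodesic $\gamma$-strong convexity around $x^*$, and the gradient decrease below --- then yields $\dist(x_{k+1},x^*)<\delta$, i.e.\ $x_{k+1}\in\Rgamma$ near the same minimizer. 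For the contraction, set $g_k^+=\grad f(x_{k+1})$; the bound~\eqref{eq:grad_bound_quadratic_2} together with~\eqref{eq:hessgradlip} gives $\norm{g_k^+}\le\cR\bigl(\tfrac{\lHhat}{2}\norm{s_k}^2+\norm{r_{j+1}}\bigr)$, and now taking the other branch $\norm{r_{j+1}}\le\zeta\norm{g_k}^2<\norm{g_k}^2$ of~\eqref{eq:tcg_small_residual} and $\norm{s_k}\le\norm{g_k}/\gamma$ gives $\norm{g_k^+}\le\cR\bigl(\tfrac{\lHhat}{2\gamma^2}+1\bigr)\norm{g_k}^2\le\tfrac{\cR(\lHhat+2)}{2\min(1,\gamma^2)}\norm{g_k}^2$ (checking $\gamma\ge1$ and $\gamma<1$ separately). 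Writing $\psi_k:=\tfrac{\cR(\lHhat+2)}{2\min(1,\gamma^2)}\norm{g_k}$, this reads $\psi_{k+1}\le\psi_k^2$, and~\eqref{eq:condition_quadratic_conv_inexact} with $\cquadin\le\tfrac{1}{\cR(2+\lHhat)}$ and $\min(1,\gamma,\gamma^2,\gamma\delta)\le\min(1,\gamma^2)$ forces $\psi_k<\tfrac12$, in particular $\norm{g_{k+1}}<\norm{g_k}$. Since then $\norm{g_{k+1}}<\norm{g_k}<\cquadin\min(1,\gamma,\gamma^2,\gamma\delta)$ and $\norm{g_{k+1}}<\gamma\Delta_k\le\gamma\Delta_{k+1}$, condition~\eqref{eq:condition_quadratic_conv_inexact} holds again at $k+1$, so Steps 1--4 recur by induction: all subsequent iterations are successful inexact Newton steps staying in $\Rgamma$, and $\psi_l\le\psi_k^{2^{\,l-k}}\le(\tfrac12)^{2^{\,l-k}}$ for all $l\ge k$. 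Hence $\norm{g_l}>\varepsilong$ forces $l-k<\log_2\log_2\!\bigl(\tfrac{2\min(1,\gamma^2)}{\cR(\lHhat+2)\varepsilong}\bigr)$, which is exactly~\eqref{eq:log_log_inexact}; once $\norm{g_l}\le\varepsilong$ with $x_l\in\Rgamma$, \eqref{eq:target} holds.

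I expect the main obstacle to be Step 1: showing that Algorithm~\ref{algo:tCG} exits with \texttt{small\_residual} rather than \texttt{boundary\_step}, \texttt{max\_iter} or \texttt{not\_strongly\_convex}. The curvature flags are immediate from $H_k\succeq\gamma\Id$; excluding the boundary step needs the a priori estimate $\norm{y_j}\le\norm{g_k}/\gamma<\Delta_k$, argued inductively along the CG run; and excluding \texttt{max\_iter} needs Lemma~\ref{le:tcgits} and the reduction to $\norm{g_k}>\varepsilong$. The remaining work is constant bookkeeping: making the single hypothesis~\eqref{eq:condition_quadratic_conv_inexact}, through the definition of $\cquadin$, simultaneously deliver a successful step, containment in $\Rgamma$, and the contraction factor $\psi_k<\tfrac12$ with precisely the constant of~\eqref{eq:log_log_inexact} --- which is where the $\min(1,\gamma^2)$ normalization (rather than $\gamma^2$) and the two separate branches of the residual condition~\eqref{eq:tcg_small_residual} get used.
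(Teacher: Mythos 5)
Your proposal is correct and follows essentially the same route as the paper's proof: dispose of the zero-gradient case via the MEO, show tCG exits with an interior small-residual step, verify success via the quarter-$\gamma$ model decrease and \eqref{eq:fkplus1-model}, reuse the Proposition~\ref{prop:stay_in_C3} argument for containment, and derive the quadratic contraction with constant $\cR(\lHhat+2)/(2\min(1,\gamma^2))$ before the $\log\log$ count. The only (harmless) deviations are bookkeeping: you bound the CG iterates' norms directly via residual--iterate orthogonality instead of the paper's appeal to monotonicity of CG iterate norms up to the model minimizer, and you are slightly more explicit than the paper about ruling out \texttt{max\_iter} by reducing to $\norm{g_k}>\varepsilong$ before invoking Lemma~\ref{le:tcgits}.
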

\begin{proof}
	Since $x_k \in \Rgamma$, we know that $\lambda_{\min}(H_k) \ge \gamma>0$. 
	If $\grad f(x_k)$ is zero, then Algorithm~\ref{algo:inexact_RTR} 
	calls Algorithm~\ref{algo:MEO}, which \emph{deterministically} outputs a certificate that 
	$\lambda_{\min}(H_k) > -\beta$ since no negative curvature direction exists, 
	hence the algorithm terminates.
	
	Consider now $\norm{\grad f(x_k)}>0$. Since $x_k \in \Rgamma$, Lemma~\ref{lemma:normskR3} and~\eqref{eq:condition_quadratic_conv_inexact} imply that the global 
	minimizer of the trust-region subproblem $s_k^{*}$ satisfies 
	$
		\norm{s_k^{*}} \le \norm{g_k}/\gamma < \Delta_k.
	$
 As a result, $s_k^*$ is the minimizer of the quadratic model, and lies inside the trust region. The iterates of CG have a norm not greater than the model minimizer, thus we have 
%	Given that Algorithm~\ref{algo:tCG} computes an approximate minimum of that model, we have
	\begin{equation}
	\label{eq:tcg_step_bounded_grad}
		\norm{s_k} \le \norm{s_k^*} < \Delta_k.
	\end{equation}
	We show that this step leads to a successful iteration. As in Proposition~\ref{prop:very_successful_steps}, we obtain from~\eqref{eq:condition_quadratic_conv_inexact} along with the 
	decrease~\eqref{eq:decrease_small_residual_1} and~\eqref{eq:fkplus1-model} that
	\begin{eqnarray*}
		f(R_{x_k}(s_k))-m_k(s_k) + (1-\eta_1)(m_k(s_k)-f(x_k)) 
		&\le 
		&\dfrac{L_H}{6}\norm{s_k}^3 - (1-\eta_1) \frac{\gamma}{4}\norm{s_k}^2 \\
		&\le 
		&\norm{s_k}^2 \left( \frac{L_H}{6}\frac{\|g_k\|}{\gamma}-\frac{\gamma}{4}(1-\eta_1) \right)
		< 0,
	\end{eqnarray*}
	showing that $\rho_k \ge \eta_1$ and thus the iteration is successful.
	
	Because $H_k \succeq \gamma \Id$ and the model minimizer is inside the trust-region, tCG terminates with a vector $s_k$ that satisfies the small residual condition~\eqref{eq:tcg_small_residual}. We combine 
	Lemma~\ref{lemma:hessian_lipschtiz_grad_ineq}, Lemma~\ref{lemma:bound_grad_pullback}, Lemma~\ref{lemma:normskR3} and~\eqref{eq:condition_quadratic_conv_inexact} to obtain
	\begin{eqnarray}
	\label{eq:gkp1inexact}
		\norm{g\kplus}_{x_{k+1}} \le \cR \norm{g_k}_{x_{k}}  
		&\le &\cR \norm{\nabla \hat f_k(s_k) - g_k - H_k s_k}_{x_{k}}  + \cR \norm{r_{j+1}}_{x_{k}}
		\nonumber \\
		&\le &\cR\dfrac{\lHhat}{2}\norm{s_k}^2_{x_{k}} +  \cR\norm{g_k}_{x_{k}}^{2} 
		\nonumber \\
		&\le &\cR\left(\dfrac{1}{\gamma^2}+\dfrac{\lHhat}{2}\right)\norm{g_k}_{x_{k}}^2 
		\nonumber \\
		&\le &\cR \dfrac{2+\lHhat}{2\min\!\left(1,\gamma^2\right)}\norm{g_k}_{x_{k}}^2 	
		< \frac{1}{2}\norm{g_k}_{x_{k}}.
	\end{eqnarray}
	
	As a result, we have $\norm{g_{k+1}} < \norm{g_k} <\gamma \Delta_k \leq \gamma \Delta\kplus$. 
	Applying Proposition~\ref{prop:stay_in_C3} ensures that
	$\dist(x\kplus,x^*) \le \delta$ (note that $\cquadin<\cquad$ and thus 
	\eqref{eq:norm_grad2} holds). We can apply the above reasoning to $x_{k+1}$, which 
	guarantees that either the method terminates or all subsequent iterates correspond to 
	successful iterations with truncated CG steps that satisfy~\eqref{eq:tcg_small_residual}. 
	Finally, to bound the number of iterations before reaching some $x_{\ell}$ such that 
	$\norm{\grad f(x_{\ell})} < \varepsilon_g$, we rewrite~\eqref{eq:gkp1inexact} as
	\begin{equation*}
		\cR\dfrac{(\lHhat+2)}{2\min\!\left(1,\gamma^2\right)}\norm{g_{k+1}} 
		\le 
		\left(\cR\dfrac{(\lHhat+2)}{2\min\!\left(1,\gamma^2\right)}\norm{g_k}\right)^2. 
	\end{equation*}
	This shows the iteration bound~\eqref{eq:log_log_inexact} for the local phase, using the proof of Theorem~\ref{theorem:local_phase}. 
%	Akin to the proof of Theorem~\ref{theorem:local_phase}, we can then show that the number of 
%	iterations before satisfying $\norm{g_{\ell}}<\varepsilon_g$ is upper bounded 
%	by~\eqref{eq:log_log_inexact}.
\end{proof}

Having characterized the local convergence behavior of our method, we can now derive 
global convergence results. As in the exact setting, we first bound the number of 
successful iterations.

\begin{theorem}[Number of successful iterations for Algorithm~\ref{algo:inexact_RTR}]
\label{thm:successful_complexity_inexact}
	Suppose that~\aref{assu:second_order_retraction}--\aref{assu:iter_meo} hold. 
	Algorithm~\ref{algo:inexact_RTR} either terminates or
	produces an iterate satisfying~\eqref{eq:target} in at most
	\begin{align*}
		K_{\calSin} 
		:= \dfrac{\Cinex}{\min\!\left(\ualpha^2,\ualpha^{4/3}\ubeta,\ualpha^{4/3}\ugamma,\ubeta^3,
		\ubeta^2\ugamma,\ubeta\,\ugamma^2,\ugamma^3,
		\ugamma\udelta^2 \right)}
		+ 1 + \log_2 \log_2 \left( \dfrac{2\ugamma^2}{\cR \hat{L}_H \varepsilon_g}\right)
	\end{align*}
	successful iterations with probability $(1-p)^{K_{\calSin}}$, where the constant $\Cinex>0$ depends on 
	$\cH$, $\cDeltain$, $\Delta_0$, $\smallstep$, $\cR$, $\hat{L}_H$, $\eta_1$, $\cquadin$, and 
	$\ualpha,\ubeta,\ugamma,\udelta$ are defined in Theorem~\ref{thm:successful_complexity_exact}.
\end{theorem}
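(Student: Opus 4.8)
The plan is to follow the proof of Theorem~\ref{thm:successful_complexity_exact}, with two modifications: the successful iterations are partitioned according to the \emph{type of step} produced by Algorithm~\ref{algo:inexact_RTR} rather than the strict-saddle region containing the current iterate, and the inexact decrease estimates of Section~\ref{ssec:inexlemmas} play the role of Lemmas~\ref{lemma:cauchy_decrease}--\ref{lemma:convex_decrease}. First I would fix $K\in\N$ such that Algorithm~\ref{algo:inexact_RTR} has neither terminated nor produced an iterate satisfying~\eqref{eq:target} by iteration $K$, and work throughout on the event $E$ that every call to Algorithm~\ref{algo:MEO} made up to iteration $K$ returns a correct answer. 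By~\aref{assu:iter_meo} and independence of the calls, $E$ has probability at least $(1-p)^{N}$, where $N$ is the number of MEO calls that could fail; the iteration count established below will show $N\le K_{\calSin}$. On $E$, Lemma~\ref{lemma:tr_radius_inexact} applies and gives the uniform bound $\Delta_k\ge\Deltaminin\ge\cDeltain\min(\Delta_0,1)\min(\ualpha^{2/3},\ubeta,\ugamma)$ for all $k\le K$.

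Writing $\calSin=\{k\le K:\rho_k\ge\eta_1\}$, I would split it into four groups: (i) iterations with $\norm{g_k}\ge\alpha$ at which tCG does not return \texttt{boundary\_step}; (ii) iterations at which the MEO outputs a negative-curvature step; (iii) iterations at which tCG returns \texttt{boundary\_step}; and (iv) iterations with $\norm{g_k}<\alpha$ at which tCG returns \texttt{small\_residual} and the MEO certifies $\lambda_{\min}(H_k)>-\beta$. The one genuinely new observation is that, on $E$, group (ii) also absorbs every iteration with $\norm{g_k}<\alpha$ at which tCG returns \texttt{max\_iter} or \texttt{not\_strongly\_convex}: each such termination forces $H_k\not\succeq\gamma\Id$ (for \texttt{max\_iter} this is Lemma~\ref{le:tcgits}, since tCG would otherwise have produced a small residual or hit the boundary within $\kmax$ steps), hence $x_k\notin\Rgamma$ by Proposition~\ref{prop:geostrcvxeigs}, hence $x_k\in\Rbeta$ by the strict-saddle property, so the MEO must return a direction of curvature at most $-\beta/2$. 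For groups (i), (ii), (iii) I would substitute $\Delta_k\ge\Deltaminin$ into Lemmas~\ref{le:tcggradalpha},~\ref{le:meodecrease},~\ref{le:tcg_boundary}, obtaining per-iteration function decreases bounded below by fixed multiples of $\eta_1\min(\alpha/\cH,\Deltaminin)\alpha$, $\eta_1\beta\Deltaminin^2$, and $\eta_1\gamma\Deltaminin^2$; together with~\aref{assu:lower_bound} and telescoping ($f(x_0)-f^*\ge\sum_{k\in\calSin}(f(x_k)-f(x_{k+1}))$) and the lower bound on $\Deltaminin$, this caps the cardinality of each group by the corresponding monomials in $\ualpha,\ubeta,\ugamma$ appearing in $K_{\calSin}$.

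Group (iv) is treated exactly as $\Igamma^{s,i}$ in Theorem~\ref{thm:successful_complexity_exact}: split according to $\norm{s_k}>\smallstep$ (long small-residual steps, decrease $\ge\tfrac{1}{4}\gamma\smallstep^2$ by~\eqref{eq:decrease_small_residual_1}) versus $\norm{s_k}\le\smallstep$, which forces $\norm{s_k}<\Delta_k$ and lets the refined estimate~\eqref{eq:decrease_small_residual_2} apply, yielding a decrease controlled by $\norm{\grad f(x_{k+1})}$; then split once more according to whether $\norm{\grad f(x_{k+1})}$ exceeds the threshold $\min(\cquadin\min(1,\gamma,\gamma^2,\gamma\delta),\gamma\Delta_k)$ of~\eqref{eq:condition_quadratic_conv_inexact}. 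Above the threshold, \eqref{eq:decrease_small_residual_2} gives a uniform decrease and the count follows as before (this is where the $\ugamma\udelta^2$ and the remaining monomials appear); below it, one uses $\norm{s_k}\le\norm{s_k^*}\le\norm{g_k}/\gamma$ (Lemma~\ref{lemma:normskR3} together with the norm-monotonicity of CG iterates) to check the hypotheses of Theorem~\ref{theo:local_conv_inexact} at $x_{k+1}$, so that either the algorithm terminates or the local phase starts and reaches~\eqref{eq:target} within $\log_2\log_2(2\ugamma^2/(\cR\hat{L}_H\varepsilon_g))$ further iterations; as in the exact proof this last subset contributes at most $1$ plus the $\log_2\log_2$ term plus the (already bounded) number of returns to groups (i)--(ii). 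Adding the bounds on all the pieces yields $|\calSin|\le K_{\calSin}$, and since the number $N$ of possibly-failing MEO calls is bounded by $K_{\calSin}$, the event $E$ has probability at least $(1-p)^{K_{\calSin}}$.

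The step I expect to be the main obstacle is the interplay between the probabilistic MEO and the deterministic counting argument. One must choose the conditioning event precisely so that (a) the radius lower bound of Lemma~\ref{lemma:tr_radius_inexact} and all decrease estimates are valid on it — a single false certificate at an iterate in $\Rbeta$ can otherwise let $\Delta_k$ collapse, which is exactly why that lemma is stated conditionally on the MEO not failing; (b) only the genuinely randomized MEO runs enter the probability, whereas calls made at iterates without curvature below $-\beta$ (in particular throughout the local phase) succeed deterministically and cost nothing; and (c) the resulting exponent is no larger than $K_{\calSin}$, so the bound is not circular. A lesser technical point is transferring the ``stays in $\Rgamma$'' reasoning of Proposition~\ref{prop:stay_in_C3} from the exact Newton step to the truncated-CG step, which goes through because $\norm{s_k}\le\norm{g_k}/\gamma$.
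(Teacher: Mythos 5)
Your proposal is correct and follows essentially the same route as the paper's proof: partition the successful iterations by step type/region (boundary steps, large-gradient tCG steps, MEO negative-curvature steps, small-residual steps), apply Lemmas~\ref{le:tcggradalpha}--\ref{le:tcg_boundary} together with the radius bound of Lemma~\ref{lemma:tr_radius_inexact}, telescope under~\aref{assu:lower_bound}, split the small-residual steps at the threshold of Theorem~\ref{theo:local_conv_inexact} so that the below-threshold ones are absorbed into the local phase or into the other (already bounded) groups, and charge a factor $(1-p)$ per MEO call. Your explicit argument that a \texttt{max\_iter} or \texttt{not\_strongly\_convex} exit with $\norm{g_k}<\alpha$ forces $x_k\in\Rbeta$ (making the partition exhaustive) is a point the paper leaves implicit, and your probability bookkeeping—including the worry about MEO calls on unsuccessful iterations—is at the same level of granularity as the paper's own, which likewise only charges the calls made at iterates in $\Rbeta$ and notes the refinement in its Remark~\ref{rem:probainex}.
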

\begin{proof}
	Let $K \in \N$ such that Algorithm~\ref{algo:inexact_RTR} has not produced an iterate 
	satisfying~\eqref{eq:target} by iteration $K$. Then the method cannot terminate before iteration 
	$K$, and tCG is called at every iteration. We partition the set $\calSin=\{k \le K: \rho_k \ge \eta_1\}$ of successful iterations based on properties of the steps, i.e., we define
	\begin{eqnarray*}
		\calSin_0 &= &\left\{k \in \calSin: 
		\mathrm{\texttt{outCG=boundary\_step}}\right\}, \\
		\calSin_1 &= &\left\{k \in \calSin \setminus \calSin_0: 
		x_k \in \Ralpha \right\}, \\
		\calSin_2 &= &\left\{k \in \calSin \setminus \calSin_0: 
		x_k \in \Rbeta \setminus \Ralpha \right\}, \\
		\calSin_3^{l} &= &\left\{k \in \calSin \setminus \calSin_0 : x_k \in \Rgamma \setminus \Ralpha, \mathrm{\texttt{outCG=small\_residual}}, \|s_k\| > \smallstep \right\}, \\
		\calSin_3^{s} &= &\left\{k \in \calSin \setminus \calSin_0 : x_k \in \Rgamma \setminus \Ralpha, \mathrm{\texttt{outCG=small\_residual}}, \|s_k\| \le \smallstep \right\}.
	\end{eqnarray*}	
	First consider $k \in \calSin_0$. We obtain through Lemma~\ref{le:tcg_boundary} that
	\begin{equation}
	\label{eq:inexdecS0}
		f(x_k)-f(x_{k+1}) 
		\ge \dfrac{\eta_1}{4} \gamma\Delta_k^2 \ge \dfrac{\eta_1}{4}\gamma\Deltaminin^2.
	\end{equation}	
For $k\in \calSin_1$, we have $\norm{g_k}\geq \alpha$ and Lemma~\ref{le:tcggradalpha} gives
		\begin{equation}
	\label{eq:inexdecS1}
		f(x_k)-f(x_{k+1}) 
		\ge \dfrac{\eta_1}{2}\min\left(\Delta_k, \dfrac{\alpha}{\cH}\right)\alpha
		\ge  \dfrac{\eta_1}{2} \min\!\left( \dfrac{\alpha^2}{\cH},
		\alpha\Deltaminin  \right)\!.
	\end{equation}
	 Consider now $k \in \calSin_2$, Lemma~\ref{le:meodecrease} gives
	\begin{equation}
	\label{eq:inexdecS2}
		f(x_k)-f(x_{k+1}) 
		\ge \dfrac{\eta_1}{4} \beta \Delta_k^2 	\ge  \dfrac{\eta_1}{4} \beta \Deltaminin^2,
	\end{equation}
	with probability $1-p$.\\
		For any $k \in \calSin_3^l$, Lemma~\ref{le:tcg_small_residual} guarantees that 
	\begin{equation}
	\label{eq:inexdecS3l}
		f(x_k)-f(x_{k+1}) \ge \dfrac{\eta_1}{4}\gamma\|s_k\|^2 \ge \dfrac{\eta_1\smallstep^2}{4}\gamma.
	\end{equation}
	For any $k \in \calSin_3^s$, Lemma~\ref{le:tcg_small_residual} and~\eqref{eq:decrease_small_residual_2} 
	apply. We further partition this set into $\calSin_3^{s,s} \cup \calSin_3^{s,l}$ 
	where
	\begin{eqnarray*}
		\calSin_3^{s,l} &= &\left\{k \in \calSin_{3}^s : 
		\|g_{k+1}\| \ge \min\!\left(\cquadin\min (1,\gamma,\gamma^2,\gamma\delta),
		\gamma\Delta_k\right)\! \right\}, \\
		\calSin_3^{s,s} &= &\calSin_3^s \setminus \calSin_3^{s,l}.
	\end{eqnarray*}
	If $k \in \calSin_3^{s,l}$, Equation~\eqref{eq:decrease_small_residual_2} implies that 
	\begin{eqnarray}
	\label{eq:inexdecS3sl}
		f(x_k)-f(x_{k+1})
		&\ge 
		&\dfrac{\eta_1}{2(\cR^2 + 2\hat{L}_H\cR)} \min\left(\norm{g_{k+1}}^2  \gamma^{-1}, \gamma^3 \right) 
		\nonumber \\
		&\ge 
		&\dfrac{\eta_1}{2(\cR^2 + 2\hat{L}_H\cR)}\min\left( 
		\cquadin^2 \min(\gamma^{-1},\gamma,\gamma^3,\gamma\delta^2),\gamma\Delta_k^2,\gamma^3 \right) 
		\nonumber \\
		&\ge 
		&\dfrac{\eta_1}{2(\cR^2 + 2\hat{L}_H\cR)}\min\left( 
		\cquadin^2\min(\gamma^{-1},\gamma,\gamma^3,\gamma\delta^2),\gamma\Deltaminin^2,\gamma^3 \right).
	\end{eqnarray}
	Finally, if $k \in \calSin_3^{s,s} $, we discuss based on the region of $x_{k+1}$. If $x_{k+1} \in \Rgamma$, the 
	local phase begins at $x_{k+1}$ per Theorem~\ref{theo:local_conv_inexact}. If $x_{k+1} \in \Ralpha\setminus\Rgamma$, we have $\norm{g_{k+1}}\geq \alpha$ and $k+1\in \calSin_0 \cup \calSin_1$. Finally, if $x\kplus \in \Rbeta$, it must be that $k+1 \in \calSin_0 \cup \calSin_2$. Thus,
	\begin{equation}
	\label{eq:inexbndS3ss}
		\left| \calSin_3^{s,s} \right| 
		\le \left| \calSin_0\right| + 
		\left| \calSin_1\right| + \left|\calSin_2\right| + 1 
		+ \log_2 \log_2 \left( \dfrac{2\ugamma^2}{\cR (\lHhat+2)\varepsilon_g}\right),
	\end{equation}
	and it suffices to bound $|\calSin_0|$,  $|\calSin_1|$, $|\calSin_2|$, $|\calSin_3^{l}|$, and 
	$|\calSin_3^{s,l}|$ to bound $|\calSin|$. The rest of the proof is similar to the proof of Theorem~\ref{thm:successful_complexity_exact} for the exact case. 
	The following inequality holds by~\aref{assu:lower_bound}:
	\begin{eqnarray*}
		f(x_0)-f^* &\ge &\sum_{k \in \calSin_0} f(x_k)-f(x_{k+1}) 
		+ \sum_{k \in \calSin_1} f(x_k)-f(x_{k+1}) 
		+\sum_{k \in \calSin_2} f(x_k)-f(x_{k+1}) \\
		& &+ \sum_{k \in \calSin_3^l} f(x_k)-f(x_{k+1}) 
		+\sum_{k \in \calSin_3^{s,l}} f(x_k)-f(x_{k+1}) .
	\end{eqnarray*}
	Combining this inequality with~\eqref{eq:inexdecS0},~\eqref{eq:inexdecS1}, \eqref{eq:inexdecS2}, 
	\eqref{eq:inexdecS3l} and~\eqref{eq:inexdecS3sl}, and considering each sum individually, 
	we obtain 
	\begin{eqnarray*}
		\left|\calSin_0\right| 
		&\le 
		&\frac{4(f(x_0)-f^*)}{\eta_1}\gamma^{-1}\Deltaminin^{-2}, \\
		\left|\calSin_1\right|
		&\le 
		&\frac{2(f(x_0)-f^*)}{\eta_1}\max\!\left(
		\cH\alpha^{-2},\alpha^{-1}\Deltaminin^{-1}\right), \\
		\left|\calSin_2\right|
		&\le 
		&\frac{4(f(x_0)-f^*)}{\eta_1}\beta^{-1}\Deltaminin^{-2}, \\
		\left|\calSin_3^l\right| 
		&\le 
		&\frac{4(f(x_0)-f^*)}{\eta_1} \smallstep^{-2} \gamma^{-1}, \\
		\left|\calSin_3^{s,l}\right| 
		&\le 
		&\frac{2(\cR^2 + 2\hat{L}_H\cR)(f(x_0)-f^*)}{\eta_1} 
		\max\!\left((\cquadin)^{-2}\min\!\left(\gamma^{-1},\gamma,\gamma^3,\gamma\delta^2\right)^{-1},
		\gamma^{-1}\Deltaminin^{-2},\gamma^{-3}\right)\!. 		
	%	
	%	\max\left\{(\cquadin)^{-1}\max\{\gamma,1,\gamma^{-1},\delta^{-1}\},\gamma^{-3},
	%	\Deltaminin^{-1}\right\}.
	\end{eqnarray*}
	Using the definition of $\Deltaminin$~\eqref{eq:tr_radius_inexact} as well as $\ualpha$, 
	$\ubeta$,$\ugamma$,$\udelta$, we obtain
	\begin{eqnarray*}
		\left|\calSin_0\right|
		&\le 
		&\frac{4(f(x_0)-f^*)}{\eta_1}\max\!\left(\Delta_0^{-2},1\right)\!\min \!\left(\ualpha^{4/3}\ugamma,\ubeta^{2}\ugamma,\ugamma^{3}\right)^{-1}, 
		\\
		\left|\calSin_1\right|
		&\le 
		&\frac{2(f(x_0)-f^*)}{\eta_1}\max\!\left(\cH,(\cDeltain\Delta_0)^{-1},(\cDeltain)^{-1}\right) \\
		&
		&\times
		\min\!\left(\ualpha^{2},\ualpha\ubeta ,\ualpha \ugamma\right)^{-1}, \\
		\left|\calSin_2\right|
		&\le 
		&\frac{4(f(x_0)-f^*)}{\eta_1}\max\!\left(\Delta_0^{-2},1\right)\min\!\left(\ualpha^{4/3}\ubeta,\ubeta^{3},\ugamma^{2}\ubeta\right)^{-1}, 
		\\
		\left|\calSin_3^l\right| 
		&\le 
		&\frac{4(f(x_0)-f^*)}{\eta_1} \smallstep^{-2} \ugamma^{-1}, \\
		\left|\calSin_3^{s,l}\right| 
		&\le 
		&\frac{2(\cR^2 + 2\hat{L}_H\cR)(f(x_0)-f^*)}{\eta_1} 
		\max\!\left((\cDeltain\Delta_0)^{-2},(\cquadin)^{-2}\right) \\
		&
		&\times 
		\min\!\left(\ualpha^{4/3}\ugamma,\ubeta^2 \ugamma,\ugamma^{3},\ugamma\udelta^2\right)^{-1}.
	\end{eqnarray*}	
	Combining these bounds with~\eqref{eq:inexbndS3ss} gives
	\begin{eqnarray*}
		\left| \calSin \right| 
		&=
		& \left| \calSin_0 \right| + \left| \calSin_1 \right| +\left| \calSin_2 \right| +\left| \calSin_3^l \right| 
		+\left| \calSin_3^{s,l} \right| +\left| \calSin_3^{s,s} \right| \\
		&\le
		&2\left| \calSin_0 \right| + 2\left| \calSin_1 \right| +2 \left| \calSin_2 \right|+\left| \calSin_3^l \right| +
		\left| \calSin_3^{s,l} \right| 
		+1+\log_2 \log_2 \left( \dfrac{2\ugamma^2}{\cR (\lHhat+2)\varepsilon_g}\right) \\
		&\le 
		&\Cinex\min\!\left(\ualpha^2,\ualpha^{4/3}\ubeta,\ualpha^{4/3}\ugamma,\ubeta^3,
		\ubeta\,\ugamma^2,\ubeta^2\ugamma,\ugamma^3,
		\ugamma\udelta^2 \right)^{-1} \\
		&
		&+1+\log_2 \log_2 \left( \dfrac{2\ugamma^2}{\cR (\lHhat+2)\varepsilon_g}\right),
	\end{eqnarray*}
	with
	\begin{eqnarray*}
		\Cinex
		&:=
		&\tfrac{4(f(x_0)-f^*)}{\eta_1}\left[ 
		\max\!\left(\Delta_0^{-2},1\right)\!+
		\max\!\left(\cH,(\cDeltain\Delta_0)^{-1},(\cDeltain)^{-1}\right)
		+\smallstep^{-2} \right.\\
		&
		&\left. 
		+\tfrac{(\cR^2 + 2L\cR)}{2}\max\!\left((\cDeltain\Delta_0)^{-2},(\cquadin)^{-2}\right)
		\right].
	\end{eqnarray*}
	Thus we have $K \le K_{\calSin}$ if none of the calls to the MEO 
	with $x_k \in \Rbeta$ fail to find sufficient negative curvature. The probability of all 
	calls succeeding is bounded below by 
	$(1-p)^{|\calSin_2|} \ge (1-p)^K \ge (1-p)^{K_{\calSin}}$, hence the conclusion.
\end{proof}

As in the exact setting, we can express the number of total iterations as a function of 
the number of successful iterations. This leads to the following result.

\begin{theorem}\label{thm:complexity_inexact}
	Under the assumptions of Theorem~\ref{thm:successful_complexity_inexact},
	Algorithm~\ref{algo:inexact_RTR} produces a point that satisfies~\eqref{eq:target} in at most
	\begin{small}
	\begin{eqnarray*}
		\tilde{K} 
		= \frac{{1+\log_{\tau_2}(1/\tau_1)}}{\log_{\tau_2}(1/\tau_1)}K_{\calSin}
		+
		\frac{1}{\log_{\tau_2}(1/\tau_1)}
		\max\left(0,
		\log_{\tau_2}\left(\dfrac{1}{\cDeltain}\right),
		\log_{\tau_2}\left(\dfrac{\Delta_0}{\cDeltain\ualpha^{\tfrac{2}{3}}}\right),
		\log_{\tau_2}\left(\dfrac{\Delta_0}{\cDeltain \ubeta}\right),
		\log_{\tau_2}\left(\dfrac{\Delta_0}{\cDeltain \ugamma}\right)
		\right)
	\end{eqnarray*}
	\end{small}
	iterations with probability at least $(1-p)^{\tilde K}$.
%	, where $K$ is defined in 
%	Theorem~\ref{thm:successful_complexity_inexact}. 
\end{theorem}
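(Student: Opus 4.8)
The plan is to mirror the two-step argument used in the exact setting (Theorem~\ref{thm:complexity_exact}): first bound the total number of iterations in terms of the number of successful ones through an inexact analogue of Lemma~\ref{le:unsuccessful_complexity_exact}, using the radius lower bound $\Deltaminin$ from Lemma~\ref{lemma:tr_radius_inexact} in place of $\Deltamin$; then substitute the bound $K_{\calSin}$ on successful iterations from Theorem~\ref{thm:successful_complexity_inexact}. The algebra is entirely routine; the one point that genuinely needs care is that both of these ingredients hold only on the event that every call to the minimum eigenvalue oracle at an iterate in $\Rbeta$ detects sufficient negative curvature, so the failure probability must be tracked consistently with the (a priori random) run length.

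Fix $K\in\N$ and suppose Algorithm~\ref{algo:inexact_RTR} has not produced a point satisfying~\eqref{eq:target} within its first $K$ iterations; in particular it has not terminated, so tCG is invoked at each of these iterations. Let $\calE_K$ be the event that every call to Algorithm~\ref{algo:MEO} made at an iterate $x_k\in\Rbeta$ with $k\le K$ returns a direction of sufficient negative curvature. On $\calE_K$, Lemma~\ref{lemma:tr_radius_inexact} gives $\Delta_k\ge\Deltaminin$ for all $k\le K$, so the telescoping argument in the proof of~\citep[Lemma 6.23]{boumal2023}---carried out exactly as in Lemma~\ref{le:unsuccessful_complexity_exact}, with $\tau_1,\tau_2$ in place of $\tfrac14,2$ and $\Deltaminin=\cDeltain\min(\Delta_0,\alpha^{1/2},\alpha^{2/3},\beta,\gamma)$ in place of $\Deltamin$---yields, with $\calSin$ the set of successful iterations of index at most $K$ and $M$ the maximum appearing in the statement of the theorem (which, since $\cDeltain\le1$, is nonnegative and dominates the raw max in $\alpha^{1/2},\alpha^{2/3},\beta,\gamma$ after coarsening $\alpha,\beta,\gamma$ to $\ualpha,\ubeta,\ugamma$),
\begin{equation*}
	|\calSin| \;\ge\; \frac{\log_{\tau_2}(1/\tau_1)}{1+\log_{\tau_2}(1/\tau_1)}\,(K+1)
	\;-\;\frac{M}{1+\log_{\tau_2}(1/\tau_1)}.
\end{equation*}

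Now condition on $\calE_{\tilde K}$. Each iteration makes at most one MEO call, so the number of MEO calls at $\Rbeta$ iterates during the first $\tilde K$ iterations is at most $\tilde K$; since these calls succeed independently with probability at least $1-p$, we have $\mathbb{P}(\calE_{\tilde K})\ge(1-p)^{\tilde K}$. On $\calE_{\tilde K}$, the hypotheses underlying Theorem~\ref{thm:successful_complexity_inexact} are met, so if the algorithm were to run $\tilde K$ iterations without reaching~\eqref{eq:target}, then it has performed at most $K_{\calSin}$ successful iterations, i.e.\ taking $K=\tilde K$ in the displayed inequality forces $K_{\calSin}\ge\tfrac{\log_{\tau_2}(1/\tau_1)}{1+\log_{\tau_2}(1/\tau_1)}(\tilde K+1)-\tfrac{M}{1+\log_{\tau_2}(1/\tau_1)}$, hence
\begin{equation*}
	\tilde K + 1 \;\le\; \frac{1+\log_{\tau_2}(1/\tau_1)}{\log_{\tau_2}(1/\tau_1)}\,K_{\calSin}
	\;+\;\frac{M}{\log_{\tau_2}(1/\tau_1)} \;=\; \tilde K,
\end{equation*}
a contradiction. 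Therefore, on $\calE_{\tilde K}$, Algorithm~\ref{algo:inexact_RTR} produces a point satisfying~\eqref{eq:target} within $\tilde K$ iterations, which establishes the claim with probability at least $(1-p)^{\tilde K}$.

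The main obstacle is exactly this circular-looking interplay: the iteration count $\tilde K$ we are trying to bound is also what controls the size of the event $\calE_{\tilde K}$ on which Lemma~\ref{lemma:tr_radius_inexact} and Theorem~\ref{thm:successful_complexity_inexact} apply. It is resolved by conditioning once and for all on $\calE_{\tilde K}$ and observing that at most $\tilde K$ oracle calls are ever relevant, so the failure probability is governed by $(1-p)^{\tilde K}$ rather than by the unknown actual run length; everything else follows verbatim from the exact analysis with $\Deltaminin,\cDeltain$ replacing $\Deltamin,\cDelta$.
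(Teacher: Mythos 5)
Your proposal is correct and follows essentially the same route as the paper: it combines the inexact analogue of Lemma~\ref{le:unsuccessful_complexity_exact} (with $\Deltaminin$, $\cDeltain$ in place of $\Deltamin$, $\cDelta$) with the successful-iteration bound $K_{\calSin}$ of Theorem~\ref{thm:successful_complexity_inexact}, tracking the MEO failure probability over at most $\tilde K$ calls to get $(1-p)^{\tilde K}$. Your explicit conditioning on the event $\calE_{\tilde K}$ and the coarsening to $\ualpha,\ubeta,\ugamma$ simply make precise what the paper's shorter proof leaves implicit.
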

\begin{proof}
	Let $K \in \N$ be an iteration index satisfying the same assumptions than in the proof of 
	Theorem~\ref{thm:successful_complexity_inexact}, and let $\calSin$ be defined according to 
	$K$ as in that proof.
	By the same argument as in Lemma~\ref{le:unsuccessful_complexity_exact}, 
	we have that
	\begin{eqnarray*}
	\label{eq:succtotalinex}
		|\calSin|
		&\ge
		&\frac{\log_{\tau_2}(1/\tau_1)}{1+\log_{\tau_2}(1/\tau_1)}(K+1) \nonumber \\
		&
		&- \frac{1}{1+\log_{\tau_2}(1/\tau_1)}
		\max\left[0,
		\log_{\tau_2}\left(\dfrac{1}{\cDeltain}\right),
		\log_{\tau_2}\left(\dfrac{\Delta_0}{\cDeltain\alpha^{\tfrac{1}{2}}}\right),
		\log_{\tau_2}\left(\dfrac{\Delta_0}{\cDeltain\alpha^{\tfrac{2}{3}}}\right),
		\log_{\tau_2}\left(\dfrac{\Delta_0}{\cDeltain \beta}\right), 
		\right. \\
		&
		&\left.
		\log_{\tau_2}\left(\dfrac{\Delta_0}{\cDeltain \gamma}\right)
		\right].
	\end{eqnarray*}
	Combining this result with Theorem~\ref{thm:complexity_inexact} gives the desired 
	bound, that holds with probability at least $(1-p)^{\tilde{K}}$ if Algorithm~\ref{algo:MEO} 
	can be called at every iteration, and $(1-p)^{K_{\calSin}}$ if it is called only 
	on successful iterations.
\end{proof}

\begin{remark}
\label{rem:probainex}
	While implementing Algorithm~\ref{algo:inexact_RTR}, one may avoid repeated calls to 
	the MEO if a negative curvature direction was found at iteration $k$ and the 
	iteration is unsuccessful~\cite[Implementation strategy 4.1]{curtis2021trust}. Such an approach 
	can improve the probability bounds in Theorems~\ref{thm:successful_complexity_inexact} 
	and~\ref{thm:complexity_inexact} to $(1-p)^{K_{\calSin_2}}$, where $K_{\calSin_2}$ is a bound on 
	the number of iterations in $\calSin_2$ as defined in the proof of the theorem. In this paper, 
	we focus on the improvement brought by the strict saddle properties of $f$, and therefore use a 
	simpler, albeit suboptimal bound on the probability.
\end{remark}

Since Algorithm~\ref{algo:inexact_RTR} relies on an iterative procedure to solve the subproblems, we also provide 
a bound on the number of Hessian-vector products required to reach an approximate stationary point. 
The bound is obtained by accounting for both the inner iterations of tCG 
and the cost of the MEO. The former is bounded by $\kmax$ in the 
algorithm, while the latter follows directly from Assumption~\ref{assu:iter_meo}. Combining these 
properties with the result of Theorem~\ref{thm:complexity_inexact}, we obtain the following result.

\begin{corollary}%[Complexity of Algorithm~\ref{algo:inexact_RTR}]
\label{coro:Hvinex}
	Under the assumptions of Theorem~\ref{thm:complexity_inexact}, the total number 
	of Hessian-vector products performed by Algorithm~\ref{algo:inexact_RTR} and its 
	subroutines before producing an iterate satisfying~\eqref{eq:target} is at most
	\begin{small}
	\begin{eqnarray}
	\label{eq:Hvinex}
		\max\!\left(\kmax,\Nmeo\right)\!\tilde{K}  = \mathcal{O}\left(
		 \max\!\left(n,\ln\left(\max\!\left(\varepsilong^{-2}\ugamma^{-1/2},
		 \varepsilong^{-1}\ugamma^{-1/2},\ugamma^{-3/2}\right)\!\right)\ugamma^{-1/2},
		 \ln(n)\ubeta^{-1/2}\right)\!
		 \right. \nonumber \\
		 \left.
		 \times   
		 \left[
		 \min\!\left(\ualpha^2,\ualpha^{4/3}\ubeta,\ualpha^{4/3}\ugamma,\ubeta^3,
		\ubeta\,\ugamma^2,\ubeta^2\ugamma,\ugamma^3,
		\ugamma\udelta^2 \right)^{-1}
		 +
		 \log_2\log_2\left(\frac{\ugamma^2}{\varepsilon_g}\right)
		 \right]
		\right)
	\end{eqnarray}
	\end{small}
	with probability $(1-p)^{\tilde{K}}$, where $\tilde K$ is defined in Theorem~\ref{thm:complexity_inexact}.
\end{corollary}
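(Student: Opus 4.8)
The plan is to bound the number of Hessian-vector products incurred in a single iteration of Algorithm~\ref{algo:inexact_RTR} and then multiply by the iteration bound $\tilde K$ of Theorem~\ref{thm:complexity_inexact}. Within one iteration, Algorithm~\ref{algo:tCG} performs exactly one Hessian-vector product per pass of its \texttt{while} loop, and its counter $j$ is capped at $\kmax$, so tCG costs at most $\kmax$ products; if Algorithm~\ref{algo:MEO} is additionally invoked, it costs at most $\Nmeo$ products by~\aref{assu:iter_meo}. Hence a single outer iteration costs at most $\kmax+\Nmeo\le 2\max(\kmax,\Nmeo)$ Hessian-vector products, and the total cost is at most $2\max(\kmax,\Nmeo)\,\tilde K$, with the factor $2$ absorbed into $\mathcal{O}(\cdot)$. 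The probability bound $(1-p)^{\tilde K}$ is inherited directly from Theorem~\ref{thm:complexity_inexact}, since the only randomness lies in the MEO calls and is already accounted for there.

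Next I would substitute the explicit expressions for the three quantities. For $\tilde K$, Theorem~\ref{thm:complexity_inexact} writes it as a constant multiple of $K_{\calSin}$ plus the logarithmic trust-region-radius terms; the latter are dominated by (a constant times) the polynomial part of $K_{\calSin}$, so $\tilde K = \mathcal{O}\!\left(\min(\ualpha^2,\dots,\ugamma\udelta^2)^{-1}+\log_2\log_2(\ugamma^2/\varepsilong)\right)$, which is exactly the bracketed factor in~\eqref{eq:Hvinex}. For $\kmax$, recall $\lmax=\cH=\kappa_H$ is a constant; from~\eqref{eq:kmax} one has $\kmax=\min(n,\tfrac12\sqrt{\lmax/\gamma}\,\ln(\cdots))$, and bounding $\sqrt{\lmax/\gamma}\le\sqrt{\lmax}\,\ugamma^{-1/2}$ while folding the prefactor $\tfrac{2\sqrt{\lmax}}{\zeta\sqrt\gamma}$ inside the logarithm (again controlled through $\ugamma$) yields $\kmax=\mathcal{O}(\min(n,\ln(\max(\varepsilong^{-2}\ugamma^{-1/2},\varepsilong^{-1}\ugamma^{-1/2},\ugamma^{-3/2}))\,\ugamma^{-1/2}))$. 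Similarly, treating $p$ and $\kappa_H$ as fixed constants, \eqref{eq:iter_meo} gives $\Nmeo=\mathcal{O}(\min(n,\ln(n)\,\ubeta^{-1/2}))$. Taking the maximum of the two bounds, and recalling that each quantity is in any case at most $n$, produces the first factor of~\eqref{eq:Hvinex}.

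Multiplying the two estimates and retaining the probability $(1-p)^{\tilde K}$ then gives the stated bound. I expect the main obstacle to be purely the bookkeeping in the second step: one must check that the $\varepsilong$-dependence buried inside $\kmax$ (through the term $\max(\varepsilong^{-2},\varepsilong^{-1},\lmax/\gamma)$ under a logarithm) and the separate $\log_2\log_2(\ugamma^2/\varepsilong)$ factor coming from $\tilde K$ both appear in~\eqref{eq:Hvinex} in precisely the stated form, and that all problem-independent constants ($\zeta$, $p$, $\eta_1$, $\tau_1$, $\tau_2$, $\cR$, $\hat{L}_H$, $\cquadin$, $\cDeltain$, $\cH$) together with $f(x_0)-f^*$ are legitimately swept into the $\mathcal{O}(\cdot)$; no genuinely new estimate beyond the per-iteration count and Theorem~\ref{thm:complexity_inexact} is required.
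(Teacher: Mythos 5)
Your proposal is correct and follows essentially the same route as the paper, which only sketches the argument: per-iteration cost bounded via $\kmax$ (tCG) and $\Nmeo$ (MEO, by A\ref{assu:iter_meo}), multiplied by the iteration bound of Theorem~\ref{thm:complexity_inexact}, with all constants and the parameter substitutions ($\gamma\ge\ugamma$, $\beta\ge\ubeta$, dominance of the logarithmic radius terms) absorbed into the $\mathcal{O}(\cdot)$. Your remark that a single iteration may invoke both subroutines, so the cost is $\kmax+\Nmeo\le 2\max(\kmax,\Nmeo)$ with the factor $2$ swallowed by the $\mathcal{O}(\cdot)$, is a slightly more careful reading than the paper's stated left-hand side, but harmless.
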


As a final comment, we note that the operation complexity bound~\eqref{eq:Hvinex} 
exhibits an additional logarithmic factor in $\varepsilon_g$ compared to the iteration 
complexity bound of Theorem~\ref{thm:complexity_inexact}, but no additional dependency 
on $\varepsilon_H$, unlike in the case of general nonconvex functions. Overall, the 
complexity guarantees of inexact variants also improve thanks to the strict saddle 
property.

%%%%%%%%%%%%%%%%%%%%%%%%%%%%%%%%%%%%%%%%%%%%%%%%%%%%%%%%%%%%%%%%%%%%%%%%%%%%%%%%%%%%%%%%%
\section{Conclusions}
\label{sec:conc}
%%%%%%%%%%%%%%%%%%%%%%%%%%%%%%%%%%%%%%%%%%%%%%%%%%%%%%%%%%%%%%%%%%%%%%%%%%%%%%%%%%%%%%%%%

We show that worst-case complexity guarantees of Riemannian trust-region
algorithms on nonconvex functions improve significantly when the function satisfies a
strict saddle property. The guarantees we provide only depend logarithmically on the 
prescribed optimality tolerances, and, as such, are a better reflection of how 
problem-dependent quantities affect the performance. In particular, an algorithm with 
exact subproblem minimization does not require any modification from its standard 
version in order to benefit from these improved guarantees. Our analysis relies on the local quadratic convergence of Newton's method, and can be adapted to 
inexact subproblem minimizations by incorporating knowledge of the strict saddle constants in 
the problem. Although those parameters are known for a variety of problems, adaptive 
schemes have been proposed to estimate them as the algorithm 
unfolds~\citep{oneill2023linesearch}. Investigating the numerical performance of these 
algorithms, along with their multiple possibilities for implementation, will be the 
subject of future work. Extending our results to a broader class of strict saddle functions which includes non-isolated minimizers would also be valuable.

\paragraph{Acknowledgments} The authors are grateful to Coralia Cartis for useful discussions
regarding local convergence of trust-region methods and for sharing reference~\citep{shek2015master}.
This research was partially funded by the Agence Nationale de
la Recherche through program ANR-19-P3IA-0001 (PRAIRIE 3IA Institute).

\bibliographystyle{apalike}
 \bibliography{saddle}
\end{document}